\newtheorem{defn}{Definition}[section]
\newtheorem{prop}[defn]{Proposition}
\newtheorem{thrm}[defn]{Theorem}
\newtheorem{lem}[defn]{Lemma}
\newtheorem{ass}[defn]{Assumption}
\newtheorem{rem}[defn]{Remark}
\numberwithin{equation}{section}
\newcommand{\grad}{\nabla}
\newcommand{\Div}{\nabla \cdot}
\DeclareMathOperator{\tr}{tr}
\newcommand{\ceil}[2]{\lceil {#1}/{#2} \rceil}
\newcommand{\R}{\mathbb{R}}
\newcommand{\Leb}{\mathcal{L}}
\newcommand{\N}{\mathbb{N}}
\newcommand{\Prob}{{\mathscr{P}}(\Omega)}
\newcommand{\Time}{\infty}
\newcommand{\utn}{u_{\tau}^n}
\newcommand{\utnn}{u_{\tau}^{n-1}}
\newcommand{\ut}{u_\tau}
\newcommand{\IOx}[1]{\int_\Omega #1 \,\mathrm{d}x}
\newcommand{\IOxtime}[1]{\int_0^\infty \int_\Omega #1 \,\mathrm{d}x\,\mathrm{d}t}
\newcommand{\dS}{\,\mathrm{d}\sigma}
\newcommand{\W}{\mathbf{W}_2}
\newcommand{\flowS}{\mathsf{S}}
\renewcommand{\phi}{\varphi}
\newcommand{\eps}{\varepsilon}
\newcommand{\Ftilde}{\widetilde{F}}
\newcommand{\Non}{\mathcal{N}}
\newcommand{\dff}{\mathrm{D}}
\newcommand{\Id}{\mathds{1}}
\newcommand{\trc}{\operatorname{tr}}
\newcommand{\dd}{\,\mathrm{d}}
\newcommand{\auxil}{\mathfrak{A}}
\newcommand{\ban}{\mathrm{X}}
\newcommand{\dg}{\mathbf{g}}
\newcommand{\ent}{\mathcal{E}}
\newcommand{\pot}{\mathcal{V}}
\newcommand{\ce}{\mathsf{e}}
\newcommand{\id}{\operatorname{id}}
\newcommand{\tT}{\mathrm{T}}
\renewcommand{\tilde}{\widetilde}
\renewcommand{\bar}{\overline}
\begin{document}

\begin{abstract}
We prove the global-in-time existence of nonnegative weak solutions to a class of fourth order partial differential equations on a convex bounded domain in arbitrary spatial dimensions. Our proof relies on the formal gradient flow structure of the equation with respect to the $L^2$-Wasserstein distance on the space of probability measures. We construct a weak solution by approximation via the time-discrete minimizing movement scheme; necessary compactness estimates are derived by entropy-dissipation methods. Our theory essentially comprises the thin film and Derrida-Lebowitz-Speer-Spohn equations.
\end{abstract}

\title[Existence for fourth order equations with Wasserstein gradient structure]{Existence of weak solutions to a class of fourth order partial differential equations with Wasserstein gradient structure}
\author[Daniel Loibl]{Daniel Loibl}
\address{Zentrum f\"ur Mathematik \\ Technische Universit\"at M\"unchen \\ 85747 Garching, Germany}
\email{daniel.loibl@gmail.com}
\author[Daniel Matthes]{Daniel Matthes}
\address{Zentrum f\"ur Mathematik \\ Technische Universit\"at M\"unchen \\ 85747 Garching, Germany}
\email{matthes@ma.tum.de}
\author[Jonathan Zinsl]{Jonathan Zinsl}
\address{Zentrum f\"ur Mathematik \\ Technische Universit\"at M\"unchen \\ 85747 Garching, Germany}
\email{zinsl@ma.tum.de}
\keywords{Fourth-order equations; gradient flow; Wasserstein distance; weak solution; minimizing movement scheme}
\date{\today}
\subjclass[2010]{Primary: 35K30; Secondary: 35A15, 35D30.}

\maketitle

\section{Introduction}\label{sec:intro}

\subsection{Main results}\label{subsec:main}

This work is concerned with nonnegative solutions $u:[0,\infty)\times\Omega\to \R_+$ to the partial differential equation
\begin{align}
\partial_t u(t,x)&=\Div u(t,x) \grad \left[F_z(x, u(t,x), \grad u(t,x)) - \Div F_p(x, u(t,x),\grad u(t,x))\right]\label{PDE00}
\end{align}
for $(t,x)\in(0,\Time) \times \Omega$, where $\Omega\subset\R^d$ $(d\ge 1)$ is a bounded and convex domain with smooth boundary $\partial\Omega$ and exterior unit normal vector field $\nu$. Additionally, the sought-for solution $u$ is subject to the no-flux and homogeneous Neumann boundary conditions
\begin{align}
u(t,x)\partial_\nu \left[F_z(x, u(t,x), \grad u(t,x)) - \Div F_p(x, u(t,x),\grad u(t,x))\right]  = 0 &= \partial_\nu u(t,x)  \label{PDEb2}
\end{align}
for $t>0$ and $x\in \partial\Omega$, and to the initial condition 
\begin{align}
u(0,\cdot) = u_0\in L^1(\Omega), ~u_0 \geq 0 \text{ and }  \IOx{ u_0(x)} =1.\label{eq:ic}
\end{align}
For the nonlinearity $F$ -- to which we refer as \emph{Lagrangian} -- we assume
\begin{ass}[Conditions for $F$]
\label{ass}
$F\in C^2(\overline\Omega \times \R_{+} \times \R^d)$, with:
\begin{enumerate}[(i)]	
	\item  \label{ass1}
		{\emph{Radial symmetry}: There is some $\Ftilde:\overline{\Omega}\times \R_+\times\R_+\to\R$ with $\partial_r\Ftilde(x,z,r)\ge 0$ such that
			$F(x,z,p) = \Ftilde(x,z,|p|) \text{ for all $p\in\R^d$}$.} 			
	\item \label{ass2}
		{\emph{Convexity}: There exists $\gamma>0$ such that
\begin{align*}
\dff_{(x,z,p)}^2 F(x,z,p)[(\xi,\zeta,\pi),(\xi,\zeta,\pi)]&\ge \gamma | \pi |^2
\end{align*} $  \text{ for all  } (\xi,\zeta,\pi) \in \R^d \times \R \times \R^d.$ and all $(x,z,p)\in \overline\Omega \times \R_{+} \times \R^d$.}
	\item  \label{ass3}
		{\emph{Bounds}: There are constants $C\ge c >0$ such that
		$$c|p|^2 \le F(x,z,p) \le C(|p|^2+1)$$
		 for every $x \in \overline\Omega$, $z \in \R_{+}$, $p \in \R^d$.
		Furthermore, there exists $D>0$ such that
		$$|F_x(x,z,p)|, ~\tr(F_{xx}(x,z,p)),~ z |F_z(x,z,p)|,~ |F_p(x,z,p)|^2 \le D(|p|^2+1).$$}
\end{enumerate}
\end{ass}
Our main result is concerned with the existence of solutions to \eqref{PDE00}--\eqref{eq:ic}:
\begin{thrm}[Existence of weak solutions]\label{thm:existF}
Assume that $F$ satisfies Assumption \ref{ass} and let a nonnegative initial datum $u_0\in H^1(\Omega)$ with $\|u_0\|_{L^1}=1$ be given. Then, there exists a nonnegative \emph{weak solution} $u:[0,\infty)\times\Omega\to[0,\infty]$ to \eqref{PDE00}--\eqref{eq:ic} in the following sense: For all $\phi \in C^\infty(\overline\Omega)$ with $\partial_\nu \phi = 0$ on $\partial \Omega$ and all $\eta \in C^{\infty}_c((0,\Time))$ with $\eta(0)=0$, one has
\begin{align}
\label{weakform}
\IOxtime{ u(t,x) \phi(x) \partial_t \eta(t) } = \IOxtime{ \Non(x,u(t,x),\phi(x))\eta(t)} ,
\end{align}
where
$$\Non(x,\rho,\psi) := F_x(x,\rho,\grad\rho) \cdot \grad \psi + (F (x,\rho,\grad\rho)- \rho F_z(x,\rho,\grad\rho))\Delta \psi + F_p(x,\rho,\grad\rho) \cdot (\grad \rho \Delta \psi + \rho \grad\Delta\psi +  \grad^2\psi \grad \rho).$$
Particularly, $u\in L^\infty([0,T];H^1(\Omega))\cap L^2([0,T];H^2(\Omega))$ for each $T>0$, and $u(t,\cdot)$ is a probability density on $\Omega$ at each $t\ge 0$. Furthermore, the map $t\mapsto \int_\Omega F(x,u(t,x),\nabla u(t,x))\dd x$ is almost everywhere equal to a non-decreasing function.
\end{thrm}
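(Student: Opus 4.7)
My plan is to construct $u$ as the narrow limit as $\tau \to 0$ of the JKO minimizing movement scheme for the functional $\UU(v) := \int_\Omega F(x,v,\nabla v)\dd x$ on $(\Prob, \W)$: define iterates $\utn \in \arg\min_{v \in \Prob} \bigl\{ \tfrac{1}{2\tau}\W(v,\utnn)^2 + \UU(v) \bigr\}$ starting from $u_\tau^0 := u_0$. Existence of each minimizer follows by the direct method: the quadratic lower bound $c|p|^2 \le F$ together with the unit mass constraint yields $\Ho$-coercivity of $\UU$; radial symmetry combined with the uniform convexity in $p$ ensured by Assumption \ref{ass} delivers weak $\Ho$-lower semicontinuity of $\UU$; and the Wasserstein term is narrowly lower semicontinuous. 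Telescoping minimality produces the classical bound $\sum_n \tfrac{1}{2\tau}\W(\utn,\utnn)^2 + \UU(u_\tau^N) \le \UU(u_0)$, so the piecewise constant interpolation $\bar u_\tau$ is uniformly bounded in $L^\infty([0,T];\Ho)$ and $\tau$-Hölder continuous in $\W$; the latter translates to continuity in a dual norm of $\Ho$.

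\textbf{Higher regularity via flow interchange.} To establish the $L^2([0,T];H^2(\Omega))$-bound I would apply the flow interchange lemma of Matthes--McCann--Savar\'e. Pick an auxiliary functional $\VV$ with an explicit Wasserstein gradient flow $\mathsf{S}^t_\VV$, for instance the Boltzmann entropy $\int v\log v$, whose gradient flow is the heat semigroup on the convex Neumann domain $\Omega$. From the minimality of $\utn$ one has
$$\UU(\utn) - \UU(\mathsf{S}^\tau_\VV \utn) \le \tfrac{1}{2\tau}\bigl[\W(\mathsf{S}^\tau_\VV \utn, \utnn)^2 - \W(\utn, \utnn)^2\bigr].$$
Combining the Evolution Variational Inequality for $\mathsf{S}^t_\VV$ on the right-hand side with an explicit dissipation computation of $\UU$ along $\mathsf{S}^t_\VV$ on the left-hand side --- which after integration by parts against the Euler--Lagrange operator of $\UU$ produces a quadratic form controlling $\|\utn\|_{H^2}^2$ --- gives, after summation in $n$, the desired bound on $\bar u_\tau$.

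\textbf{Discrete Euler--Lagrange and compactness.} For $\phi \in C^\infty(\overline\Omega)$ with $\partial_\nu\phi=0$, the flow $T_s := \id + s\nabla\phi$ maps $\overline\Omega$ into itself for small $|s|$ thanks to the Neumann condition and convexity of $\Omega$. Pushing $\utn$ forward, inserting into the minimality inequality and differentiating at $s=0$ gives, after integration by parts on the variation of $\UU$, the discrete weak form
$$\int_\Omega \tfrac{\utn-\utnn}{\tau}\phi \dd x = \int_\Omega \Non(x,\utn,\phi)\dd x + R_\tau^n,$$
with $|R_\tau^n| \lesssim \tfrac{1}{\tau}\W(\utn,\utnn)^2$ estimated via Brenier's theorem. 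Pairing with $\eta \in C^\infty_c((0,\Time))$ and summing over $n$ yields the time-discrete analog of \eqref{weakform}. An Aubin--Lions argument applied to the $L^\infty H^1\cap L^2 H^2$-bound combined with the $\W$-equicontinuity extracts a subsequence $\bar u_\tau \to u$ converging strongly in $L^2([0,T];\Ho)$, weakly in $L^2([0,T];H^2(\Omega))$, and a.e. on $[0,T]\times\Omega$; these modes suffice to pass to the limit in each term of $\Non$. Monotonicity of $t \mapsto \UU(\bar u_\tau(t))$ transports to the limit by lower semicontinuity.

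\textbf{Main obstacle.} The $L^2 H^2$-estimate is the decisive step: the quadratic gradient term $F_p(x,u,\nabla u) \cdot \nabla u\,\Delta\phi$ appearing in $\Non$ requires strong $H^1$-convergence of $\bar u_\tau \to u$ for the limit passage, which in turn rests on precisely that $H^2$ bound. Designing an auxiliary functional $\VV$ whose flow interchange against $\UU$ yields such a bound under only the structural Assumption \ref{ass} --- rather than for a specific equation such as the thin-film or Derrida--Lebowitz--Speer--Spohn equation --- is the core technical difficulty.
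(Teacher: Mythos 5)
Your proposal follows essentially the same route as the paper: the JKO minimizing-movement scheme for $\Phi$, flow interchange against Boltzmann entropy to obtain the $L^2([0,T];H^2(\Omega))$ bound, a push-forward Euler--Lagrange analysis with a $\W^2$-sized remainder, Aubin--Lions compactness in the metric setting, and lower semicontinuity to carry the energy monotonicity to the limit.

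There is, however, one step that would fail as written. You perturb via $T_s := \id + s\nabla\phi$ and assert that it maps $\overline\Omega$ into itself for small $|s|$ ``thanks to the Neumann condition and convexity of $\Omega$.'' This is not so: for $x\in\partial\Omega$ the Neumann condition makes $\nabla\phi(x)$ \emph{tangent} to $\partial\Omega$, so $x + s\nabla\phi(x)$ moves along a straight tangent line and exits any strictly convex domain for every $s\ne0$ (take $\Omega=B_1(0)$, $x=(1,0)$, $\nabla\phi(x)=(0,a)$: then $|x+s\nabla\phi(x)|^2 = 1+s^2a^2>1$). Consequently the push-forward $T_s{}_\#\utn$ is not even an element of $\Prob$ and cannot serve as a competitor in the minimality inequality. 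The paper instead perturbs along the solution flow $X_s$ of the ODE $\dot y = \nabla\phi(y)$, which genuinely preserves $\overline\Omega$ because the tangency of $\nabla\phi$ confines trajectories starting on $\partial\Omega$ to $\partial\Omega$; this is exactly why the Neumann condition on $\phi$ is needed. Beyond that, the step you flag as the ``core technical difficulty'' --- extracting the $H^2$ bound from the flow interchange under Assumption~\ref{ass} alone --- is carried out in Proposition~\ref{prop:addreg}: one integrates the dissipation $-\tfrac{\dd}{\dd s}\Phi(\flowS^\ent_s u)$ by parts, invokes the joint convexity Assumption~\ref{ass}(ii) to bound the resulting quadratic form from below by $\gamma\|\nabla^2 u\|^2$ minus lower-order terms controlled by Assumption~\ref{ass}(iii), and discards the boundary term via Lemma~\ref{lem:boundary}, where the radial-symmetry hypothesis~\ref{ass}(i) and the convexity of $\Omega$ enter decisively.
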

The second part of this paper is devoted to a special case of equation \eqref{PDE00} where the Lagrangian $F$ does \emph{not} satisfy the conditions from Assumption \ref{ass}: Specifically, we let 
\begin{align}F(x,z,p):=\frac12 f'(z)^2 |p|^2,\label{eq:speziell}\end{align}
for a map $f:\R_+\to\R$ subject to
\begin{ass}[Conditions for $f$]\label{assf}~
\begin{enumerate}[(i)]
\item $f\in C^3((0,\infty))$, $f(0)=0$, $f''(z)<0$ for all $z>0$ and there exist $C>0$ and $\alpha\in \left(\left[\frac12-\frac1{d}\right]_+,1\right)$ such that for all $z>0$: $f'(z)\ge Cz^{\alpha-1}$. 
\item The limit $\lim\limits_{z\searrow 0}zf'(z)$ exists.
\item There exists $\bar\delta>0$ such that $\frac{f'''(z)f'(z)}{f''(z)^2}\ge \bar\delta+1-\frac{d}{2}+\frac12 \sqrt{d^2+8d}$ for all $z>0$.
\end{enumerate}
\end{ass}
A typical example satisfying Assumption \ref{assf} is the square root $f(z)=\sqrt{z}$ --- see also Remark \ref{rem:power} below. Note that since $f'$ is assumed to be unbounded at zero, especially the convexity assumption \eqref{ass2} is not met by $F$. Nevertheless, we have
\begin{thrm}[Existence of weak solutions: non-convex case]\label{thm:existf}
Assume that $F$ is as in \eqref{eq:speziell}, and that $f$ satisfies Assumption \ref{assf}. Let moreover a nonnegative initial datum $u_0$ with $\|u_0\|_{L^1}=1$ and $f(u_0)\in H^1(\Omega)$ be given. Then, there exists a nonnegative \emph{weak solution} $u:[0,\infty)\times\Omega\to[0,\infty]$ to \eqref{PDE00}--\eqref{eq:ic}: For all $\phi \in C^\infty(\overline\Omega)$ with $\partial_\nu \phi = 0$ on $\partial \Omega$ and all $\eta \in C^{\infty}_c((0,\Time))$ with $\eta(0)=0$, one has
\begin{align*}
\IOxtime{ u(t,x) \phi(x) \partial_t \eta(t) } = \IOxtime{ \Non_f(u(t,x),\phi(x))\eta(t)} ,
\end{align*}
where 
$$\Non_f(\rho,\psi):=\Delta f(\rho)\nabla f(\rho)\cdot \nabla \psi+\rho f'(\rho)\Delta f(\rho)\Delta\psi.$$
Particularly, one has $u\in L^\infty([0,T];L^p(\Omega))$ for some $p>1$ and $f(u)\in L^\infty([0,T];H^1(\Omega))\cap L^2([0,T];H^2(\Omega))$ for each $T>0$; and $u(t,\cdot)$ is a probability density on $\Omega$ at each $t\ge 0$. Furthermore, the map $t\mapsto \int_\Omega \frac12|\nabla f(u)|^2\dd x$ is almost everywhere equal to a non-decreasing function.
\end{thrm}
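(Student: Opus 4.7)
The plan is to derive Theorem~\ref{thm:existf} from Theorem~\ref{thm:existF} by approximation: I would introduce a family of Lagrangians $F_\varepsilon$ that satisfy Assumption~\ref{ass}, invoke the existence result to obtain weak solutions $u_\varepsilon$ to the regularised equations, and then pass to the limit $\varepsilon\searrow 0$ using uniform estimates coming both from the Wasserstein energy--dissipation structure and from the auxiliary integral inequality encoded in Assumption~\ref{assf}(iii).

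For the regularisation, I would choose $f_\varepsilon\in C^3((0,\infty))$ with $f_\varepsilon'$ smooth and globally bounded away from $0$ and from $\infty$, such that $f_\varepsilon^{(k)}\to f^{(k)}$ locally uniformly on $(0,\infty)$ for $k=0,1,2,3$, and such that the quotient $f_\varepsilon'''\,f_\varepsilon'/(f_\varepsilon'')^2$ stays bounded below by the constant appearing in Assumption~\ref{assf}(iii) (up to replacing $\bar\delta$ by, say, $\bar\delta/2$). Setting
\[
F_\varepsilon(x,z,p) := \tfrac{1}{2} f_\varepsilon'(z)^2|p|^2 + \varepsilon|p|^2,
\]
Assumption~\ref{ass} is met with $\gamma=2\varepsilon$, so Theorem~\ref{thm:existF} yields a weak solution $u_\varepsilon$ (after regularising the initial datum if necessary) for which the energy $t\mapsto \int_\Omega F_\varepsilon(x,u_\varepsilon,\nabla u_\varepsilon)\,\mathrm{d}x$ is monotone.

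The uniform a priori estimates I would then collect are: monotonicity of the energy gives $\nabla f_\varepsilon(u_\varepsilon)\in L^\infty_t L^2_x$ uniformly; the pointwise lower bound $f'(z)\ge C z^{\alpha-1}$ combined with the Sobolev embedding and $\alpha>[\tfrac12-\tfrac1d]_+$ upgrades mass conservation plus the $L^2$ bound on $\nabla u_\varepsilon^\alpha$ to a uniform $L^\infty_t L^p_x$ bound on $u_\varepsilon$ for some $p>1$; and, crucially, a uniform $L^2_t H^2_x$ bound on $f_\varepsilon(u_\varepsilon)$. The latter I would derive by transplanting the Jüngel--Matthes auxiliary-functional strategy to the discrete-in-time minimising movement scheme underlying Theorem~\ref{thm:existF}: condition~(iii) of Assumption~\ref{assf} is exactly what makes a certain pointwise integrand non-negative after two integrations by parts, while the convexity of $\Omega$ allows one to discard a boundary term arising from the Hessian of $f_\varepsilon(u_\varepsilon)$. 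A standard manipulation of the weak formulation then gives time-regularity of $u_\varepsilon$ in a negative Sobolev norm.

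Armed with these estimates, Aubin--Lions produces a subsequence along which $f_\varepsilon(u_\varepsilon)\to g$ strongly in $L^2_t H^1_x$; strict monotonicity of $f$ (guaranteed by $f''<0$) together with the locally uniform convergence $f_\varepsilon\to f$ identifies $g=f(u)$ for some $u\ge 0$ and yields $u_\varepsilon\to u$ a.e., which via the $L^p$ bound upgrades to strong $L^q_{t,x}$ convergence of $u_\varepsilon$ itself. Passing to the limit in the weak formulation is then standard: strong $L^2_t H^1_x$ convergence of $f_\varepsilon(u_\varepsilon)$ paired with weak $L^2_{t,x}$ convergence of $\Delta f_\varepsilon(u_\varepsilon)$ handles the bilinear terms, while the existence of $\lim_{z\searrow 0} z f'(z)$ from Assumption~\ref{assf}(ii) allows identification of the limit of $u_\varepsilon f_\varepsilon'(u_\varepsilon)$ even on $\{u=0\}$. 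The main obstacle I anticipate is the uniform $L^2_t H^2_x$ estimate: the approximation $f_\varepsilon$ must be engineered so that condition~(iii) is preserved with an $\varepsilon$-independent margin, and the Jüngel--Matthes identity must be implemented at the level of the discrete scheme (rather than only formally on the limiting PDE) so that the bound transfers cleanly through the construction given by Theorem~\ref{thm:existF}; once this is in hand, the identification of the nonlinear terms and the upgrade to the claimed regularity proceed by standard strong--weak continuity arguments.
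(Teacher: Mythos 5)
Your plan is to deduce Theorem~\ref{thm:existf} from Theorem~\ref{thm:existF} by regularizing the Lagrangian to $F_\varepsilon(x,z,p)=\tfrac12 f_\varepsilon'(z)^2|p|^2+\varepsilon|p|^2$ and passing $\varepsilon\searrow 0$. This is not what the paper does --- the paper reruns the minimizing movement machinery directly for the non-convex energy $\Phi(u)=\tfrac12\int_\Omega|\nabla f(u)|^2\dd x$, proving existence of minimizers by passing to $w=f(u)$ (Proposition~\ref{prop:mmovf}), obtaining the $H^2$-estimate via a trace/traceless splitting of the Hessian together with Lemma~\ref{lem:traceless} (Proposition~\ref{prop:addregf}), and producing the discrete weak formulation with a $\beta$-regularized potential energy (Proposition~\ref{prop:dweakf}). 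But more importantly, your proposal has a genuine gap at the very first step, and it cannot be repaired within the stated framework.

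The issue is the joint convexity in Assumption~\ref{ass}\eqref{ass2}. For $F_\varepsilon(x,z,p)=\tfrac12 f_\varepsilon'(z)^2|p|^2+\varepsilon|p|^2$ one computes $F_{\varepsilon,zz}=\big((f_\varepsilon'')^2+f_\varepsilon'f_\varepsilon'''\big)|p|^2$, $F_{\varepsilon,zp}=2f_\varepsilon'f_\varepsilon''\,p$ and $F_{\varepsilon,pp}=(f_\varepsilon'(z)^2+2\varepsilon)\Id$. Demanding $\dff^2 F_\varepsilon\ge 2\varepsilon|\pi|^2$ and taking $\pi\parallel p$ in the Schur-complement test, the $\varepsilon$-contributions cancel exactly, leaving the requirement
\begin{align*}
\big((f_\varepsilon'')^2+f_\varepsilon'f_\varepsilon'''\big)\,f_\varepsilon'(z)^2\ \ge\ 4\,(f_\varepsilon')^2(f_\varepsilon'')^2,
\qquad\text{i.e.}\qquad
\frac{f_\varepsilon'f_\varepsilon'''}{(f_\varepsilon'')^2}\ \ge\ 3 .
\end{align*}
Adding $\varepsilon|p|^2$ does nothing here because the obstruction sits in the mixed $(z,p)$-derivative, not in $F_{pp}$ alone. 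Now observe that
\begin{align*}
1-\frac{d}{2}+\frac12\sqrt{d^2+8d}<3\qquad\text{for every }d\ge 1,
\end{align*}
so Assumption~\ref{assf}(iii) with a small $\bar\delta$ only guarantees $\frac{f'f'''}{(f'')^2}\ge$ (some constant $<3$), and by your requirement that $f_\varepsilon^{(k)}\to f^{(k)}$ locally uniformly, the ratio $\frac{f_\varepsilon'f_\varepsilon'''}{(f_\varepsilon'')^2}$ must approach $\frac{f'f'''}{(f'')^2}$, which can be strictly below $3$. (In the power case $f(z)=z^\alpha$ from Remark~\ref{rem:power}, the ratio equals $\frac{2-\alpha}{1-\alpha}$, which is $<3$ whenever $\alpha<\tfrac12$; such $\alpha$ are admissible for all $d$.) Hence your $F_\varepsilon$ does not satisfy Assumption~\ref{ass}\eqref{ass2}, and Theorem~\ref{thm:existF} cannot be invoked. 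This is not a cosmetic point: the joint convexity is what the paper uses to get lower semicontinuity of $\Phi$ (Proposition~\ref{prop:energyprop} via \cite[Thm.~10.16]{renardy2004}) and the $\gamma$-coercive dissipation estimate (Proposition~\ref{prop:addreg}). The paper's direct argument is designed precisely to replace joint convexity of $F$ in $(z,p)$ by convexity of $\tfrac12|\nabla w|^2$ in $w=f(u)$, combined with the algebraic inequality~\eqref{eq:Av} to recover the $H^2$-estimate from the more permissive hypothesis $\frac{f'f'''}{(f'')^2}\ge\bar\delta+1-\tfrac{d}{2}+\tfrac12\sqrt{d^2+8d}$.

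The remainder of your outline (uniform $L^\infty_tH^1_x$ and $L^2_tH^2_x$ bounds for $f_\varepsilon(u_\varepsilon)$, the $L^\infty_tL^p_x$ bound via Sobolev and $\alpha>[\tfrac12-\tfrac1d]_+$, Aubin--Lions, identification of $uf'(u)$ on $\{u=0\}$ via Assumption~\ref{assf}(ii)) is on the right track and mirrors the compactness and limit-passage steps of Propositions~\ref{prop:dweakf} and~\ref{prop:convergencef}, but they are built on a foundation that collapses once the convexity of $F_\varepsilon$ fails.
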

\subsection{Strategy of proof}\label{subsec:strat}
The cornerstone of the proofs of Theorems \ref{thm:existF} and \ref{thm:existf} is the variational structure of equation \eqref{PDE00}: Formally, it can be written as a gradient flow of the free energy functional $\Phi : \Prob \rightarrow \R \cup \{+\infty\}$ defined via
\begin{align}\label{eq:energy}
&\Phi(\mu) := \begin{cases} \IOx{F(x, u(x), \grad u(x))}, &\text{ if } \mu=u \cdot \Leb^d \text{ and } u \in H^1(\Omega),\\
+\infty,& \text{ otherwise}.\end{cases}
\end{align}
with respect to the $L^2$-Wasserstein distance $\W$ on the space of probability measures $\Prob$ on $\Omega$. Indeed, for sufficiently regular $F$ and $u$, one finds
\begin{align*}
\partial_t u&=\Div u\grad\frac{\delta \Phi(u)}{\delta u},
\end{align*}
where $\frac{\delta \Phi(u)}{\delta u}$ is the first variation of $\Phi$ in $L^2$ at $u$. Since $\Phi$ is finite on a subspace of absolutely continuous probability measures only, we shall from now on (by a slight abuse of notation) identify the measure $\mu$ with its Lebesgue density $u$ and also write $u\in \Prob$.

Note that the energy functional $\Phi$ is in general \emph{not} $\lambda$-convex along geodesics in $(\Prob,\W)$ \cite{carrillo2009}. Hence, the theory developed by Ambrosio, Gigli and Savaré \cite{savare2008} is not immediately applicable. Still, a continuous flow for \eqref{PDE00} can be defined as the time-continuous limit of the so-called \emph{minimizing movement scheme}, dating back to De Giorgi \cite{degiorgi1983}: 

Fix a step size $\tau>0$ and define a sequence $(u_\tau^n)_{n\in\N}$ recursively by
\begin{align}
u_\tau^0:=u_0,\quad u_\tau^{n}\in\underset{u\in \Prob}{\operatorname{argmin}}\left(\frac1{2\tau}\W^2(u,u_\tau^{n-1})+\Phi(u)\right) \quad(n\in\N).\label{eq:minmov}
\end{align}
With this sequence, we define the \emph{discrete solution} $u_\tau$ as the piecewise constant interpolant, i.e. for each $t\ge 0$, we set
\begin{align}
u_\tau(t,\cdot)=u_\tau^n\quad\text{for }n=\ceil{t}{\tau}.\label{eq:discsol}
\end{align}

The strategy of proof for Theorems \ref{thm:existF} and \ref{thm:existf} can now be summarized as follows: First, we prove that the scheme \eqref{eq:minmov}\&\eqref{eq:discsol} is well-defined, that is, the successive minimizers in \eqref{eq:minmov} exist. By an application of the flow interchange technique from \cite{matthes2009}, we then derive an additional regularity property for the time-discrete solution $u_\tau$. In the framework of Assumption \ref{ass}, the respective entropy-dissipation estimate formally amounts to
\begin{align}
-\frac{\dd}{\dd t}\int_\Omega u\log(u) \dd x&\ge \gamma\int_\Omega \|\grad^2 u\|^2\dd x-C.\label{eq:formal}
\end{align}
We consequently are in position to derive an approximate weak formulation satisfied by the time-discrete solution curves, and can pass to the continuous-time limit $\tau\searrow 0$ afterwards recovering the weak formulation of the original problem \eqref{weakform}.

\subsection{Examples and relation to the literature}\label{subsec:exlit}
The method used here is by now almost a standard technique to construct weak solutions to equations with a formal gradient flow structure. Since the seminal paper by Jordan, Kinderlehrer and Otto \cite{jko1998} on the variational structure of the Fokker-Planck equation, various second order equations (e.g. \cite{otto2001}), fourth order equations (e.g. \cite{giacomelli2001, gianazza2009, matthes2009, lisini2012}) and systems (e.g. \cite{laurencot2011, carrillo2012, blanchet2014, zinsl2012}) have been analysed using this strategy. As models for physical or biological processes, sensible solutions should admit nonnegative values only. Since there exists no general comparison principle for equations of fourth order, this is a nontrivial issue. Thus, considering the dynamics on a suitable space of nonnegative measures, nonnegativity can be directly obtained from existence --- and a formal Wasserstein gradient flow structure might be an indication for it. However, in more specialized situations, other methods could be used (see e.g. \cite{bleher1994, bertozzi1996, bertsch1998, dalpasso1998, juengel2000}).\\
Our framework comprises two important examples:
\begin{itemize}
\item The \emph{thin film equation}
\begin{align}
\partial_t u&=-\nabla\cdot u\nabla\Delta u,\label{eq:thinfilm}
\end{align}
modelling the spreading of a liquid film over a solid surface, can be interpreted as the evolution of the Wasserstein gradient flow of the \emph{Dirichlet energy} $\Phi(u)=\frac12 \|\nabla u\|_{L^2}^2$ \cite{matthes2009}, which fits into the framework of Assumption \ref{ass} for $F(x,z,p)=\frac12|p|^2$. Equation \eqref{eq:thinfilm} can also be considered in the more general form
\begin{align}
\partial_t u&=-\nabla\cdot m(u)\nabla\Delta u,\label{eq:thinfilmmob}
\end{align}
with a possibly nonlinear \emph{mobility function} $m:\R_+\to\R_+$. For the physically relevant cases, the existence of solutions to \eqref{eq:thinfilmmob} has been shown e.g. in \cite{bertsch1998, dalpasso1998}. Note that -- with respect to a modified version of the Wasserstein distance -- a formal gradient structure of \eqref{eq:thinfilmmob} may also be available \cite{lisini2012}. In the simplified framework of one spatial dimension, the long-time asymptotics of the classical thin film equation \eqref{eq:thinfilm} have been analysed in \cite{carrillo2002}, also with entropy-dissipation methods.
\item The \emph{Derrida-Lebowitz-Speer-Spohn equation} 
\begin{align}
\partial_t u=\nabla\cdot u\nabla\left(\frac{\Delta\sqrt{u}}{\sqrt{u}}\right),\label{eq:dlss}
\end{align}
arises in the description of interface fluctuations in the Toom model \cite{dlss1,dlss2} as well as in models for semiconductors (see e.g. \cite{juengel2008}). Often written in the form of the \emph{quantum drift diffusion equation}
\begin{align*}
\partial_t u=\nabla\cdot u\nabla\left(\frac{\Delta\sqrt{u}}{\sqrt{u}}+V\right),
\end{align*}
with a \emph{confinement potential} $V:\R^d\to\R$, existence and long-time behaviour of solutions were studied e.g. in \cite{bleher1994, juengel2000, juengel2008, gianazza2009, matthes2009}. In \cite{gianazza2009, matthes2009}, the  formal Wasserstein gradient flow structure of \eqref{eq:dlss} with respect to the \emph{Fisher information functional} $\Phi(u)=4\|\nabla \sqrt{u}\|_{L^2}^2$ is employed to construct weak solutions via the minimizing movement scheme. 
\end{itemize} 
Actually, one may consider the more general class of functionals $\Phi_\beta(u)=\int_\Omega u^\beta |\nabla u|^2\dd x$ for $\beta\le 0$, which are in general \emph{not} geodesically $\lambda$-convex with respect to the $L^2$-Wasserstein distance. Note, however, that in the range $\beta\in[-5,-4]$, displacement convexity can be proved, see \cite{carrillo2009}. Theorem \ref{thm:existf} presented here especially comprises the range $\beta\in [-1,0)$, by considering $f(z)=z^\alpha$ with $\alpha=1+\frac{\beta}{2}$, which is in accordance to Assumption \ref{assf}. Thus, this work provides a generalization of the existence results from \cite{matthes2009} for a wider class of energy functionals.

\subsection{Outline of the paper}\label{subsec:plan}
After a brief preliminary section, we prove the well-posedness of the minimizing movement scheme under Assumption \ref{ass} in Section \ref{sec:minmov}. After that, the additional regularity of the discrete solution is deduced in Section \ref{sec:addreg}, and a discrete weak formulation is derived in Section \ref{sec:discreteweak}. The proof of Theorem \ref{thm:existF} is then completed by passing to the continuous-time limit (Section \ref{sec:limit}). The extension of the strategy to the framwork of Assumption \ref{assf} will be sketched in Section \ref{sec:f}.


\section{Preliminaries}
In this section, we briefly summarize basic facts about gradient flows in the space of probability measures on the (bounded) set $\Omega$. For more details on optimal transport and gradient flows, we refer to the monographs by Villani \cite{villani2003} and Ambrosio \emph{et al.} \cite{savare2008}.

A sequence $(\mu_n)_{n\in\N}$ in $\Prob$ is said to \emph{converge narrowly} to some limit probability measure $\mu\in\Prob$ if for all continuous and bounded maps $f:\Omega\to\R$, one has
\begin{align*}
\lim_{n\to\infty}\int_\Omega f(x)\dd\mu_n(x)&=\int_\Omega f(x)\dd \mu(x).
\end{align*}
Note that if all measures are absolutely continuous and their respective densities converge weakly in $L^p(\Omega)$ for some $p\ge 1$, narrow convergence of $\mu_n$ to $\mu$ follows.

The space $\Prob$ can be endowed with the so-called \emph{$L^2$-Wasserstein distance} $\W$ defined as
\begin{align*}
\W(\mu_0,\mu_1) := \left( \inf_{\gamma \in \Gamma(\mu_0,\mu_1)} \int_{\Omega\times\Omega} |x-y|^2\, \mathrm{d\gamma}(x,y) \right)^{1/2},
\end{align*}
where $\Gamma(\mu_0,\mu_1)$ denotes the set of all transport plans from $\mu_0$ to $\mu_1$, i.e.
$$\Gamma(\mu_0,\mu_1) := \left\{ \gamma \in \mathscr{P}(\Omega \times \Omega) : \gamma \text{ has marginals } \mu_0 \text{ and } \mu_1 \right\}.$$ 
Since $\Omega$ is a bounded subset of $\R^d$, the infimum above is attained without further restrictions (e.g. finiteness of second moments) on the measures $\mu_0$ and $\mu_1$. Moreover, narrow convergence and convergence with respect to $\W$ are equivalent. Furthermore, if $\mu_0$ is absolutely continuous with respect to the Lebesgue measure, the minimizer for $\W$ is given in terms of the push-forward $\mu_1=T_\#\mu_0$ for a measurable map $T:\Omega\to\Omega$, and the Wasserstein distance reads
\begin{align}
\label{Wasser}
\W(\mu_0,\mu_1)^2 = \int_\Omega |x - T(x)|^2 \,\mathrm{d}\mu_0(x).
\end{align}
In this non-Euclidean framework, one can define a notion of convexity via the so-called \emph{displacement interpolation}: A functional $\auxil:\Prob\to\R\cup\{+\infty\}$ is called \emph{displacement $\lambda$-convex} for some $\lambda\in\R$ if for each $\mu_0,\mu_1\in\Prob$, there exists a constant-speed geodesic curve $\mu_s:[0,1]\to\Prob$ connecting $\mu_0$ and $\mu_1$ such that
\begin{align*}
\auxil(\mu_s)\le (1-s)\auxil(\mu_0)+s\auxil(\mu_1)-\frac{\lambda}{2}s(1-s)\W(\mu_0,\mu_1)^2.
\end{align*}
For displacement convex functionals, we shall use the following notion of \emph{gradient flow}:
\begin{defn}[$\kappa$-flows \cite{savare2008}]\label{thm:0flow}
Let $\auxil:\Prob\to\R\cup\{+\infty\}$ be proper, lower semicontinuous and $\kappa$-displacement convex w.r.t. $\W$ for some $\kappa\in\R$. A continuous semigroup $\flowS^{\auxil}$ on $(\Prob,\W)$ satisfying
the \emph{evolution variational estimate}
 \begin{align*}
   \frac{1}{2}\frac{\dd^+}{\dd s}\W^2(\flowS_{s}^{\auxil}(w),\tilde w)+\frac{\kappa}{2}\W^2(\flowS_{s}^{\auxil}(w),\tilde w)+\auxil(\flowS_{s}^{\auxil}(w))
   &\le\auxil(\tilde w)
\end{align*}
for arbitrary $w,\tilde w$ in the domain of $\auxil$ and for all $s\ge 0$, as well as the monotonicity condition
\begin{align*}
\auxil(\flowS_t^\auxil(w))\le \auxil(\flowS_s^\auxil(w))\quad\forall 0\le s\le t
\end{align*}
for all $w\in\Prob$,
is called \emph{$\kappa$-flow} or \emph{gradient flow} of $\auxil$.
\end{defn}
The cornerstone of the rigorous derivation of \eqref{eq:formal} is
\begin{thrm}[Flow interchange lemma {\cite[Thm. 3.2]{matthes2009}}]\label{thm:flowinterchange}
Let $\auxil$ be a proper, lower semicontinuous and displacement $\lambda$-convex functional on $(\Prob,\W)$ and assume that there exists a $\lambda$-flow $\flowS^\auxil$. Let furthermore $\Psi$ be another proper, lower semicontinuous functional on $(\Prob,\W)$ such that $\operatorname{Dom}(\Psi)\subset \operatorname{Dom}(\auxil)$. Assume that, for arbitrary $\tau>0$ and $\tilde w\in \Prob$, the functional $\frac1{2\tau}\W(\cdot,\tilde w)^2+\Psi$ possesses a minimizer $w$ on $\Prob$. Then, the following holds:
\begin{align*}
\auxil(w)+\tau \mathfrak{D}^\auxil\Psi(w)+\frac{\lambda}{2}\W^2(w,\tilde w)&\le \auxil(\tilde w).
\end{align*}
There, $\mathfrak{D}^\auxil\Psi(w)$ denotes the \emph{dissipation} of the functional $\Phi$ along the $\lambda$-flow $\flowS^{\auxil}$ of the functional $\auxil$, i.e.
\begin{align}
\mathfrak{D}^\auxil\Psi(w):=\limsup_{s\searrow 0}\frac{\Psi(w)-\Psi(\flowS_{s}^{\auxil}(w))}{s}.\label{eq:flowint}
\end{align}
\end{thrm}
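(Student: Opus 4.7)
The proof plan is a short and direct argument combining the minimality of $w$ with the evolution variational estimate (EVI) that defines the $\lambda$-flow $\flowS^\auxil$.

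First, I would exploit the minimality of $w$ for $\frac{1}{2\tau}\W^2(\cdot,\tilde w)+\Psi$ by testing against the competitor $\flowS_s^\auxil(w)$ for an arbitrary $s>0$. Rearranging gives
$$\Psi(w)-\Psi(\flowS_s^\auxil(w))\le \frac{1}{2\tau}\bigl[\W^2(\flowS_s^\auxil(w),\tilde w)-\W^2(w,\tilde w)\bigr].$$
If $\Psi(\flowS_s^\auxil(w))=+\infty$ this is trivial, so one may assume both sides are finite; the inclusion $\mathrm{Dom}(\Psi)\subset\mathrm{Dom}(\auxil)$ ensures that $w\in\mathrm{Dom}(\auxil)$, so $\flowS_s^\auxil(w)$ is well-defined and $\auxil(w)<+\infty$.

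Next, I would integrate the EVI from $0$ to $s$, using $\flowS_0^\auxil(w)=w$, to obtain
$$\frac{1}{2}\bigl[\W^2(\flowS_s^\auxil(w),\tilde w)-\W^2(w,\tilde w)\bigr]\le s\,\auxil(\tilde w)-\int_0^s\left[\auxil(\flowS_r^\auxil(w))+\frac{\lambda}{2}\W^2(\flowS_r^\auxil(w),\tilde w)\right]\,\mathrm{d}r.$$
Inserting this upper bound into the minimality inequality, dividing by $s\tau$, and passing to $\limsup_{s\searrow 0}$ would yield
$$\mathfrak{D}^\auxil\Psi(w)\le \frac{1}{\tau}\Bigl[\auxil(\tilde w)-\auxil(w)-\tfrac{\lambda}{2}\W^2(w,\tilde w)\Bigr],$$
which upon multiplying by $\tau$ and rearranging is exactly the claimed estimate.

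The one delicate point I expect is the passage to the limit $s\searrow 0$ in the mean values $\frac{1}{s}\int_0^s\auxil(\flowS_r^\auxil(w))\,\mathrm{d}r$ and $\frac{1}{s}\int_0^s \W^2(\flowS_r^\auxil(w),\tilde w)\,\mathrm{d}r$. Here I would combine the monotonicity $\auxil(\flowS_r^\auxil(w))\le \auxil(w)$ built into Definition \ref{thm:0flow} with the narrow continuity of the semigroup at $r=0$ and the lower semicontinuity of $\auxil$ to conclude $\lim_{r\to 0}\auxil(\flowS_r^\auxil(w))=\auxil(w)$; the corresponding limit for the $\W^2$-integrand follows from the continuity of $\W$ with respect to narrow convergence. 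Once these two limits are secured, the rest of the argument is purely algebraic, and no further calculus beyond the absolute continuity of $s\mapsto \W^2(\flowS_s^\auxil(w),\tilde w)$ (standard for EVI-flows) is needed to justify integrating the differential inequality.
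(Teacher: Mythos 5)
Your argument is correct and reconstructs the standard proof of the flow interchange lemma from \cite[Thm.\ 3.2]{matthes2009} (the paper in front of you only cites the result and does not reprove it). The key steps are the right ones: test the minimality of $w$ against the competitor $\flowS_s^\auxil(w)$, control the resulting Wasserstein increment by the evolution variational inequality, and pass $s\searrow 0$; your handling of the delicate limit --- using the monotonicity of $\auxil$ along the flow together with lower semicontinuity and narrow continuity to identify $\lim_{s\searrow 0}\frac1s\int_0^s\auxil(\flowS_r^\auxil(w))\,\mathrm{d}r=\auxil(w)$, and the continuity of $\W$ for the penalty term --- is exactly what is needed and is in line with the original reference. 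The only remark is that one can avoid the integration step entirely by observing that the $\limsup$ of the Wasserstein difference quotients is by definition $\frac{\mathrm{d}^+}{\mathrm{d}s}\big|_{s=0}\W^2(\flowS_s^\auxil(w),\tilde w)$ and applying the EVI at $s=0$ directly; your integrate-then-differentiate variant buys nothing essentially new but is equally rigorous, and it makes explicit use of the monotonicity property in Definition \ref{thm:0flow} to guarantee that $\auxil(\flowS_r^\auxil(w))$ is bounded (hence integrable) on $[0,s]$.
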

The following theorem provides an extension of the Aubin-Lions compactness lemma to our metric setting:
\begin{thrm}[Extension of the Aubin-Lions lemma {\cite[Thm. 2]{rossi2003}}]\label{thm:ex_aub}
Let $\ban$ be a Banach space and $\mathcal{A}:\,\ban\to[0,\infty]$ be lower semicontinuous and have relatively compact sublevels in $\ban$. Let furthermore $\dg:\,\ban\times \ban\to[0,\infty]$ be lower semicontinuous and such that $\dg(u,\tilde u)=0$ for $u,\tilde u\in \operatorname{Dom}(\mathcal{A})$ implies $u=\tilde u$.

Let $(U_k)_{k\in\N}$ be a sequence of measurable functions $U_k:\,(0,T)\to \ban$. If
\begin{align}
\sup_{k\in\N}\int_0^T\mathcal{A}(U_k(t))\dd t&<\infty,\label{eq:hypo1}\\
\lim_{h\searrow 0}\sup_{k\in\N}\int_0^{T-h}\dg(U_k(t+h),U_k(t))\dd t&=0,\label{eq:hypo2}
\end{align}
then there exists a subsequence that converges in measure w.r.t. $t\in(0,T)$ to a limit $U:\,(0,T)\to \ban$.
\end{thrm}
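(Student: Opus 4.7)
The plan is to combine tightness in $\ban$ (from \eqref{eq:hypo1}, via the compact sublevels of $\mathcal{A}$) with equicontinuity in measure of time translations (from \eqref{eq:hypo2}, after converting $\dg$-smallness to $\|\cdot\|_\ban$-smallness on compact sets) and then apply a Riesz--Fréchet--Kolmogorov-type extraction in a suitable bounded pseudometric. The first, preparatory step is \emph{tightness}: for each $n\in\N$, Markov's inequality applied to \eqref{eq:hypo1} yields some $R_n>0$ with
\begin{align*}
\sup_{k\in\N}\Leb\bigl(\{t\in(0,T):\mathcal{A}(U_k(t))>R_n\}\bigr)\le \tfrac{1}{n},
\end{align*}
and the sublevel $K_n:=\{\mathcal{A}\le R_n\}$ is closed (by lsc of $\mathcal{A}$) and relatively compact (by assumption), hence compact; moreover $K_n\subset\operatorname{Dom}(\mathcal{A})$.

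The second, crucial step upgrades the qualitative implication $\dg(u,\tilde u)=0\Rightarrow u=\tilde u$ on $\operatorname{Dom}(\mathcal{A})$ to a quantitative modulus on each $K_n$: for every $\eps>0$,
\begin{align*}
m_n(\eps):=\inf\bigl\{\dg(u,v):u,v\in K_n,\,\|u-v\|_\ban\ge\eps\bigr\}>0,
\end{align*}
since otherwise minimizing sequences $(u_j,v_j)$ in the compact set $K_n\times K_n$ would subconverge to some $(u,v)\in K_n\times K_n$ with $\|u-v\|_\ban\ge\eps$ and $\dg(u,v)=0$ (by lsc of $\dg$), contradicting nondegeneracy on $\operatorname{Dom}(\mathcal{A})$. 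Combined with \eqref{eq:hypo2}, this yields equicontinuity in measure under time shifts: for any $\eps,\delta>0$, choose $n$ with $2/n<\delta/2$; off a time-set of measure $<\delta/2$ both $U_k(t)$ and $U_k(t+h)$ lie in $K_n$, and a further Markov step gives
\begin{align*}
\Leb\bigl(\{t:\|U_k(t+h)-U_k(t)\|_\ban>\eps\}\bigr)\le \tfrac{\delta}{2}+\frac{1}{m_n(\eps)}\int_0^{T-h}\dg(U_k(t+h),U_k(t))\,\mathrm{d}t<\delta
\end{align*}
for all $k$ once $h$ is sufficiently small.

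For the final extraction, endow $\ban$ with the bounded pseudometric $d(u,v):=\|u-v\|_\ban\wedge 1$. Since $d\le 1$, the equicontinuity in measure just established transfers to $\sup_k\int_0^{T-h}d(U_k(t+h),U_k(t))\,\mathrm{d}t\to 0$ as $h\searrow 0$; together with the tightness from step one, these are precisely the hypotheses of a Bochner-valued Riesz--Fréchet--Kolmogorov compactness criterion, which yields a subsequence Cauchy in $L^1((0,T);(\ban,d))$ and hence in measure, with measurable limit $U:(0,T)\to\ban$. The hard part is the quantitative separation $m_n(\eps)>0$ of step two: $\dg$ is merely lsc, possibly infinite, and neither assumed symmetric nor required to satisfy a triangle inequality, so it is only the simultaneous interplay of compactness of sublevels of $\mathcal{A}$, lsc of $\dg$, and nondegeneracy on $\operatorname{Dom}(\mathcal{A})$ that forces a genuine norm-modulus out of the abstract $\dg$-control --- exactly what is needed to turn the $\dg$-hypothesis \eqref{eq:hypo2} into equicontinuity in the norm topology of $\ban$.
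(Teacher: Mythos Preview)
The paper does not prove this statement; it is quoted from Rossi and Savar\'e \cite[Thm.~2]{rossi2003} as a black-box tool, so there is no in-paper proof to compare against.

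Your sketch follows essentially the strategy of the original Rossi--Savar\'e argument, and the first three steps are sound: the tightness via Markov on \eqref{eq:hypo1}, the quantitative modulus $m_n(\eps)>0$ obtained from compactness of $K_n\times K_n$ together with lower semicontinuity of $\dg$, and the conversion of \eqref{eq:hypo2} into equicontinuity in measure are all correct and constitute the substantive content. The point that deserves more care is the last step. You invoke a ``Bochner-valued Riesz--Fr\'echet--Kolmogorov criterion'' in $L^1((0,T);(\ban,d))$ with $d=\|\cdot\|_\ban\wedge 1$, but $(\ban,d)$ is only a metric space, not a normed space, so the standard Bochner-space RFK theorem does not apply directly. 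What is actually needed is a compactness criterion for measurable maps into a metric space, combining the tightness (values in the compact $K_n$ up to small measure) with the integral equicontinuity $\sup_k\int_0^{T-h} d(U_k(t+h),U_k(t))\,\mathrm{d}t\to 0$. Such a criterion can be established---for instance by uniform approximation with $K_n$-valued step functions and a diagonal extraction---but it is not an off-the-shelf citation; it is precisely the technical core that the Rossi--Savar\'e paper supplies. In short: your outline is correct and matches the original approach, but the final extraction hides nontrivial work that \cite{rossi2003} carries out in detail.
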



\section{The minimizing movement scheme}\label{sec:minmov}
This section is devoted to the study of the energy functional $\Phi$ defined in \eqref{eq:energy} and of the associated minimizing movement scheme \eqref{eq:minmov}. Throughout, we assume that $u_0\in \Prob\cap H^1(\Omega)$ and that $F$ satisfies the conditions from Assumption \ref{ass}.
\begin{prop}[Properties of the energy]\label{prop:energyprop}
The energy functional $\Phi$ is proper, nonnegative and lower semicontinuous in $(\Prob,\W)$. Moreover, there exist $C_0>0$ and $C_1>0$ such that
\begin{align}
\Phi(u)&\ge C_0 \|u\|_{H^1}^2-C_1\qquad\forall u\in \Prob\cap H^1(\Omega).\label{eq:coerc}
\end{align}
\end{prop}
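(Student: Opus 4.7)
The plan is to treat the four claims (properness, nonnegativity, lower semicontinuity, coercivity) separately, deriving coercivity first since it will be the main input for lower semicontinuity.

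Nonnegativity is immediate from the lower bound $F(x,z,p)\ge c|p|^2\ge 0$ in Assumption \ref{ass}\eqref{ass3}: integrating yields $\Phi(u)\ge 0$ for every admissible $u$. Properness follows because the assumption $u_0\in\Prob\cap \Ho$ together with the upper bound $F(x,z,p)\le C(|p|^2+1)$ gives $\Phi(u_0)\le C(\|\nabla u_0\|_{\Lo}^2+|\Omega|)<\infty$, so $\Phi\not\equiv+\infty$.

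For the coercivity estimate \eqref{eq:coerc}, I would first use the lower bound from Assumption \ref{ass}\eqref{ass3} to get $\Phi(u)\ge c\|\nabla u\|_{\Lo}^2$. Since $u$ is a probability density on the bounded convex domain $\Omega$, the Poincaré--Wirtinger inequality applied to $u-|\Omega|^{-1}$ gives
\[
\|u\|_{\Lo}^2\le 2\|u-|\Omega|^{-1}\|_{\Lo}^2+2|\Omega|^{-1}\le 2C_P^2\|\nabla u\|_{\Lo}^2+2|\Omega|^{-1},
\]
so that $\|u\|_{\Ho}^2\le (2C_P^2+1)\|\nabla u\|_{\Lo}^2+2|\Omega|^{-1}$. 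Rearranging and absorbing constants yields \eqref{eq:coerc} with explicit $C_0,C_1>0$ depending only on $c,C_P,|\Omega|$.

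For lower semicontinuity, let $(u_n)_{n\in\N}\subset\Prob$ converge narrowly to some $\mu\in\Prob$; it suffices to treat sequences with $\liminf_n\Phi(u_n)<\infty$. The coercivity bound \eqref{eq:coerc} then shows that $(u_n)$ is bounded in $\Ho$; hence a subsequence converges weakly in $\Ho$ and strongly in $\Lo$ (compact Sobolev embedding on the bounded smooth domain $\Omega$), and along a further subsequence pointwise a.e., to some $u\in\Ho$. Narrow convergence forces $\mu=u\,\Leb^d$. Since Assumption \ref{ass}\eqref{ass2} in particular makes $p\mapsto F(x,z,p)$ convex, and since $F$ is continuous and bounded below by $0$, Ioffe's lower semicontinuity theorem for convex integrands applied with the a.e.\ convergent sequence $u_n\to u$ and weakly convergent gradients $\nabla u_n\rightharpoonup\nabla u$ in $\Lo$ yields $\Phi(u)\le\liminf_n\Phi(u_n)$, as required.

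The only mildly delicate step is the lower semicontinuity: one must combine compactness from coercivity with a joint lower semicontinuity result for the nonautonomous integrand $F(x,z,p)$, which is why Ioffe's theorem (or an equivalent result based on convexity in $p$ and continuity in $(x,z)$) is the right tool rather than a naive Fatou argument. All remaining estimates are routine.
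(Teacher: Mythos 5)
Your proof is correct and follows essentially the same route as the paper's: coercivity via the lower bound in Assumption \ref{ass}\eqref{ass3} and Poincar\'e, then lower semicontinuity by extracting a weakly $H^1$-convergent subsequence from a bounded-energy sequence and exploiting convexity of the integrand. The only difference is cosmetic: you pass explicitly to a.e.\ convergence (via Rellich) and invoke Ioffe's theorem, which needs only convexity in the gradient slot, whereas the paper appeals to the full joint convexity from Assumption \ref{ass}\eqref{ass2} and cites a weak-$H^1$ lower-semicontinuity theorem directly --- both are immediate consequences of the same assumption.
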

\begin{proof}
Obviously, Assumption \ref{ass}\eqref{ass3} yields that $0\le \Phi(u)<\infty$ for all $u\in \Prob\cap H^1(\Omega)$. For those $u$, Assumption \ref{ass}\eqref{ass3} gives $c\|\nabla u\|_{L^2}^2\le \Phi(u)$. Using $\|u\|_{L^1}=1$ and Poincaré's inequality
\begin{align*}
\left\|u-\mathcal{L}^d(\Omega)^{-1}\right\|_{L^2}^2\le K\|\nabla u_n\|_{L^2}^2,
\end{align*}
we deduce \eqref{eq:coerc}. It remains to prove that $\Phi$ is lower semicontinuous. Let $\W(\mu_n,\mu)\to 0$ as $n\to\infty$ for a sequence $(\mu_n)_{n\in\N}$ and a limit $\mu$ in $\Prob$. If $\liminf\limits_{n\to\infty} \Phi(\mu_n)=+\infty$, there is nothing to prove. We thus can assume without loss of generality that $(\Phi(\mu_n))_{n\in\N}$ is bounded. Hence $\mu_n$ is absolutely continuous for every $n\in \N$, and by Alaoglu's theorem, there exists a subsequence (non-relabelled) on which $u_n\rightharpoonup u$ in $H^1(\Omega)$. Since $F$ is jointly convex in $(x,z,p)$ by Assumption \ref{ass}\eqref{ass2} and bounded w.r.t. $x\in\Omega$, $\Phi$ is lower semicontinuous with respect to weak convergence in $H^1(\Omega)$ \cite[Thm. 10.16]{renardy2004}, completing the proof.
\end{proof}
The properties from Proposition \ref{prop:energyprop} allow us to show the well-posedness of the minimizing movement scheme \eqref{eq:minmov}. For the sake of presentation, we introduce the notation
\begin{align*}
\Phi_\tau(\cdot;v):=\frac1{2\tau}\W(\cdot,v)^2+\Phi
\end{align*}
for the Yosida penalization associated to $\Phi$, for given $\tau>0$ and $v\in\Prob$.
\begin{prop}[Minimizing movement scheme]\label{prop:mmov}
If $\tau>0$ and $v\in\Prob$ with $\Phi(v)<\infty$, then there exists a minimizer $u\in \Prob$ of $\Phi_\tau(\cdot;v)$ satisfying $\Phi(u)<\infty$.
\end{prop}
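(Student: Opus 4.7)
The plan is to run the direct method of the calculus of variations, with all the necessary ingredients already collected in Proposition \ref{prop:energyprop}. First I would observe that $\Phi_\tau(\cdot;v)$ is nonnegative, and that evaluating at $v$ itself gives $\Phi_\tau(v;v)=\Phi(v)<\infty$, so the infimum $I:=\inf_{\Prob}\Phi_\tau(\cdot;v)$ lies in $[0,\Phi(v)]$. Pick a minimizing sequence $(u_n)_{n\in\N}\subset\Prob$; discarding finitely many elements I may assume $\Phi(u_n)\le \Phi_\tau(u_n;v)\le \Phi(v)+1$ for every $n$.

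Next I would extract a good limit. By the coercivity estimate \eqref{eq:coerc}, $(u_n)$ is bounded in $\Ho$, so Alaoglu's theorem yields, along a (non-relabelled) subsequence, a weak limit $u\in\Ho$ with $u_n\rightharpoonup u$ in $\Ho$. The compact embedding $\Ho\compact \Lo$ (valid because $\Omega$ is bounded with smooth boundary) upgrades this to strong $\Lo$-convergence, and then to $L^1$-convergence since $\Omega$ has finite measure. In particular $u\ge 0$ a.e.\ and $\IOx{u}=1$, so $u\in\Prob$. The $L^1$-convergence of densities implies narrow convergence of the associated measures, which on the bounded set $\Omega$ is equivalent to $\W(u_n,u)\to 0$; by the triangle inequality, $\W(u_n,v)\to \W(u,v)$.

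Finally I would pass to the liminf. Weak $\Ho$-lower semicontinuity of $\Phi$, already proved in Proposition \ref{prop:energyprop}, gives $\Phi(u)\le \liminf_n\Phi(u_n)$; combined with the continuity of $\W(\cdot,v)$ just noted this yields
\begin{align*}
\Phi_\tau(u;v)\le \liminf_{n\to\infty}\Phi_\tau(u_n;v)=I,
\end{align*}
so $u$ realizes the infimum. The bound $\Phi(u)\le \Phi_\tau(u;v)\le \Phi(v)<\infty$ then gives the last assertion. I do not anticipate any serious obstacle: the argument is a textbook direct method, since coercivity, lower semicontinuity and the equivalence of narrow and $\W$-convergence on the bounded domain $\Omega$ are all in hand. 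The one point that warrants a careful sentence is the verification that the limit density still has unit mass and is nonnegative, which is handled cleanly by the strong $L^1$-convergence coming from the compact Sobolev embedding.
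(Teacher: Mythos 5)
Your argument is correct and is essentially the same direct-method proof that the paper gives, just spelled out in more detail: both use the coercivity and $H^1$-weak lower semicontinuity from Proposition \ref{prop:energyprop} on a bounded infimizing sequence, together with the fact that on the bounded domain $\Omega$ narrow convergence of $u_n$ to $u$ forces $\W(u_n,v)\to\W(u,v)$.
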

\begin{proof}
Since also $\Phi_\tau(\cdot;v)$ is nonnegative, infimizing sequences for this functional are bounded. The existence of a minimizer is now directly obtained from the properties in Proposition \ref{prop:energyprop}; recall that $u_n\rightharpoonup u$ narrowly implies in particular that $\W(u_n,v)\to \W(u,v)$, since $\Omega\subset\R^d$ is bounded.
\end{proof}
From the scheme \eqref{eq:minmov}, the following classical estimates directly follow (see for instance \cite{savare2008}):
\begin{prop}[Classical estimates]
\label{classicalestimates}
Let $\tau>0$, $u_0\in\Prob\cap H^1(\Omega)$, and define $(\utn)_{n \in \N}$ and $\ut$ by \eqref{eq:minmov}\&\eqref{eq:discsol}.
Then the following holds:
\begin{align}
\Phi(u_\tau^n)\le \Phi(u_\tau^m) &\le \Phi(u_0)\qquad \forall n\ge m\ge 0 \label{supE},\\
\sum_{n\ge 1}\W(\utnn,\utn)^2 &\le 2\tau\Phi(u_0) \label{sumw2},\\
\W(\ut(t),\ut(s)) &\le  \sqrt{2\Phi(u_0)(|t-s|+\tau)}\label{w2cont}\qquad\forall s,t\ge 0.
\end{align}
\end{prop}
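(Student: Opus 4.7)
The proof is a textbook minimizing-movement argument; the three estimates follow sequentially from the variational inequality satisfied by each $\utn$.

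\textbf{Monotonicity of the energy.} The plan is to exploit that $\utn$ minimizes $\Phi_\tau(\cdot;\utnn)$, so testing with the competitor $\utnn$ itself gives
\begin{align*}
\frac{1}{2\tau}\W(\utn,\utnn)^2+\Phi(\utn)\le \frac{1}{2\tau}\W(\utnn,\utnn)^2+\Phi(\utnn)=\Phi(\utnn).
\end{align*}
In particular $\Phi(\utn)\le\Phi(\utnn)$, and iterating this inequality from $m$ up to $n$ yields the chain \eqref{supE}. Note that all quantities are finite thanks to $u_0\in \Prob\cap H^1(\Omega)$ together with Proposition \ref{prop:energyprop} and the existence statement of Proposition \ref{prop:mmov}.

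\textbf{Summability of squared displacements.} The same one-step estimate, rearranged, reads
\begin{align*}
\frac{1}{2\tau}\W(\utn,\utnn)^2\le \Phi(\utnn)-\Phi(\utn).
\end{align*}
I would then sum this telescoping bound over $n=1,\dots,N$ to get
\begin{align*}
\sum_{n=1}^{N}\W(\utn,\utnn)^2\le 2\tau\bigl(\Phi(u_0)-\Phi(u_\tau^N)\bigr)\le 2\tau\Phi(u_0),
\end{align*}
using nonnegativity of $\Phi$ from Proposition \ref{prop:energyprop}. Letting $N\to\infty$ gives \eqref{sumw2}.

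\textbf{Approximate Hölder continuity.} For the final estimate I would fix $0\le s\le t$ and let $m=\lceil s/\tau\rceil$, $n=\lceil t/\tau\rceil$, so that $n-m\le (t-s)/\tau+1$ and $\ut(s)=u_\tau^m$, $\ut(t)=u_\tau^n$. The triangle inequality for $\W$ together with the Cauchy--Schwarz inequality then yields
\begin{align*}
\W(\ut(t),\ut(s))\le \sum_{k=m+1}^{n}\W(u_\tau^k,u_\tau^{k-1})\le \sqrt{n-m}\,\Bigl(\sum_{k=m+1}^{n}\W(u_\tau^k,u_\tau^{k-1})^2\Bigr)^{1/2}.
\end{align*}
Plugging in $n-m\le (t-s)/\tau+1$ and the bound \eqref{sumw2} gives
\begin{align*}
\W(\ut(t),\ut(s))\le \sqrt{\frac{t-s+\tau}{\tau}}\,\sqrt{2\tau\Phi(u_0)}=\sqrt{2\Phi(u_0)(|t-s|+\tau)},
\end{align*}
which is \eqref{w2cont}. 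No step is a real obstacle here: the only delicate point is ensuring that all ingredients (nonnegativity, lower semicontinuity, existence of minimizers) have already been provided, which Propositions \ref{prop:energyprop} and \ref{prop:mmov} guarantee.
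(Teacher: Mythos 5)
Your proof is correct and is precisely the standard minimizing-movement argument that the paper merely cites (to Ambrosio–Gigli–Savaré) without reproducing: comparison with the previous iterate gives the one-step energy inequality, which is iterated for \eqref{supE}, telescoped for \eqref{sumw2}, and combined with the triangle inequality and Cauchy–Schwarz for \eqref{w2cont}. All the supporting ingredients you invoke (nonnegativity and lower semicontinuity of $\Phi$, existence of minimizers) are indeed supplied by Propositions \ref{prop:energyprop} and \ref{prop:mmov}, so the argument is complete.
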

Later, we need the following classical result on boundary values:
\begin{lem}[Boundary terms {\cite[Lemma 5.2]{gianazza2009}}]\label{lem:boundary}
Let $v\in C^2(\overline{\Omega})$ with $\partial_\nu v=0$ on $\partial\Omega$. Then, one has $\nabla v\cdot\nabla^2 v\nu\le 0$ on $\partial\Omega$.
\end{lem}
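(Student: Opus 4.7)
The plan is to exploit the convexity of $\Omega$ together with the Neumann condition $\Dn v = 0$ on $\partial\Omega$, which forces $\grad v$ to be everywhere tangent to the boundary. At a fixed point $x_0\in\partial\Omega$, I will rewrite the quantity $\grad v(x_0) \cdot \grad^2 v(x_0)\,\nu(x_0)$ as a directional derivative of the identity $\grad v\cdot\nu = 0$ along $\partial\Omega$; this recovers the second fundamental form of $\partial\Omega$ evaluated at the tangent vector $\grad v(x_0)$, after which convexity closes the argument.

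More concretely, fix $x_0\in \partial\Omega$ and extend the unit outward normal field $\nu$ smoothly to an open neighborhood of $x_0$. Since $\Dn v \equiv 0$ on $\partial\Omega$, the vector $t := \grad v(x_0)$ is tangent to $\partial\Omega$ at $x_0$, so there exists a smooth curve $\gamma:(-\eps,\eps)\to\partial\Omega$ with $\gamma(0) = x_0$ and $\dot\gamma(0) = t$. Along $\gamma$ the boundary condition reads $\grad v(\gamma(s))\cdot \nu(\gamma(s)) = 0$ for all $s$; differentiating at $s=0$ and using the symmetry of $\grad^2 v$ yields
\begin{equation*}
\grad v(x_0) \cdot \grad^2 v(x_0)\,\nu(x_0) = \bigl(\grad^2 v(x_0)\,t\bigr)\cdot\nu(x_0) = -\grad v(x_0)\cdot\bigl(\dff\nu(x_0)\,\grad v(x_0)\bigr).
\end{equation*}

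The right-hand side is (up to sign) the second fundamental form of $\partial\Omega$ evaluated on $(t,t)$. Since $\Omega$ is convex and $\nu$ is the \emph{outward} unit normal, the shape operator $\dff\nu$ is positive semidefinite on the tangent space of $\partial\Omega$; this is a standard characterization of convexity for smooth domains. Combining this with the displayed identity gives $\grad v\cdot \grad^2 v\,\nu\le 0$ at $x_0$, and since $x_0$ was arbitrary, the claim follows on all of $\partial\Omega$.

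The main obstacle is really only bookkeeping: making sure the sign convention for the Weingarten map $\dff\nu$ matches the convexity hypothesis so that the final inequality has the correct sign. Analytically the argument is immediate — once the differentiation along a tangential curve is performed and $\grad^2 v$ is symmetrized, convexity of $\Omega$ does all the work.
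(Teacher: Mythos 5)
Your proof is correct and follows essentially the same route as the paper's: differentiate the Neumann condition $\nabla v\cdot\nu=0$ along a boundary curve with velocity $\nabla v$, identify the extra term as the second fundamental form (equivalently, the Weingarten map applied to $\nabla v$), and invoke convexity of $\Omega$ to get the sign. Your explicit tracking of $\dff\nu$ versus the paper's direct use of $II_x[\nabla v,\nabla v]$ is only a notational difference.
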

%


\section{Additional regularity of the discrete solution}\label{sec:addreg}
The main result of this section is concerned with the additional regularity of the minimizers in \eqref{eq:minmov} and reads
\begin{prop}[Additional regularity]\label{prop:addreg}
Let $\tau>0$ and $v\in \Prob\cap H^1(\Omega)$. Then, a minimizer $u\in \Prob\cap H^1(\Omega)$ of $\Phi_\tau(\cdot;v)$ admits the estimate
\begin{align}
\label{eq:addreg}
\int_\Omega \|\nabla^2 u\|^2\dd x&\le \frac1{\gamma\tau}\int_\Omega(v\log(v)-u\log(u))\dd x+C_3(\|u\|_{H^1}^2+1)
\end{align}
with $\gamma>0$ from Assumption \ref{ass}\eqref{ass2} and some constant $C_3\ge 0$. In particular, $u\in H^2(\Omega)$.
\end{prop}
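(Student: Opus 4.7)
The plan is to apply the flow interchange lemma (Theorem \ref{thm:flowinterchange}) with the auxiliary functional $\auxil = \ent$, where $\ent(\rho) := \IOx{\rho\log\rho}$ is the Boltzmann entropy. On the convex domain $\Omega$, $\ent$ is proper, lower semicontinuous and displacement $0$-convex on $(\Prob,\W)$, and its $0$-flow $\mathsf{S}_s^{\ent}$ coincides with the heat semigroup subject to homogeneous Neumann boundary conditions. Since $\operatorname{Dom}(\Phi)\subset H^1(\Omega)\subset\operatorname{Dom}(\ent)$, choosing $\Psi=\Phi$ in the lemma yields
\begin{equation*}
\ent(u) + \tau\,\mathfrak{D}^{\ent}\Phi(u) \le \ent(v).
\end{equation*}
Thus it suffices to establish the entropy-dissipation estimate $\mathfrak{D}^{\ent}\Phi(u) \ge \gamma\IOx{\|\nabla^2 u\|^2} - \widetilde{C}(\|u\|_{H^1}^2+1)$ for some $\widetilde{C}\ge 0$; rearranging then gives \eqref{eq:addreg}, and combining finiteness of $\IOx{\|\nabla^2 u\|^2}$ with $\partial_\nu u=0$ and elliptic regularity delivers $u\in H^2(\Omega)$.

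To prove the dissipation estimate I would set $u_s := \mathsf{S}_s^{\ent}(u)$ and work along the flow for $s>0$, where parabolic regularity provides classical smoothness of $u_s$; the estimate at $s=0$ then follows by lower semicontinuity. Differentiating, using $\partial_s u_s = \Delta u_s$ and $\partial_\nu u_s = 0$, gives
\begin{equation*}
-\frac{\dd}{\dd s}\Phi(u_s) = -\IOx{F_z(x,u_s,\nabla u_s)\,\Delta u_s} - \IOx{F_p(x,u_s,\nabla u_s)\cdot\nabla\Delta u_s}.
\end{equation*}
The radial symmetry in $p$ (Assumption \ref{ass}(\ref{ass1})) forces $F_p\parallel \nabla u_s$, hence $F_p\cdot\nu=0$ on $\partial\Omega$. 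Consequently the first integration by parts in the $F_p$-term carries no boundary contribution; the second produces a boundary integrand proportional to $F_p\cdot(\nabla^2 u_s\,\nu)$, which by radial symmetry and Lemma \ref{lem:boundary} has the favourable sign $\le 0$ and can be discarded. The $F_z$-term is integrated by parts using $\partial_\nu u_s=0$. After these manipulations, the surviving principal bulk term is $\IOx{\mathrm{tr}\!\left(F_{pp}\,(\nabla^2 u_s)^2\right)}$; together with $F_{pp}\ge\gamma I$ from Assumption \ref{ass}(\ref{ass2}), which via diagonalisation yields $\mathrm{tr}(F_{pp}A^2)\ge\gamma\|A\|^2$ for every symmetric matrix $A$, this contributes the desired principal quantity $\gamma\IOx{\|\nabla^2 u_s\|^2}$.

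The remaining contributions are at most linear in $\nabla^2 u_s$ and involve the mixed derivatives $F_{xp}$, $F_{zp}$, $F_{zx}$ and $F_{zz}$ paired with $\nabla u_s$. For each, Young's inequality splits off a small multiple of $\|\nabla^2 u_s\|^2$ (absorbed into the $\gamma$-term) and a lower-order remainder controlled via the growth estimates of Assumption \ref{ass}(\ref{ass3}) for $|F_x|$, $\mathrm{tr}(F_{xx})$, $z|F_z|$ and $|F_p|^2$, producing the bound $\widetilde{C}(\|u_s\|_{H^1}^2+1)$. The main technical obstacle is precisely that Assumption \ref{ass}(\ref{ass3}) does not give direct bounds on the cross derivatives $F_{xp}$ and $F_{zp}$; I expect to circumvent this by shifting a derivative onto $u_s$ through an additional integration by parts, so that the remaining integrands only involve the bounded first-order quantities $F_x$, $F_p$ and $zF_z$ (perhaps in combination with radial symmetry, which lets one re-express $F_{xp}$ through derivatives of $\tilde F_r$). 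Finally, passing to $s\searrow 0$ by lower semicontinuity of $\rho\mapsto\IOx{\|\nabla^2\rho\|^2}$ on $H^1(\Omega)$ and continuity of $\rho\mapsto\|\rho\|_{H^1}^2$ along the heat flow at $s=0$ (which follows from $\Phi(u_s)\le \Phi(u)$ and the coercivity \eqref{eq:coerc}) yields the required lower bound for $\mathfrak{D}^{\ent}\Phi(u)$ and concludes the argument.
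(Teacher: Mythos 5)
Your overall framework is right and matches the paper: flow interchange with the Boltzmann entropy $\ent(\rho)=\int_\Omega \rho\log\rho\dd x$, whose $0$-flow is the Neumann heat semigroup; differentiate $\Phi$ along this flow for $s>0$; integrate by parts; control the boundary term by radial symmetry and Lemma~\ref{lem:boundary}; pass to $s\searrow 0$ by lower semicontinuity. You also correctly identify the principal bulk term as $\int_\Omega \tr\bigl(F_{pp}(\nabla^2 u_s)^2\bigr)\dd x\ge\gamma\int_\Omega\|\nabla^2 u_s\|^2\dd x$.

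The gap is in the treatment of the lower-order terms, and you in fact flag part of it yourself. After the integration by parts, the bulk integrand equals
\begin{align*}
\tr\bigl(F_{pp}(\nabla^2 u_s)^2\bigr)+\tr\bigl(F_{xp}\nabla^2 u_s\bigr)+2F_{zp}\cdot\nabla^2 u_s\,\nabla u_s+F_{xz}\cdot\nabla u_s+F_{zz}|\nabla u_s|^2,
\end{align*}
so besides $F_{xp}$ and $F_{zp}$ (which you notice) it also involves $F_{xz}$ and $F_{zz}$. Assumption~\ref{ass}\eqref{ass3} bounds only $|F_x|$, $\tr F_{xx}$, $z|F_z|$ and $|F_p|^2$, and gives nothing on any of these mixed second derivatives; the nonnegativity of $F_{zz}$ from convexity merely makes one of them sign-favourable but does not help absorb the others. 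The proposed fix of ``shifting a derivative onto $u_s$'' does not close this: any further integration by parts on, say, $\tr(F_{xp}\nabla^2 u_s)$ produces third derivatives of $F$, which do not exist since $F\in C^2$ only. The paper avoids the issue entirely by never splitting $\dff^2F$ into sub-blocks: it writes the bulk term as
\begin{align*}
\sum_{i=1}^d \dff^2_{(x,z,p)}F\bigl[(\ce_i,\partial_{x_i}u_s,\partial_{x_i}\nabla u_s),(0,\partial_{x_i}u_s,\partial_{x_i}\nabla u_s)\bigr]
\end{align*}
and applies the polarization identity $B[a,b]=B\bigl[\tfrac12(a+b),\tfrac12(a+b)\bigr]-B\bigl[\tfrac12(a-b),\tfrac12(a-b)\bigr]$ with $a=(\ce_i,\partial_{x_i}u_s,\partial_{x_i}\nabla u_s)$, $b=(0,\partial_{x_i}u_s,\partial_{x_i}\nabla u_s)$. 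The first square is bounded below by $\gamma|\partial_{x_i}\nabla u_s|^2$ via Assumption~\ref{ass}\eqref{ass2}, and the second collapses to $\tfrac14 F_{x_ix_i}$, which after summation is exactly $\tfrac14\tr F_{xx}$ and is controlled by Assumption~\ref{ass}\eqref{ass3}. That algebraic completion of the square is the essential step you are missing; without it the estimate does not close under the stated hypotheses.
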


\begin{proof}
The main idea of proof is to apply the flow interchange lemma (Theorem \ref{thm:flowinterchange}) to the free energy $\Phi$, with the following choice of the auxiliary functional: It is well-known (see for instance \cite{mccann1997}) that \emph{Boltzmann's entropy} $\ent(v)=\int_\Omega v\log(v)\dd x$ is a lower semicontinuous displacement $0$-convex functional on $(\Prob,\W)$, and its corresponding gradient flow semigroup $\flowS^\ent$ (see Definition \ref{thm:0flow}) is a solution to the heat equation with Neumann boundary conditions
\begin{xalignat}{2}
\partial_s \flowS_s^\ent(v)&=\Delta  \flowS_s^\ent(v) &&\text{on }(0,\infty)\times\Omega,\label{eq:heat}\\
\partial_\nu \flowS_s^\ent(v)&=0 &&\text{on }(0,\infty)\times\partial\Omega.\label{eq:heatbc}
\end{xalignat}
For the dissipation of $\Phi$ at $u$ along the smooth flow $\flowS^\ent$, we obtain (write $u_s:=\flowS_s^\ent(u)$ for brevity)
\begin{align*}
-\frac{\dd}{\dd s}\Phi(u_s)&=-\int_\Omega \dff_{(x,z,p)}F(x,u_s,\nabla u_s)[(0,\Delta u_s,\Delta\nabla u_s)]\dd x\\&=-\sum_{i=1}^d\int_\Omega \dff_{(x,z,p)}F(x, u_s,\nabla u_s)[\partial_{x_i}(0,\partial_{x_i} u_s,\partial_{x_i}\nabla u_s)]\dd x.
\end{align*}
Integration by parts yields with the canonical unit vectors $\ce_1,\ldots,\ce_d\in\R^d$:
\begin{align*}
-\frac{\dd}{\dd s}\Phi( u_s)&=\sum_{i=1}^d\int_\Omega \dff^2_{(x,z,p)}F(x, u_s,\nabla u_s)[(\ce_i,\partial_{x_i} u_s,\partial_{x_i}\nabla u_s),(0,\partial_{x_i} u_s,\partial_{x_i}\nabla u_s)]\dd x\\
&-\sum_{i=1}^d\int_{\partial\Omega}\dff_{(x,z,p)}F(x, u_s,\nabla u_s)[(0,\partial_{x_i} u_s,\partial_{x_i}\nabla u_s)]\nu_i\dS.
\end{align*}
For the boundary term, we use the boundary condition \eqref{eq:heatbc} and Assumption \ref{ass}\eqref{ass1} to get
\begin{align*}
&-\sum_{i=1}^d\int_{\partial\Omega}\dff_{(x,z,p)}F(x, u_s,\nabla u_s)[(0,\partial_{x_i} u_s,\partial_{x_i}\nabla u_s)]\nu_i\dS\\
&=-\int_{\partial\Omega} F_z(x, u_s,\nabla u_s)\nabla u_s\cdot\nu \dS-\int_{\partial\Omega} F_p(x, u_s,\nabla u_s)\cdot\nabla^2 u_s\nu\dS\\
&=-\int_{\partial\Omega}\tilde F_r(x, u_s,|\nabla u_s|)\frac{\nabla u_s}{|\nabla u_s|}\cdot\nabla^2 u_s\nu\dS.
\end{align*}
The last expression above is nonnegative since $\tilde F_r\ge 0$ thanks to Assumption \ref{ass}\eqref{ass1} and due to Lemma \ref{lem:boundary}. By elementary linear algebra, we have that
\begin{align*}
&\sum_{i=1}^d\int_\Omega \dff^2_{(x,z,p)}F(x, u_s,\nabla u_s)[(\ce_i,\partial_{x_i} u_s,\partial_{x_i}\nabla u_s),(0,\partial_{x_i} u_s,\partial_{x_i}\nabla u_s)]\dd x\\
&=\sum_{i=1}^d\int_\Omega \dff^2_{(x,z,p)}F(x, u_s,\nabla u_s)[(\tfrac12\ce_i,\partial_{x_i} u_s,\partial_{x_i}\nabla u_s),(\tfrac12\ce_i,\partial_{x_i} u_s,\partial_{x_i}\nabla u_s)]\dd x\\
&\quad-\sum_{i=1}^d\int_\Omega \dff^2_{(x,z,p)}F(x, u_s,\nabla u_s)[(\tfrac12\ce_i,0,0),(\tfrac12\ce_i,0,0)]\dd x\\
&\ge \gamma \int_\Omega\sum_{i=1}^d|\partial_{x_i}\nabla u_s|^2\dd x-\frac14\|\trc F_{xx}(x, u_s,\nabla u_s)\|_{L^1},
\end{align*}
using Assumption \ref{ass}\eqref{ass2} in the last step. With Assumption \ref{ass}\eqref{ass3}, one moreover has
\begin{align*}
|\trc F_{xx}(x, u_s,\nabla u_s)|&\le D(|\nabla u_s|^2+1),
\end{align*}
so all in all there exists $\tilde C>0$ such that
\begin{align*}
-\frac{\dd}{\dd s}\Phi( u_s)&\ge \gamma\int_\Omega \|\nabla^2  u_s\|^2\dd x-\tilde C(\| u_s\|_{H^1}^2+1)\ge \gamma\int_\Omega \|\nabla^2  u_s\|^2\dd x-\tilde C(\|u\|_{H^1}^2+1),
\end{align*}
where the last estimate holds since $\| u_s\|_{H^1}$ is nonincreasing w.r.t. $s>0$ due to the properties of the heat flow and Lemma \ref{lem:boundary}. Passing to the limit inferior as $s\searrow 0$ yields by lower semicontinuity that
\begin{align}
\mathfrak{D}^\ent\Phi(u)\ge \gamma\int_\Omega \|\nabla^2 u\|^2\dd x-\tilde C(\|u\|_{H^1}^2+1).\label{eq:dispheat}
\end{align}
Combining \eqref{eq:dispheat} with the flow interchange estimate \eqref{eq:flowint} obviously yields \eqref{eq:addreg}.
\end{proof}


\section{The discrete weak formulation}\label{sec:discreteweak}
We are now in position to derive a discrete version of the weak formulation \eqref{weakform}:
\begin{lem}[Discrete weak formulation]\label{lem:dweak}
Let $\tau>0$ and define the discrete solution $u_\tau$ by \eqref{eq:minmov}\&\eqref{eq:discsol}. Then, for all $\phi\in C^\infty(\overline{\Omega})$ with $\partial_\nu\phi=0$ on $\partial\Omega$ and all $\eta\in C^\infty_c((0,\infty))\cap C(\R_+)$, the following \emph{discrete weak formulation} holds:
\begin{align}
\begin{split}
&\left|\int_0^\infty\int_\Omega u_\tau(t,x)\phi(x)\frac{\eta_\tau(t+\tau)-\eta_\tau(t)}{\tau}\dd x\dd t-\int_0^\infty\int_\Omega \Non(x,u_\tau(t,x),\phi(x))\eta_\tau(t) \dd x\dd t\right|\\&\qquad\le \tau\|\phi\|_{C^2}\|\eta\|_{C^0}\Phi(u_0),
\end{split}
\label{eq:dweak}
\end{align}
where $\mathcal{N}(x,\cdot,\cdot)$ is defined as in Theorem \ref{thm:existF} and $\eta_\tau(s):=\eta\left(\left\lceil\frac{s}{\tau}\right\rceil\tau\right)$ for $s\ge 0$.
\end{lem}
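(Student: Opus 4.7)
The strategy is to derive a pointwise Euler--Lagrange relation at each minimizer $u_\tau^n$ and then assemble these relations over $n$ by summation by parts in time.

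First, for fixed $n\ge 1$, I would perturb $u_\tau^n$ by pushing forward along the vector field $\nabla\phi$. The condition $\partial_\nu\phi=0$ makes $\nabla\phi$ tangent to $\partial\Omega$, so $\mathsf{T}_s:=\id+s\nabla\phi$ maps $\overline\Omega$ into itself for small $|s|$, and $u_s:=(\mathsf{T}_s)_\# u_\tau^n$ is a smooth curve of probability densities on $\Omega$. Minimality of $u_\tau^n$ for $\Phi_\tau(\,\cdot\,;u_\tau^{n-1})$, tested with both signs of $s$, gives
\[\frac{\dd}{\dd s}\bigg|_{s=0}\!\bigg[\frac{1}{2\tau}\W(u_s,u_\tau^{n-1})^2+\Phi(u_s)\bigg]=0.\]
For the transport term, if $T_n$ denotes the Brenier optimal transport from $u_\tau^n$ to $u_\tau^{n-1}$, using $T_n\circ \mathsf{T}_s^{-1}$ as an admissible plan from $u_s$ to $u_\tau^{n-1}$ and expanding $|T_n(x)-x-s\nabla\phi(x)|^2$ to first order (the $\pm s$ argument yielding equality in both limsup directions) gives the derivative $-\tau^{-1}\int_\Omega (T_n(x)-x)\cdot\nabla\phi(x)u_\tau^n(x)\dd x$.

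For the energy term, the crucial input is that $u_\tau^n\in H^2(\Omega)$ by Proposition \ref{prop:addreg}, making the first variation of $\Phi$ rigorous. Inserting $\partial_s u_s|_{s=0}=-\Div(u_\tau^n\nabla\phi)$ into $\frac{\dd}{\dd s}|_{0}\Phi(u_s)=\int_\Omega (F_z-\Div F_p)\partial_s u_s\dd x$ and repeatedly integrating by parts so as to transfer all derivatives onto $\phi$ yields precisely $\int_\Omega\Non(x,u_\tau^n,\phi)\dd x$. Boundary terms vanish thanks to three facts used in sequence: $\nabla\phi\cdot\nu=0$ handles the first IBP; the natural no-flux boundary condition $\partial_\nu u_\tau^n=0$ inherited from the variational problem, combined with the radial-symmetry structure $F_p=\tilde F_r\,p/|p|$, kills the $F_p\cdot\nu$ term; and Lemma \ref{lem:boundary} handles the $\nabla^2 u\,\nu$ contribution, exactly as in the proof of Proposition \ref{prop:addreg}. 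Combining both pieces gives the stepwise Euler--Lagrange identity
\[\tau\int_\Omega\Non(x,u_\tau^n,\phi)\dd x=\int_\Omega(T_n(x)-x)\cdot\nabla\phi(x)\,u_\tau^n(x)\dd x.\]

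Next, since $T_n\#u_\tau^n=u_\tau^{n-1}$ and $\phi\in C^2$, a Taylor expansion at each $x$ yields
\[\int_\Omega\nabla\phi\cdot(T_n-\id)u_\tau^n\dd x=\int_\Omega\phi(u_\tau^{n-1}-u_\tau^n)\dd x-R_n,\qquad |R_n|\le \tfrac12\|\phi\|_{C^2}\W(u_\tau^n,u_\tau^{n-1})^2.\]
Multiplying by $\eta(n\tau)$ and summing over $n\ge 1$, the left-hand side of the Euler--Lagrange identity assembles into $\int_0^\infty\eta_\tau(t)\int_\Omega\Non(x,u_\tau(t,x),\phi)\dd x\dd t$ by the piecewise-constant definitions of $u_\tau$ and $\eta_\tau$. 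A discrete integration by parts on the right-hand side --- with no boundary contributions, since $\eta\in C^\infty_c((0,\tau^{-1}\Time))$ means $\eta(n\tau)=0$ for all $n$ outside a finite window, and $\eta(0)=0$ --- produces $\int_0^\infty\int_\Omega u_\tau(t,x)\phi(x)\frac{\eta_\tau(t+\tau)-\eta_\tau(t)}{\tau}\dd x\dd t$. The aggregated remainder is bounded using the classical estimate \eqref{sumw2}:
\[\|\eta\|_{C^0}\sum_{n\ge 1}|R_n|\le\tfrac12\|\eta\|_{C^0}\|\phi\|_{C^2}\sum_{n\ge 1}\W(u_\tau^n,u_\tau^{n-1})^2\le \tau\|\eta\|_{C^0}\|\phi\|_{C^2}\Phi(u_0),\]
which is exactly the right-hand side of \eqref{eq:dweak}.

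The main obstacle is the rigorous execution of the first-variation computation for $\Phi$ and its identification with $\int_\Omega\Non\dd x$: one has to perform several integrations by parts involving second derivatives of $u_\tau^n$ that are only $L^2$-integrable, keep track of every boundary contribution, and exploit precisely the combination of the radial-symmetry assumption on $F$, the natural no-flux boundary condition inherited from the minimization, and the convexity of $\Omega$ through Lemma \ref{lem:boundary}. All of this is available only because of the improved $H^2$-regularity established in Proposition \ref{prop:addreg}; without it, the expression $\Div F_p(x,u_\tau^n,\nabla u_\tau^n)$ would not make classical sense and the manipulations underpinning the discrete weak formulation would have to be carried out by approximation.
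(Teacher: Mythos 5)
Your overall architecture (perturb the minimizer along a flow generated by $\nabla\phi$, compute the one-sided first variation of the two terms of $\Phi_\tau$, Taylor-expand $\phi$ at the transport map, sum against $\eta(n\tau)$ with a discrete summation by parts, and absorb the remainders with \eqref{sumw2}) is precisely the JKO/flow-exchange route the paper follows, and the assembly at the end is correct. However, there are two genuine problems in the middle.

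First, the perturbation $\mathsf{T}_s=\id+s\nabla\phi$ need not map $\overline\Omega$ into itself. Tangency of $\nabla\phi$ at $\partial\Omega$ controls the linearization, but for a \emph{convex} domain the straight-line segment $x+s\nabla\phi(x)$ for $x\in\partial\Omega$ leaves $\overline\Omega$ as soon as $\partial\Omega$ has nonzero curvature in the direction of $\nabla\phi(x)$. Consequently $(\mathsf{T}_s)_\#u_\tau^n$ need not be a probability density on $\Omega$ and is not an admissible competitor. The paper avoids this by taking the flow $X_s$ of $\dot y=\nabla\phi(y)$, which does preserve $\overline\Omega$ because $\nabla\phi$ is tangent; since $\partial_sX_s|_{s=0}=\nabla\phi=\partial_s\mathsf{T}_s|_{s=0}$ this costs nothing at the level of the derivative, but it is needed to make the perturbation admissible.

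Second, and more seriously, your computation of $\tfrac{\dd}{\dd s}\big|_0\Phi(u_s)$ proceeds through the pointwise Euler--Lagrange density $F_z-\Div F_p$ and then integrates by parts, invoking $\partial_\nu u_\tau^n=0$ together with the radial structure of $F_p$ to cancel the boundary term from the first IBP. The no-flux boundary condition $\partial_\nu u_\tau^n=0$ is never established for the JKO minimizer $u_\tau^n$; it is a \emph{consequence} of the Euler--Lagrange equation, not a hypothesis, and proving that minimizers of $\Phi_\tau(\cdot;v)$ satisfy it classically would itself require work. Moreover, inserting $\partial_su_s|_{s=0}=-\Div(u_\tau^n\nabla\phi)$ into the weak first variation produces the term $\int_\Omega F_p\cdot\nabla\Div(u_\tau^n\nabla\phi)\dd x$, which contains $\nabla^2u_\tau^n\,\nabla\phi$; the final expression $\int_\Omega\Non\,\dd x$ involves only $u_\tau^n$ and $\nabla u_\tau^n$, so these second derivatives must cancel, and your sketch does not show how they do while keeping the boundary under control. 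The paper sidesteps both issues at once: it writes $\Phi(X_s{_\#}u_\tau^n)$ via the change of variables $x=X_s(y)$ and the volume distortion $V_s=\det\nabla X_s$, then differentiates under the integral using the identities \eqref{eq:Vsprops}. This produces $\int_\Omega\Non(x,u_\tau^n,\phi)\dd x$ directly, with \emph{no integration by parts, no boundary terms}, and only first derivatives of $u_\tau^n$ appearing throughout — so in fact the paper's derivation of the discrete weak formulation works at the bare $H^1$ level and does not rely on Proposition \ref{prop:addreg} at all. I would recommend you replace your strong-form first-variation computation with that change-of-variables argument.

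A minor further remark: you assert an exact stepwise Euler--Lagrange \emph{identity}, which presupposes that $s\mapsto\W(u_s,u_\tau^{n-1})^2$ is differentiable at $s=0$ with the stated value. The paper only establishes the one-sided upper bound on the difference quotient using the suboptimal plan $T\circ X_s^{-1}$, and obtains the reverse inequality by repeating the argument with $-\phi$; this two-sided inequality is all that is needed for \eqref{eq:dweak} and avoids the differentiability claim.
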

\begin{proof}
We use the so-called \emph{JKO method} \cite{jko1998} (see also \cite{matthes2009,zinsl2012}) introducing a suitable perturbation of the successive minimizers in the scheme \eqref{eq:minmov}. Specifically, for fixed $\phi\in C^\infty(\overline{\Omega})$ with $\partial_\nu\phi=0$ on $\partial\Omega$, we denote by $X_{(\cdot)}:\R_+\times\Omega\to\Omega$ the smooth flow associated to the ordinary differential equation
\begin{align*}
\dot y(s)&=\nabla \phi(y(s)),\quad\text{for }s>0.
\end{align*}
Using the minimality property of $u_\tau^n$, one has for all $s>0$:
\begin{align}\label{eq:miniu}
\begin{split}
0&\le\frac1{s}\left( \Phi_\tau(X_s{_\#}u_\tau^n;u_\tau^{n-1})-\Phi_\tau(u_\tau^n;u_\tau^{n-1})\right)\\&=\frac1{s}\left(\Phi(X_s{_\#}u_\tau^n)-\Phi(u_\tau^{n})\right)+\frac1{2\tau s}\left(\W^2(X_s{_\#}u_\tau^n,u_\tau^{n-1})-\W^2(u_\tau^n,u_\tau^{n-1})\right).
\end{split}
\end{align}
The discrete weak formulation \eqref{eq:dweak} is obtained by passing to the limit as $s\searrow 0$ in \eqref{eq:miniu}. Since $u_\tau^n$ and $u_\tau^{n-1}$ are probability densities, there exists an optimal transport map $T:\R^d\to\R^d$ such that $u_\tau^{n-1}=T_\# u_\tau^n$ (see \cite{villani2003}), so consequently
\begin{align*}
\W^2(X_s{_\#}u_\tau^n,u_\tau^{n-1})-\W^2(u_\tau^n,u_\tau^{n-1})&\le \int_\Omega \left(|X_s-T|^2-|\id -T|^2\right)u_\tau^n\dd x\\&\le \int_\Omega (X_s(x)-x)\cdot(X_s(x)+x-2T(x))u_\tau^n\dd x,
\end{align*} 
using the elementary identity $|a|^2-|b|^2=(a-b)\cdot(a+b)$ for $a,b\in\R^d$ in the last step. By dominated convergence and the definition of $X_s$, one has
\begin{align*}
\lim_{s\searrow 0}\frac1{2\tau s}\left(\W^2(X_s{_\#}u_\tau^n,u_\tau^{n-1})-\W^2(u_\tau^n,u_\tau^{n-1})\right)=-\frac1{\tau}\int_\Omega \nabla\phi(x)\cdot(T(x)-x)u_\tau^n \dd x.
\end{align*}
Using the Taylor expansion $\phi(T(x))-\phi(x)=\nabla \phi(x)\cdot(T(x)-x)+\frac12 (T(x)-x)\cdot\nabla^2\phi(\tilde x)(T(x)-x)$ for some intermediate value $\tilde x$, the right-hand side above can be recast to 
\begin{align*}
-\frac1{\tau}\int_\Omega \nabla\phi(x)\cdot(T(x)-x)u_\tau^n \dd x\le\frac1{\tau}\int_\Omega (u_\tau^n-u_\tau^{n-1})\phi\dd x+\frac1{2\tau}\|\phi\|_{C^2}\W^2(u_\tau^n,u_\tau^{n-1}).
\end{align*}

To pass to the limit in the first term of the r.h.s. in \eqref{eq:miniu}, we introduce the \emph{volume distortion} $V_s:=\det(\nabla X_s)>0$ associated to the vector field $X_s$, for small $s>0$. The following identities hold (see \cite[§2]{matthes2009} for a proof):
\begin{align}\label{eq:Vsprops}
\begin{split}
\frac{\dd}{\dd s}\bigg.\bigg|_{s=0}V_s&=\Delta\phi,\qquad
\frac{\dd}{\dd s}\bigg.\bigg|_{s=0}\left[\nabla\left(\frac{u}{V_s}\circ (X_s)^{-1}\right)\circ X_s\right]=-\nabla u\Delta\phi-u\nabla\Delta\phi-\nabla^2\phi\nabla u.
\end{split}
\end{align}
Using the change of variables $x=X_s(y)$, we can rewrite $\Phi(X_s{_\#}u_\tau^n)$ as follows:
\begin{align*}
\Phi(X_s{_\#}u_\tau^n)&=\int_\Omega F\left(X_s,\frac{u_\tau^n}{V_s},\nabla\left(\frac{u_\tau^n}{V_s}\circ (X_s)^{-1}\right)\circ X_s\right)V_s\dd y.
\end{align*}
By dominated convergence, one consequently has
\begin{align*}
&\lim_{s\searrow 0}\frac1{s}\left(\Phi(X_s{_\#}u_\tau^n)-\Phi(u_\tau^{n})\right)\\&\quad=\int_\Omega \left(\dff_{(x,z,p)}F(x,u_\tau^n,\nabla u_\tau^n)\left[\frac{\dd}{\dd s}\bigg.\bigg|_{s=0}\left(X_s,\frac{u_\tau^n}{V_s},\nabla\left(\frac{u_\tau^n}{V_s}\circ (X_s)^{-1}\right)\circ X_s\right)\right]+F(x,u_\tau^n,\nabla u_\tau^n)\frac{\dd}{\dd s}\bigg.\bigg|_{s=0}V_s\right)\dd y
\\&\quad=\int_\Omega \mathcal{N}(x,u_\tau^n,\phi)\dd x,
\end{align*}
by applying \eqref{eq:Vsprops}.

Putting together, we have proved that
\begin{align*}
0\le \int_\Omega \mathcal{N}(x,u_\tau^n,\phi)\dd x+\frac1{\tau}\int_\Omega (u_\tau^n-u_\tau^{n-1})\phi\dd x+\frac1{2\tau}\|\phi\|_{C^2}\W^2(u_\tau^n,u_\tau^{n-1}).
\end{align*}
Since $\mathcal{N}(x,u_\tau^n,\phi)$ is linear with respect to $\phi$, repeating the calculations above for $-\phi$ in place of $\phi$ yields the converse inequality and hence
\begin{align}\label{eq:elg}
-\frac1{2\tau}\|\phi\|_{C^2}\W^2(u_\tau^n,u_\tau^{n-1})\le\frac1{\tau}\int_\Omega (u_\tau^n-u_\tau^{n-1})\phi\dd x& +\int_\Omega \mathcal{N}(x,u_\tau^n,\phi)\dd x\le\frac1{2\tau}\|\phi\|_{C^2}\W^2(u_\tau^n,u_\tau^{n-1}).
\end{align}
We now introduce a nonnegative temporal test function $\eta\in C^\infty_c((0,\infty))\cap C(\R_+)$, multiply \eqref{eq:elg} with $\tau\eta(n\tau)$ and sum over $n\in\N$ to obtain
\begin{align}\label{eq:elg2}
\begin{split}
-\tau\|\phi\|_{C^2}\|\eta\|_{C^0}\Phi(u_0)&\le\sum_{n\ge 1}\int_\Omega \eta(n\tau)(u_\tau^n-u_\tau^{n-1})\phi\dd x+\sum_{n\ge 1}\int_\Omega \tau\eta(n\tau)\mathcal{N}(x,u_\tau^n,\phi)\dd x\\&\qquad\le \tau\|\phi\|_{C^2}\|\eta\|_{C^0}\Phi(u_0),
\end{split}
\end{align}
using the total square distance estimate \eqref{sumw2} to simplify the last term. Rearranging above as
\begin{align*}
\sum_{n\ge 1}\int_\Omega \eta(n\tau)(u_\tau^n-u_\tau^{n-1})\phi\dd x&=\tau\sum_{n\ge 0}\int_\Omega \frac{\eta(n\tau)-\eta((n+1)\tau)}{\tau} u_\tau^n \phi \dd x,
\end{align*}
and introducing $\eta_\tau(s):=\eta\left(\left\lceil\frac{s}{\tau}\right\rceil\tau\right)$, the discrete weak formulation \eqref{eq:dweak} is obtained by rewriting \eqref{eq:elg2} in spatio-temporal integral form, recalling the definition of the discrete solution $u_\tau$. For sign-changing test functions $\eta$, we decompose into positive and negative part and subtract the respective estimates \eqref{eq:elg2} also to arrive at \eqref{eq:dweak}.
\end{proof}


\section{Passage to continuous time}\label{sec:limit}
In order to pass to the continuous-time limit $\tau\searrow 0$, the following \emph{a priori} estimates are required:
\begin{prop}[A priori estimates]\label{prop:apriori}
For each fixed $T>0$, there exist constants $C_1,C_2>0$ such that for all $\tau\in (0,T)$, the discrete solution $u_\tau$ defined via \eqref{eq:minmov}\&\eqref{eq:discsol} admits the estimates
\begin{align*}
\|u_\tau\|_{L^\infty([0,T];H^1)}&\le C_1\quad\text{and}\quad \|u_\tau\|_{L^2([0,T];H^2)}\le C_2.
\end{align*}
\end{prop}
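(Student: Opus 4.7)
The strategy is to combine the energy monotonicity from Proposition \ref{classicalestimates} with the discrete second-order regularity estimate of Proposition \ref{prop:addreg}, treating the Hessian bound as a telescoping entropy estimate on a uniform time grid.

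\textbf{Step 1 ($L^\infty([0,T];H^1)$ bound).} From \eqref{supE} we have $\Phi(u_\tau^n) \le \Phi(u_0)$ for every $n \in \N$ and every $\tau > 0$. The coercivity estimate \eqref{eq:coerc} of Proposition \ref{prop:energyprop} then yields
\[
\|u_\tau^n\|_{H^1}^2 \le \frac{1}{C_0}\bigl(\Phi(u_0) + C_1\bigr) =: K_1
\]
uniformly in $n$ and $\tau$. Since $u_\tau$ is the piecewise constant interpolant, this gives the first estimate with $C_1 := K_1^{1/2}$, independent of $T$.

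\textbf{Step 2 (summation of the Hessian estimate).} Fix $N := \lceil T/\tau\rceil$ so that $u_\tau(t) = u_\tau^n$ for some $n \le N$ whenever $t \in [0,T]$. Applying Proposition \ref{prop:addreg} to each pair $(u_\tau^{n-1}, u_\tau^n)$, multiplying by $\tau$, and summing $n = 1,\dots,N$ telescopes the entropy contribution:
\[
\sum_{n=1}^N \tau \int_\Omega \|\nabla^2 u_\tau^n\|^2 \dd x \le \frac{1}{\gamma}\!\int_\Omega\!\bigl(u_0 \log u_0 - u_\tau^N \log u_\tau^N\bigr)\dd x + C_3 \sum_{n=1}^N \tau\bigl(\|u_\tau^n\|_{H^1}^2 + 1\bigr).
\]
The last sum is controlled by $C_3(K_1 + 1)\,T$ thanks to Step 1. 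For the entropy terms, Jensen's inequality applied to the convex function $z \mapsto z\log z$ on the unit-mass probability density $u_\tau^N$ gives
\[
\int_\Omega u_\tau^N \log u_\tau^N \dd x \ge -\log \mathcal{L}^d(\Omega),
\]
providing a lower bound independent of $\tau$ and $N$. The remaining term $\int_\Omega u_0 \log u_0 \dd x$ is finite: since $u_0 \in H^1(\Omega)$ and $\Omega$ is a bounded smooth domain, Sobolev embedding gives $u_0 \in L^p(\Omega)$ for some $p > 1$, and $z\log z \le C_p(1 + z^p)$ for $z \ge 0$ yields the bound in terms of $\|u_0\|_{H^1}$. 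Putting these together,
\[
\int_0^{N\tau} \int_\Omega \|\nabla^2 u_\tau(t,x)\|^2 \dd x \dd t \le K_2
\]
for a constant $K_2$ depending only on $u_0$, $\Omega$, $T$, and the structural constants $\gamma, C_3$, but not on $\tau$.

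\textbf{Step 3 (assembling the $L^2([0,T];H^2)$ bound).} Combine Steps 1 and 2: by the standard equivalence $\|u\|_{H^2}^2 \simeq \|u\|_{L^2}^2 + \|\nabla u\|_{L^2}^2 + \|\nabla^2 u\|_{L^2}^2$, and using the uniform $L^\infty([0,T];H^1)$ bound from Step 1 to dominate the first two pieces in $L^2$-time, we obtain
\[
\|u_\tau\|_{L^2([0,T];H^2)}^2 \le T\,K_1 + K_2 =: C_2^2,
\]
which concludes the proof.

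\textbf{Main obstacle.} The delicate point is reconciling the $1/(\gamma\tau)$ prefactor in \eqref{eq:addreg} with the desired $\tau$-independent bound. The telescoping cancellation after multiplying by $\tau$ and summing is exactly what makes this work, and the residual entropy endpoints must be controlled only by intrinsic quantities (the Jensen lower bound, and the $H^1$-integrability of $u_0$ for the upper bound). Once that interplay is noted, the remaining steps are routine.
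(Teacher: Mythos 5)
Your proposal is correct and follows essentially the same route as the paper: a uniform $H^1$ bound from energy monotonicity and coercivity, then telescoping the entropy contributions from Proposition \ref{prop:addreg} after multiplying by $\tau$ and summing, with the $1/(\gamma\tau)$ prefactor cancelling exactly as you note. The only cosmetic difference is in how the entropy endpoints are controlled: you invoke Jensen's inequality for the lower bound and a Sobolev embedding with $z\log z \le C_p(1+z^p)$ for the upper bound, whereas the paper uses the simpler pointwise bound $-1/e \le z\log z \le z^2$ together with the $L^2$-control already contained in the $H^1$ estimate; both choices are valid and lead to the same conclusion.
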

\begin{proof}
Since $\Phi(u_0)$ is finite, the first estimate is an immediate consequence of the energy estimate \eqref{supE} from Proposition \ref{classicalestimates} and \eqref{eq:coerc} from Proposition \ref{prop:energyprop}. For the second estimate, we define $N:=\left\lceil\frac{T}{\tau}\right\rceil\in\N$ and use the additional regularity estimate \eqref{eq:addreg} from Proposition \ref{prop:addreg}:
\begin{align*}
&\int_0^T \|u_\tau\|_{H^2}^2\dd t=\int_0^T \|u_\tau\|_{H^1}^2 \dd t+\int_0^T \|\nabla ^2 u_\tau\|_{L^2}^2 \dd t\\
&\quad\le T\|u_\tau\|_{L^\infty([0,T];H^1)}^2+\sum_{n=1}^N\tau\int_\Omega\|\nabla^2 u_\tau^n\|^2\dd x\\
&\quad\le T\|u_\tau\|_{L^\infty([0,T];H^1)}^2+\sum_{n=1}^N\left[\frac1{\gamma}(\ent(u_\tau^{n-1})-\ent(u_\tau^n))+C_3\tau(\|u_\tau^n\|_{H^1}^2+1)\right].
\end{align*}
Using the elementary estimate $-\frac1{e}\le z\log(z)\le z^2$ for $z\ge 0$ in combination with the uniform estimate on $u_\tau$ in $L^\infty([0,T];H^1(\Omega))$, one obtains (for some constant $\tilde C>0$)
\begin{align*}
\|u_\tau\|_{L^2([0,T];H^2)}\le \tilde C+\frac1{\gamma}\|u_0\|_{L^2}^2+\frac{\mathcal{L}^d(\Omega)}{\gamma e},
\end{align*}
which is a finite value, thanks to the assumptions on $\Omega$ and $u_0$.
\end{proof}
We are now able to identify a candidate for the limit curve:
\begin{prop}[Convergence]\label{prop:cvg}
Let $T>0$, a vanishing sequence of step sizes $\tau_k\searrow 0$ $(k\to\infty)$ and the associated family of discrete solutions $(u_{\tau_k})_{k\in\N}$ be given. There exists a (non-relabelled) subsequence and a limit map 
$u\in C^{1/2}([0,T];(\Prob,\W))\cap L^\infty([0,T];H^1(\Omega)) \cap L^2([0,T];H^2(\Omega))$ such that for $k\to\infty$:
\begin{enumerate}[(a)]
\item $u_{\tau_k}\to u$ uniformly w.r.t. $t\in [0,T]$ in $(\Prob,\W)$.\label{a}
\item $u_{\tau_k}\to u$ in $L^2([0,T];H^1(\Omega))$.\label{b}
\item $u_{\tau_k}(t,\cdot)\to u(t,\cdot)$ in $H^1(\Omega)$ for almost every $t\in [0,T]$, as well as $u_{\tau_k}\to u$ and $\nabla u_{\tau_k}\to \nabla u$ almost everywhere in $[0,T]\times\Omega$.\label{c}
\item $u_{\tau_k}\rightharpoonup u$ in $L^2([0,T];H^2(\Omega))$.\label{d}
\end{enumerate}
\end{prop}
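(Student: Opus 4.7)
The proof will chain three compactness arguments: a metric Arzelà-Ascoli for (a), the extended Aubin-Lions lemma (Theorem \ref{thm:ex_aub}) for (b)-(c), and Banach-Alaoglu in $L^2([0,T];H^2(\Omega))$ for (d). The essential inputs are the uniform Wasserstein Hölder estimate \eqref{w2cont} and the uniform bounds from Proposition \ref{prop:apriori}.

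For (a), I would observe that $(\Prob,\W)$ is a compact metric space since $\Omega$ is bounded, and that \eqref{w2cont} makes $\{u_{\tau_k}\}$ an equi-uniformly continuous family of $\W$-valued curves on $[0,T]$. More precisely, for any $\delta>0$ the finitely many indices with $\tau_k\ge\delta$ are each individually uniformly continuous on $[0,T]$, whereas the remaining tail satisfies $\W(u_{\tau_k}(t),u_{\tau_k}(s))\le\sqrt{2\Phi(u_0)(|t-s|+\delta)}$. The Arzelà-Ascoli theorem for curves in metric spaces (e.g.\ \cite[Prop.\ 3.3.1]{savare2008}) then yields a uniformly $\W$-convergent subsequence, and lower semicontinuity of $\W$ transfers the $1/2$-Hölder bound to the limit, giving $u\in C^{1/2}([0,T];(\Prob,\W))$; in particular $u(t,\cdot)$ remains a probability density for every $t$.

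For (b) and (c), I would apply Theorem \ref{thm:ex_aub} with $\ban=H^1(\Omega)$, with $\mathcal{A}(w):=\|w\|_{H^2}^2$ on $H^2(\Omega)$ (and $+\infty$ otherwise), and with $\dg(w,\tilde w):=\W(w,\tilde w)$ for $w,\tilde w\in\Prob$ (and $+\infty$ otherwise). Sublevels of $\mathcal{A}$ are bounded in $H^2$ and hence relatively compact in $H^1$ by Rellich-Kondrachov, while $\dg$ is lower semicontinuous on $H^1\times H^1$ because $H^1$-convergence implies $L^1$-convergence, hence narrow convergence, under which $\W$ is continuous on $\Prob\times\Prob$ (and pairs leaving $\Prob$ in the limit must have $\dg=+\infty$ eventually). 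Hypothesis \eqref{eq:hypo1} is exactly the $L^2([0,T];H^2)$ bound of Proposition \ref{prop:apriori}, while \eqref{eq:hypo2} follows from $\int_0^{T-h}\W(u_{\tau_k}(t+h),u_{\tau_k}(t))\dd t\le T\sqrt{2\Phi(u_0)(h+\tau_k)}$ with the same $\tau_k$-tail splitting as above. This yields a subsequence converging in $t$-measure in $H^1(\Omega)$, and the uniform $L^\infty([0,T];H^1)$ bound (again Proposition \ref{prop:apriori}) upgrades this by dominated convergence to strong convergence in $L^2([0,T];H^1(\Omega))$, which is (b). A further extraction provides $H^1(\Omega)$-convergence for a.e.\ $t\in[0,T]$ and, from the resulting strong $L^2([0,T]\times\Omega)$-convergence of both $u_{\tau_k}$ and $\nabla u_{\tau_k}$, pointwise a.e.\ convergence on $[0,T]\times\Omega$, which is (c).

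Finally, (d) follows from Banach-Alaoglu applied in the reflexive space $L^2([0,T];H^2(\Omega))$ to the uniformly bounded family of Proposition \ref{prop:apriori}: a weakly convergent subsubsequence exists, and its limit must equal $u$ by uniqueness, since (b) already identifies the strong limit in $L^2([0,T];L^2(\Omega))$. The inclusion $u\in L^\infty([0,T];H^1(\Omega))$ follows from lower semicontinuity of $\|\cdot\|_{H^1}$ under the a.e.-$t$ convergence of (c), combined with the uniform $L^\infty([0,T];H^1)$ bound. The main technical care throughout consists in bookkeeping the successive extractions; compatibility is guaranteed by the chain of implications "$H^1$-convergence $\Rightarrow$ $L^1$-convergence $\Rightarrow$ narrow convergence $\Rightarrow$ $\W$-convergence" on $\Prob$, so that a single diagonal subsequence realises all four statements (a)-(d) simultaneously.
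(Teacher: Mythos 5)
Your proposal is correct and follows the same strategy as the paper's proof: the refined Arzelà--Ascoli theorem of \cite[Prop.~3.3.1]{savare2008} for (a), the extended Aubin--Lions lemma (Theorem \ref{thm:ex_aub}) with the identical choices $\ban=H^1(\Omega)$, $\mathcal{A}=\|\cdot\|_{H^2}^2$, $\dg=\W$ for (b)--(c), and Banach--Alaoglu for (d). The only (cosmetic) difference is in upgrading the convergence in $t$-measure in $H^1(\Omega)$ to strong $L^2([0,T];H^1(\Omega))$ convergence: the paper passes through norm convergence plus weak convergence via the Radon--Riesz theorem, whereas you apply dominated (or bounded) convergence directly to $\|u_{\tau_k}(t)-u(t)\|_{H^1}^2$ using the uniform $L^\infty_t H^1$ bound --- both routes are valid and equally short.
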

\begin{proof}
Part \eqref{a} is an immediate consequence of the H\"older-type estimate \eqref{w2cont} from Proposition \ref{classicalestimates} and a refined version of the Arzelà-Ascoli theorem \cite[Prop. 3.3.1]{savare2008}. The weak convergence \eqref{d} is obtained from Alaoglu's theorem and Proposition \ref{prop:apriori}. It remains to prove the strong convergence \eqref{b} which yields \eqref{c} by extraction of suitable subsequences. For the proof of \eqref{b}, we apply Theorem \ref{thm:ex_aub} to the family $(u_{\tau_k})_{k\in\N}$. Let $\ban:=H^1(\Omega)$ and define $\mathcal{A}:\ban\to[0,\infty]$ with 
\begin{align*}
\mathcal{A}(\rho):=\begin{cases}\|\rho\|_{H^2}^2,&\text{if }\rho\in H^2(\Omega),\\ +\infty,&\text{otherwise,}\end{cases}
\end{align*}
which has relatively compact sublevels in $\ban$ by the Rellich-Kondrachov compactness theorem.
Furthermore, define $\dg:\ban\times\ban\to[0,\infty]$ by
\begin{align*}
\dg(\rho,\tilde\rho):=\begin{cases}\W(\rho,\tilde\rho),&\text{if }\rho,\tilde\rho \text{ are probability densities on }\Omega,\\ +\infty,&\text{otherwise,}\end{cases}
\end{align*}
which is an admissible choice for the application of Theorem \ref{thm:ex_aub}.
Using the technique from \cite{zinsl2012}, one verifies that hypothesis \eqref{eq:hypo2} holds, thanks to the estimates from Proposition \ref{classicalestimates}. Since hypothesis \eqref{eq:hypo1} coincides with the second part of Proposition \ref{prop:apriori}, we conclude with Theorem \ref{thm:ex_aub} that (on a subsequence) $u_{\tau_k}(t,\cdot)$ converges to $u(t,\cdot)$ in $H^1(\Omega)$, in measure with respect to $t\in(0,T)$. Using the uniform estimate in $L^\infty([0,T];H^1(\Omega))$ from Proposition \ref{prop:apriori}, one has $\|u_{\tau_k}\|_{L^2([0,T];H^1)}\rightarrow \|u\|_{L^2([0,T];H^1)}$ as $k\to\infty$ by dominated convergence. With weak convergence and the Radon-Riesz theorem, we consequently have \eqref{b}.
\end{proof}
To complete the proof of Theorem \ref{thm:existF}, it remains to verify that $u$ is a weak solution to \eqref{PDE00}.
\begin{prop}[Continuous-time limit]\label{eq:disccontweak}
Let $(\tau_k)_{k\in\N}$ be the subsequence on which the convergence properties from Proposition \ref{prop:cvg} hold. Then,
\begin{align}
\lim_{k\to\infty}&\int_0^\infty\int_\Omega \left(u_{\tau_k}(t,x)\frac{\eta_{\tau_k}(t+\tau_k)-\eta_{\tau_k}}{\tau_k}-u(t,x)\partial_t\eta(t)\right)\phi(x)\dd x\dd t=0,\label{eq:convdt}\\
\lim_{k\to\infty}&\int_0^\infty\int_\Omega \bigg(\mathcal{N}(x,u_{\tau_k}(t,x),\phi(x))\eta_{\tau_k}(t)-\mathcal{N}(x,u(t,x),\phi(x))\eta(t)\bigg)\dd x\dd t=0.\label{eq:convN}
\end{align}
Moreover, for almost every $t\in [0,T]$, one has
\begin{align}
\lim_{k\to\infty}&\Phi(u_{\tau_k}(t,\cdot))=\Phi(u(t,\cdot)).\label{eq:conven}
\end{align}
\end{prop}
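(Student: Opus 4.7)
My plan is to treat the three limits separately, exploiting pointwise a.e.\ convergence and equi-integrability afforded by Proposition \ref{prop:cvg} together with the polynomial growth bounds of Assumption \ref{ass}\eqref{ass3}. Throughout, choose $T>0$ large enough that $\operatorname{supp}\eta\subset(0,T-1)$; then for all sufficiently small $\tau_k$, $\eta_{\tau_k}$ vanishes outside $[0,T]$ and the a priori estimates of Proposition \ref{prop:apriori} control all relevant norms on that interval.

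\emph{Step 1 (proof of \eqref{eq:convdt}).} Split the integrand as
\begin{equation*}
u_{\tau_k}\,\frac{\eta_{\tau_k}(t+\tau_k)-\eta_{\tau_k}(t)}{\tau_k}-u\,\partial_t\eta = (u_{\tau_k}-u)\,\partial_t\eta + u_{\tau_k}\!\left(\frac{\eta_{\tau_k}(t+\tau_k)-\eta_{\tau_k}(t)}{\tau_k}-\partial_t\eta\right).
\end{equation*}
The first summand, integrated against $\phi$, tends to zero by strong $L^2([0,T]\times\Omega)$ convergence from Proposition \ref{prop:cvg}\eqref{b} combined with the boundedness of $\partial_t\eta\cdot\phi$. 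For the second, an elementary Taylor estimate on the smooth, compactly supported $\eta$ shows that its difference quotient converges \emph{uniformly} to $\partial_t\eta$ on $[0,T]$ as $\tau_k\searrow 0$; combined with the uniform mass bound $\|u_{\tau_k}(t,\cdot)\|_{L^1}=1$ and $\phi\in L^\infty$, this contribution also vanishes.

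\emph{Step 2 (proof of \eqref{eq:convN}).} This is the technical heart. By Proposition \ref{prop:cvg}\eqref{c}, after extracting a further subsequence we have $u_{\tau_k}\to u$ and $\nabla u_{\tau_k}\to\nabla u$ pointwise almost everywhere on $[0,T]\times\Omega$. Continuity of $F_x, F_z, F_p$ from Assumption \ref{ass} together with smoothness of $\phi$ and the uniform convergence $\eta_{\tau_k}\to\eta$ then yield pointwise a.e.\ convergence of the integrand $\mathcal{N}(x,u_{\tau_k},\phi)\eta_{\tau_k}$ to $\mathcal{N}(x,u,\phi)\eta$. For equi-integrability, I would inspect each of the five summands constituting $\mathcal{N}$ and show, using Assumption \ref{ass}\eqref{ass3}, that each is dominated by a constant depending on $\|\phi\|_{C^3}$ times $1+u_{\tau_k}^2+|\nabla u_{\tau_k}|^2$. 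The delicate summand is $u_{\tau_k}F_p\cdot\nabla\Delta\phi$, where the bound $|F_p|\le\sqrt{D(|\nabla u_{\tau_k}|^2+1)}$ must be combined with the factor $u_{\tau_k}$ via Young's inequality $u_{\tau_k}\sqrt{|\nabla u_{\tau_k}|^2+1}\le\tfrac12(u_{\tau_k}^2+|\nabla u_{\tau_k}|^2+1)$. Since $u_{\tau_k}\to u$ strongly in $L^2([0,T];H^1(\Omega))$ by Proposition \ref{prop:cvg}\eqref{b}, the dominating sequence $1+u_{\tau_k}^2+|\nabla u_{\tau_k}|^2$ converges strongly in $L^1([0,T]\times\Omega)$ and is therefore equi-integrable. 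Vitali's convergence theorem concludes \eqref{eq:convN}.

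\emph{Step 3 (proof of \eqref{eq:conven}).} Fix $t$ for which $u_{\tau_k}(t,\cdot)\to u(t,\cdot)$ strongly in $H^1(\Omega)$; by Proposition \ref{prop:cvg}\eqref{c} this holds a.e. A further subsequence converges pointwise a.e.\ in $\Omega$ in values and gradients, so continuity of $F$ yields pointwise a.e.\ convergence of $F(x,u_{\tau_k},\nabla u_{\tau_k})$. The bound $F(x,z,p)\le C(|p|^2+1)$ is dominated by the $L^1(\Omega)$-convergent sequence $|\nabla u_{\tau_k}(t,\cdot)|^2+1$, whence Vitali's theorem gives $\Phi(u_{\tau_k}(t,\cdot))\to\Phi(u(t,\cdot))$. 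The principal obstacle lies in Step 2: one must verify term by term that every summand in $\mathcal{N}$ admits an $L^1$-equi-integrable majorant, with the mixed term involving $u\,\nabla\Delta\phi$ forced through Young's inequality since the growth of $F_p$ is only of order $\sqrt{|p|^2+1}$.
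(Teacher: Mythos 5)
Your proposal is correct and follows essentially the same route as the paper: uniform convergence of the difference quotient for \eqref{eq:convdt}, Vitali's theorem driven by a.e.\ pointwise convergence and an $L^1$-equi-integrable majorant built from the growth bounds in Assumption~\ref{ass}\eqref{ass3} for \eqref{eq:convN} and \eqref{eq:conven}. One genuine refinement on your part is worth keeping: the paper states the majorant for $\mathcal{N}$ as $\tilde C\|\phi\|_{C^2}(1+|\nabla\rho|^2)$, but the summand $\rho\, F_p\cdot\nabla\Delta\phi$ requires $\|\phi\|_{C^3}$ and, since $|F_p|\lesssim\sqrt{1+|p|^2}$, also contributes a factor $\rho$ that is not absorbed by $|\nabla\rho|^2$ alone. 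Your Young-inequality step producing the majorant $1+u_{\tau_k}^2+|\nabla u_{\tau_k}|^2$ is the correct fix; since $u_{\tau_k}\to u$ strongly in $L^2([0,T];H^1(\Omega))$ gives $L^1$-convergence (hence equi-integrability) of $u_{\tau_k}^2$ as well as of $|\nabla u_{\tau_k}|^2$, Vitali applies exactly as you describe. A cosmetic point in Step~3: no subsequence extraction is needed once you fix a $t$ outside the exceptional null set from Proposition~\ref{prop:cvg}\eqref{c}, since that item already delivers a.e.\ pointwise convergence of values and gradients on $[0,T]\times\Omega$; if you do extract a $t$-dependent subsequence, you should close the argument by invoking the usual ``every subsequence has a convergent sub-subsequence with the same limit'' criterion, as the paper does.
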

\begin{proof}
Since $\frac1{\tau_k}({\eta_{\tau_k}(\cdot+\tau_k)-\eta_{\tau_k}})$ converges to $\partial_t\eta$ uniformly, \eqref{eq:convdt} is an immediate consequence of the convergence properties stated in Proposition \ref{prop:cvg}. For the proof of \eqref{eq:convN}, we first note that
\begin{align*}
|\mathcal{N}(x,\rho,\phi)|\le \tilde C\|\phi\|_{C^2}(1+|\nabla\rho|^2)\qquad\forall x\in\Omega,
\end{align*}
for some constant $\tilde C>0$, thanks to the bounds on $F$ from Assumption \ref{ass}\eqref{ass2}\&\eqref{ass3}. By Proposition \ref{prop:cvg}, we have $\mathcal{N}(x,u_{\tau_k}(t,x),\phi(x))\eta_{\tau_k}(t)\rightarrow\mathcal{N}(x,u(t,x),\phi(x))\eta(t)$ pointwise a.e. on $[0,T]\times\Omega$. Convergence of the integral is obtained by Vitali's convergence theorem as follows: Let $\eps>0$. Since $u_{\tau_k}\to u$ in $L^2([0,T];H^1(\Omega))$, there exists $\delta\in \left(0,\frac{\eps}{2\tilde C\|\phi\|_{C^2}\|\eta\|_{C^0}}\right)$ such that for all Borel sets $A\subset [0,T]\times\Omega$ with $\mathcal{L}^{d+1}(A)\le \delta$ and all $k\in\N$, one has
\begin{align*}
\iint_A |\nabla u_{\tau_k}|^2\dd x\dd t&\le \frac{\eps}{2\tilde C\|\phi\|_{C^2}\|\eta\|_{C^0}}.
\end{align*}
Consequently, for those $A$ and all $k\in\N$,
\begin{align*}
\iint_A |\mathcal{N}(x,u_{\tau_k}(t,x),\phi(x))\eta_{\tau_k}(t)|\dd x\dd t&\le \tilde C\|\phi\|_{C^2}\|\eta\|_{C^0}\left(\delta+\frac{\eps}{2\tilde C\|\phi\|_{C^2}\|\eta\|_{C^0}}\right)\le \eps,
\end{align*}
which allows us to apply Vitali's convergence theorem to the family $(\mathcal{N}(\cdot,u_{\tau_k},\phi)\eta_{\tau_k})_{k\in\N}$. The proof of \eqref{eq:conven} can be obtained by a similar argument: Starting from the first part of \eqref{c} from Proposition \ref{prop:cvg}, we deduce that for every subsequence of $(\tau_k)_{k\in\N}$, there exists a sub-subsequence $(\tau_{\tilde k})_{\tilde k\in\N}$ on which $\Phi(u_{\tau_{\tilde k}})$ converges to $\Phi(u)$ as $\tilde k\to\infty$. For the convergence, we employ Vitali's theorem in almost the same way as above; the necessary uniform integrability is a consequence of Assumption \ref{ass}\eqref{ass2}.
\end{proof}
Obviously, \eqref{eq:convdt}\&\eqref{eq:convN} imply that $u$ satisfies the weak formulation \eqref{weakform} in Theorem \ref{thm:existF}. The remaining assertion on the monotonicity of $\Phi(u(t,\cdot))$ w.r.t. $t$ is an immediate consequence of the discrete estimate \eqref{supE} and the convergence \eqref{eq:conven}.

\section{Extension to non-convex Lagrangians}\label{sec:f}
In this section, we sketch the possible extension of the strategy described above to the case covered by Assumption \ref{assf}. For the sake of brevity, we skip most of the technical details. Recall that the free energy is defined as
\begin{align*}
\Phi(u)=\int_\Omega \frac12|\nabla f(u)|^2\dd x,
\end{align*}
if $u\in\Prob$ and $f(u)\in H^1(\Omega)$; and $\Phi(u)=+\infty$ otherwise.

We first summarize several properties on the nonlinearity $f$ which are elementary consequences of Assumption \ref{assf}:
\begin{lem}[Properties of $f$]\label{lem:propf}
The following statements hold:
\begin{enumerate}[(a)]
\item With $C$ and $\alpha$ from Assumption \ref{assf}, one has $f(z)\ge\frac{C}{\alpha}z^\alpha$ for all $z\ge 0$.
\item There exists a constant $C_0>0$ such that $f(z)\le C_0(z+1)$.
\item There exists a constant $C_1>0$ such that $zf'(z)\le C_1(f(z)+1)$ for all $z>0$.
\end{enumerate}
\end{lem}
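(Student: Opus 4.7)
The plan is to derive all three bounds directly from the basic hypotheses in Assumption \ref{assf}: namely $f(0)=0$, the pointwise lower bound $f'(z)\ge Cz^{\alpha-1}$, and the strict concavity $f''<0$ on $(0,\infty)$. Each part is essentially a one-line consequence of one of these three inputs.

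For part (a), I would simply integrate the pointwise lower bound on $f'$: by the fundamental theorem of calculus applied to $f'(s)\ge Cs^{\alpha-1}$ on $(0,z)$, and using $f(0)=0$, one obtains $f(z)=\int_0^z f'(s)\dd s\ge \int_0^z Cs^{\alpha-1}\dd s=\frac{C}{\alpha}z^\alpha$. The integral is finite since $\alpha>0$, and the same inequality extends to $z=0$ trivially.

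For part (b), the key observation is that $f$ is concave on $[0,\infty)$ (strictly so on $(0,\infty)$ by $f''<0$, and the endpoint $0$ is included via continuity, which itself follows from the integrability of $f'$ near zero). Hence $f$ lies below every tangent line; evaluating the tangent at $z=1$ gives $f(z)\le f(1)+f'(1)(z-1)=f'(1)\,z+\bigl(f(1)-f'(1)\bigr)$ for all $z\ge 0$. Setting $C_0:=|f(1)|+|f'(1)|+1$ then yields $f(z)\le C_0(z+1)$.

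For part (c), I would use concavity in the opposite direction: at each $z>0$, the tangent line at $z$ lies \emph{above} the graph, so evaluating it at the origin gives $0=f(0)\le f(z)+f'(z)(0-z)=f(z)-zf'(z)$, i.e.\ $zf'(z)\le f(z)$. Thus $C_1=1$ suffices, the additive constant in the stated inequality being only slack.

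None of the three parts presents a genuine obstacle — they are elementary consequences of concavity, the normalization $f(0)=0$, and the pointwise bound on $f'$. The only point requiring a brief comment is the continuity of $f$ at $0$ needed for the concavity argument in (b) to extend down to $z=0$; this follows at once from $f(z)=\int_0^z f'(s)\dd s$ being well-defined thanks to $\alpha>0$.
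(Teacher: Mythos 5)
Your proof is correct, and since the paper states these properties without supplying a proof (presenting them as elementary consequences of Assumption~\ref{assf}), there is no alternative argument to compare against. Integrating the pointwise lower bound $f'(z)\ge Cz^{\alpha-1}$ for (a), and exploiting the concave tangent-line bound — once below the graph at $z=1$ for (b), once above the origin for (c) — is the natural route, and in (c) you correctly observe that $C_1=1$ suffices, so the ``$+1$'' in the statement is indeed slack.

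One small remark on the final paragraph: the fact that $\alpha>0$ makes $\int_0^z Cs^{\alpha-1}\dd s$ finite ensures only that the \emph{lower bound} in (a) is finite; it does not by itself establish integrability of $f'$ near $0$ (a lower bound on $f'$ can never do that), and hence cannot be what guarantees continuity of $f$ at the origin. Rather, continuity of $f$ at $0$ with $f(0)=0$ must be read as part of the hypothesis in Assumption~\ref{assf}(i): it is what gives the representation $f(z)=\int_0^z f'(s)\dd s$ that your proofs of (a) and (c) rely on. With that reading, all three parts go through exactly as you wrote them.
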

We are now in position to construct a discrete solution via the minimizing movement scheme \eqref{eq:minmov}:
\begin{prop}[Minimizing movement scheme]\label{prop:mmovf}
If $\tau>0$ and $v\in\Prob$ with $f(v)\in H^1(\Omega)$, then there exists a minimizer $u\in \Prob$ of $\Phi_\tau(\cdot;v)$ satisfying $f(u)\in H^1(\Omega)$.
\end{prop}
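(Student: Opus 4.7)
The plan is the direct method of the calculus of variations. The nontrivial point compared to Proposition~\ref{prop:mmov} is that Assumption~\ref{assf} no longer provides direct $H^1$-coercivity for $u$ itself, but only for the transformed quantity $f(u)$; consequently, one has to argue on the level of $f(u_n)$ and invert $f$ at the end, where the Sobolev-exponent restriction on $\alpha$ will be crucial.

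First I fix an infimizing sequence $(u_n)_n\subset\Prob$ for $\Phi_\tau(\cdot;v)$. The upper bound on $\Phi(u_n)$ yields $\|\nabla f(u_n)\|_{L^2}\le M$, and combining the $L^1$-bound $\|f(u_n)\|_{L^1}\le C_0(1+\mathcal{L}^d(\Omega))$ from Lemma~\ref{lem:propf}(b) (using $\|u_n\|_{L^1}=1$) with Poincar\'e's inequality produces a uniform bound on $f(u_n)$ in $H^1(\Omega)$. Alaoglu and Rellich--Kondrachov then furnish a subsequence with $f(u_n)\rightharpoonup g$ in $H^1(\Omega)$, $f(u_n)\to g$ strongly in $L^2(\Omega)$, and pointwise almost everywhere. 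Since $f'(z)\ge Cz^{\alpha-1}>0$ on $(0,\infty)$ and $f(0)=0$, the map $f$ is a homeomorphism of $[0,\infty)$ onto its image, so $u_n\to u:=f^{-1}(g)$ pointwise a.e.\ and $g=f(u)$.

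The heart of the argument is to promote this pointwise convergence to $L^1$-convergence, thereby ensuring $\int_\Omega u\dd x=1$ so that $u\in\Prob$. By Lemma~\ref{lem:propf}(a), $u_n^\alpha\le(\alpha/C)f(u_n)$, so $(u_n)_n$ is bounded in $L^{\alpha p^*}(\Omega)$ for any exponent $p^*$ realized by the Sobolev embedding $H^1(\Omega)\hookrightarrow L^{p^*}(\Omega)$. The hypothesis $\alpha>\bigl[\tfrac12-\tfrac1d\bigr]_+$ is precisely what makes it possible to select such $p^*$ with $\alpha p^*>1$ (in dimensions $d\le 2$ any $p^*<\infty$ is admissible). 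Boundedness in $L^q$ with $q>1$ implies uniform integrability of $(u_n)_n$, and Vitali's convergence theorem combined with the a.e.\ convergence yields $u_n\to u$ in $L^1(\Omega)$, hence $u\in\Prob$.

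Finally, lower semicontinuity of $\Phi_\tau(\cdot;v)$ at $u$ follows from two standard facts: the Wasserstein term is continuous along narrowly convergent sequences on the bounded set $\Omega$, and $L^1$-convergence of densities implies narrow convergence of the measures; and weak $H^1$-convergence $f(u_n)\rightharpoonup f(u)$ makes $\rho\mapsto\|\nabla\rho\|_{L^2}^2$ weakly lower semicontinuous, so $\Phi(u)\le\liminf_n\Phi(u_n)$. Thus $u$ attains the infimum, and $f(u)\in H^1(\Omega)$ by construction. The main obstacle is the uniform-integrability step of the third paragraph: without the Sobolev-exponent condition on $\alpha$, a minimizing sequence of probability densities could lose mass in the limit, and existence of a minimizer inside $\Prob$ could not be guaranteed.
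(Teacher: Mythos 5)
Your proof is correct and follows the same overall direct-method strategy as the paper (bound $f(u_n)$ in $H^1$ via Lemma~\ref{lem:propf} and Poincar\'e, extract a weakly/a.e.\ convergent subsequence, invert $f$, then use weak lower semicontinuity of the Dirichlet integral together with $\W$-continuity). The one place where you diverge is how you identify the limit as a genuine probability density. The paper invokes Prokhorov to get narrow convergence $u_k\to u$ in $(\Prob,\W)$ and then asserts ``Clearly, $u=f^{-1}(w)$,'' leaving implicit the justification that the narrow limit has no singular part and that its density is indeed $f^{-1}(w)$. You instead derive a uniform $L^q$-bound on $(u_n)$ with $q=\alpha p^*>1$ by combining Lemma~\ref{lem:propf}(a) with the Sobolev embedding $H^1\hookrightarrow L^{p^*}$, which gives uniform integrability, and then use Vitali's theorem together with a.e.\ convergence to obtain $u_n\to u$ in $L^1(\Omega)$. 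This is a tighter argument: it makes visible exactly where the lower bound $\alpha>[\tfrac12-\tfrac1d]_+$ from Assumption~\ref{assf} enters (it is precisely what guarantees $\alpha p^*>1$ and thus rules out loss of mass along the infimizing sequence), whereas in the paper this restriction is only exploited explicitly later, in the $L^p$-bounds of Proposition~\ref{prop:dweakf}. Your version buys a fully self-contained identification of the minimizer; the paper's version is shorter but relies on the reader to supply the same uniform-integrability reasoning behind ``Clearly.''
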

\begin{proof}
Let an infimizing sequence $(u_k)_{k\in\N}$ for $\Phi_\tau(\cdot,v)$ be given. Since $\Phi$ is nonnegative, $\Phi_\tau(u_k;v)$ is bounded; hence
$$\W(u_k,v)\le C\quad\text{and}\quad \Phi(u_k)\le C$$
for some $C>0$. Using Poincaré's inequality together with Lemma \ref{lem:propf} yields $\|f(u_k)\|_{H^1}\le \tilde C$ since 
\begin{align*}
\mathcal{L}(\Omega)^{-1}\int_\Omega f(u_k)\dd x\le C_0 \mathcal{L}(\Omega)^{-1}\int_\Omega (u_k+1)\dd x =C_0(\mathcal{L}(\Omega)^{-1}+1).
\end{align*}
Using Alaoglu's theorem in combination with the Rellich-Kondrachov compactness theorem, one has (on a non-relabelled subsequence) that $w_k:=f(u_k)$ converges to some $w$ weakly in $H^1(\Omega)$ as well as strongly in $L^2(\Omega)$ and pointwise almost everywhere on $\Omega$. By Prokhorov's theorem, $u_k\to u$ in $(\Prob,\W)$, on a further subsequence. Clearly, $u=f^{-1}(w)$, so $f(u_k)\rightharpoonup f(u)$ in $H^1(\Omega)$. By weak lower semicontinuity of $\Phi_\tau(\cdot;v)$ (following by weak lower semicontinuity of $\frac12\|\nabla w\|^2_{L^2}$), it is easy to conclude that $u$ is a minimizer.
\end{proof}
As before, the minimizer gains in regularity:
\begin{prop}[Additional regularity]\label{prop:addregf}
Let $\tau>0$ and $v\in \Prob$ with $f(v)\in H^1(\Omega)$. Then, a minimizer $u\in \Prob$ of $\Phi_\tau(\cdot;v)$ admits the estimate
\begin{align}
\int_\Omega \|\nabla^2 f(u)\|^2\dd x&\le \frac{1}{\delta\tau}\int_\Omega(v\log (v)-u\log(u))\dd x,\label{eq:addregf}
\end{align}
where $\delta>0$ is a constant depending only on the structure of $f$ and the spatial dimension $d$.
\end{prop}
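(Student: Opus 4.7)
The plan is to apply the flow interchange lemma (Theorem \ref{thm:flowinterchange}) with the auxiliary functional equal to Boltzmann's entropy $\ent(v)=\int_\Omega v\log v\,\dd x$, whose associated $0$-flow $\flowS^\ent$ is precisely the heat semigroup with homogeneous Neumann boundary conditions, see \eqref{eq:heat}--\eqref{eq:heatbc}. Provided that $\operatorname{Dom}(\Phi)\subset\operatorname{Dom}(\ent)$ -- which I would verify from the lower bound $f(z)\ge (C/\alpha)z^\alpha$ in Lemma \ref{lem:propf} together with the Sobolev embedding $H^1(\Omega)\hookrightarrow L^q(\Omega)$ and the lower threshold imposed on $\alpha$ in Assumption \ref{assf}(i) -- the flow interchange identity reads
\begin{align*}
\ent(u)+\tau\,\mathfrak{D}^\ent\Phi(u)\le\ent(v).
\end{align*}
Consequently, \eqref{eq:addregf} will follow from the pointwise dissipation bound $\mathfrak{D}^\ent\Phi(u)\ge\delta\int_\Omega\|\nabla^2 f(u)\|^2\,\dd x$ for some $\delta=\delta(\bar\delta,d)>0$.

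To obtain this bound, I write $u_s:=\flowS_s^\ent(u)$, which is smooth and strictly positive for $s>0$ and satisfies $\partial_s u_s=\Delta u_s$ together with $\partial_\nu u_s=0$ (and hence $\partial_\nu f(u_s)=0$). Differentiating $\Phi(u_s)=\tfrac12\int_\Omega|\nabla f(u_s)|^2\,\dd x$ and integrating by parts yields
\begin{align*}
-\frac{\dd}{\dd s}\Phi(u_s)=\int_\Omega \Delta f(u_s)\cdot f'(u_s)\Delta u_s\,\dd x,
\end{align*}
which, via the identity $f'(u)\Delta u=\Delta f(u)-f''(u)|\nabla u|^2$, decomposes into the favorable square $\int(\Delta f(u_s))^2\,\dd x$ and an indefinite cross term involving $f''<0$. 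Passing to the variable $w_s:=f(u_s)$ via $\nabla u_s=\nabla w_s/f'(u_s)$ and the corresponding chain-rule formula for $\nabla^2 u_s$, I would reorganize the cross term by systematic integration by parts (in the spirit of \cite{juengel2000,gianazza2009,matthes2009}) into a quadratic form in the entries of $\nabla^2 w_s$ whose coefficients depend only on $d$ and on the dimensionless ratio $R(u_s):=f'''(u_s)f'(u_s)/f''(u_s)^2$, plus boundary contributions of the form $\int_{\partial\Omega}\nabla w_s\cdot\nabla^2 w_s\,\nu\,\dS$ that are nonpositive by convexity of $\Omega$ (Lemma \ref{lem:boundary}).

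Assumption \ref{assf}(iii) -- namely $R(z)\ge\bar\delta+1-d/2+\tfrac12\sqrt{d^2+8d}$ -- is the algebraic threshold that renders the resulting full quadratic form in $\nabla^2 w_s$ coercive, yielding a constant $\delta=\delta(\bar\delta,d)>0$ with
\begin{align*}
-\frac{\dd}{\dd s}\Phi(u_s)\ge\delta\int_\Omega\|\nabla^2 f(u_s)\|^2\,\dd x;
\end{align*}
taking $\liminf_{s\searrow 0}$ and invoking lower semicontinuity of $\|\nabla^2 f(\cdot)\|_{L^2}$ under weak $H^1$-convergence completes the argument. I expect the main obstacle to lie precisely in this algebraic step: all second-order contributions of $w_s$, including the mixed expressions $\nabla w_s\cdot\nabla^2 w_s\,\nabla w_s$ produced by the cross term, must be rearranged so that the remainder is a nonnegative quadratic form in \emph{all} entries of $\nabla^2 w_s$ rather than merely in $\Delta w_s$. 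This is exactly the computation in which the dimension-dependent constant of Assumption \ref{assf}(iii) originates, and where the nonstandard (non-$\lambda$-convex) nature of the Lagrangian from \eqref{eq:speziell} manifests itself.
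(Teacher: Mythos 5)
Your strategy coincides exactly with the paper's: flow interchange with Boltzmann's entropy along the Neumann heat flow, the product-rule decomposition $\Delta f_s = f''_s|\nabla u_s|^2 + f'_s\Delta u_s$ to split the dissipation into $\int(\Delta f_s)^2$ plus a cross term, boundary terms killed via Lemma~\ref{lem:boundary}, and Assumption~\ref{assf}(iii) as the coercivity threshold. Your observation that $\operatorname{Dom}(\Phi)\subset\operatorname{Dom}(\ent)$ must be checked (via Lemma~\ref{lem:propf}, Sobolev embedding, and the lower bound on $\alpha$) is a valid point that the paper glosses over.

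However, the step you flag as ``the main obstacle'' is not merely an obstacle --- it is the entire content of the proof, and you leave it unexecuted. Simply saying the cross term ``must be rearranged so that the remainder is a nonnegative quadratic form in all entries of $\nabla^2 w_s$'' and that Assumption~\ref{assf}(iii) ``is the algebraic threshold that renders the resulting full quadratic form coercive'' does not establish \emph{that} this rearrangement is possible, nor \emph{why} that particular dimension-dependent constant is the right one. The paper's resolution has four concrete ingredients you do not supply: (1) an integral identity produced by Gau\ss' theorem applied to $\nabla\cdot(H(f_s)|\nabla f_s|^2\nabla f_s)$, where $H(f_s)=-f''_s/(f'_s)^2$, which trades the troublesome $\int H\,\Delta f_s\,|\nabla f_s|^2$ term against $\int H\,\nabla f_s\cdot\nabla^2 f_s\nabla f_s$ and $\int L\,|\nabla f_s|^4$ with $L=-\frac{f'''f'+2(f'')^2}{(f')^5}$; (2) the orthogonal decomposition $\nabla^2 f_s = \frac{\Delta f_s}{d}\Id + R$ into trace and traceless parts, giving $\|\nabla^2 f_s\|^2 = \frac{(\Delta f_s)^2}{d}+\|R\|^2$; (3) the free parameter $\chi=\sqrt{d/(d+8)}$ used to split the Gau\ss\ identity; and (4) Lemma~\ref{lem:traceless}, a binomial-type inequality for traceless symmetric matrices, which is what makes the quadratic form in $(R,\nabla f_s\otimes\nabla f_s)$ nonnegative after Young's inequality. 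It is precisely the combination of these steps that produces the constant $-1-\frac{d}{2}+\frac12\sqrt{d^2+8d}$ appearing in Assumption~\ref{assf}(iii). Without them the coercivity claim is an assertion, not a proof.

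A smaller point: the limit passage $s\searrow 0$ needs slightly more care than ``lower semicontinuity under weak $H^1$-convergence''; the relevant quantity is $\|\nabla^2 f(u_s)\|_{L^2}$, so one actually argues via boundedness of the dissipation and weak $H^2$-convergence of $f(u_s)$, exactly as the paper does for the analogous passage in the proof of Proposition~\ref{prop:dweakf}.
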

\begin{proof}
We proceed as in the proof of Proposition \ref{prop:addreg}. Recall the definition of the heat flow $\flowS^\ent$ from \eqref{eq:heat}\&\eqref{eq:heatbc}. Clearly, estimate \eqref{eq:addregf} is obtained via the flow interchange lemma (Theorem \ref{thm:flowinterchange}) if the dissipation along the heat flow admits the estimate
\begin{align}\label{eq:dsp0}
\mathfrak{D}^\ent\Phi(u)&\ge \delta\int_\Omega\|\nabla^2 f(u)\|^2\dd x.
\end{align}
Denote, for brevity, $f_s:=f(\flowS^\ent_s(u))$ (and analogously $f'_s$ etc.) and $u_s:=\flowS^\ent_s(u)$, for $s>0$. Since $u_s$ is smooth and strictly positive, one obtains
\begin{align*}
\mathfrak{D}_s:=-\frac{\dd}{\dd s}\Phi(u_s)=\int_\Omega \Delta f_s f'_s\Delta u_s\dd x=\int_\Omega (\Delta f_s)^2\dd x-\int_\Omega \Delta f_s f''_s|\nabla u_s|^2,
\end{align*}
using the product rule $\Delta f_s=f''_s |\nabla u_s|^2+f'_s \Delta u_s$. Since $\Delta f_s\nabla f_s\cdot \nu=0$ and $\nabla f_s \cdot \nabla^2 f_s\nu\le 0$ on $\partial\Omega$ (recall Lemma \ref{lem:boundary}), we have with integration by parts that
\begin{align*}
\mathfrak{D}_s-\delta \int_\Omega \|\nabla ^2 f_s\|^2\dd x \ge (1-\delta)\int_\Omega \|\nabla ^2 f_s\|^2\dd x-\int_\Omega \nabla f_s\cdot\nabla\left(H(f_s)|\nabla f_s|^2\right)\dd x,
\end{align*}
where $\delta\in (0,1)$ is a parameter to be specified below and $H(f_s):=-\frac{f''_s}{(f_s')^2}$. Note that $\nabla H(f_s)=L(f_s)\nabla f_s$ with $L(f_s)=-\frac{f_s'''f'_s+2(f''_s)^2}{(f'_s)^5}$. By Gau\ss' theorem, we first obtain
\begin{align*}
0&=\int_\Omega \nabla\cdot (H(f_s)|\nabla f_s|^2\nabla f_s)\dd x=\int_\Omega \left[H(f_s)\Delta f_s |\nabla f_s|^2+2H(f_s)\nabla f_s\cdot \nabla^2 f_s\nabla f_s+L(f_s)|\nabla f_s|^4\right]\dd x,
\end{align*}
and consequently, for $\chi:=\sqrt{\frac{d}{d+8}}$:
\begin{align}
\begin{split}
\mathfrak{D}_s-\delta \int_\Omega \|\nabla ^2 f_s\|^2\dd x \ge &(1-\delta)\int_\Omega \|\nabla ^2 f_s\|^2\dd x\\&+\int_\Omega \left[H(f_s)\left((1-\chi)\Delta f_s|\nabla f_s|^2-2\chi \nabla f_s\cdot \nabla^2 f_s\nabla f_s \right)-\chi L(f_s)|\nabla f_s|^4\right]\dd x.
\end{split}
\label{eq:dsp1}
\end{align}
Defining $R:=\nabla^2 f_s-\frac{\Delta f_s}{d}\Id$ (which is traceless and symmetric) and choosing the Frobenius norm as matrix norm, we observe that
\begin{align*}
\|\nabla^2 f_s\|^2=\trc(\nabla^2 f_s\nabla^2 f_s)=\frac{(\Delta f_s)^2}{d}+\|R\|^2.
\end{align*}
Insert this into \eqref{eq:dsp1} to find
\begin{align}\label{eq:dsp2}
\begin{split}
\mathfrak{D}_s-\delta \int_\Omega \|\nabla ^2 f_s\|^2\dd x \ge \int_\Omega\bigg[&\frac{1-\delta}{d}(\Delta f_s)^2+\left(1-\left(1+\frac2{d}\right)\chi\right)H(f_s)\Delta f_s|\nabla f_s|^2\bigg.\\&\bigg.+(1-\delta)\|R\|^2-2\chi H(f_s)\nabla f_s\cdot R\nabla f_s-\chi L(f_s)|\nabla f_s|^4\bigg]\dd x.
\end{split}
\end{align}
Applying Young's inequality to the first two terms on the r.h.s. of \eqref{eq:dsp2} and using Lemma \ref{lem:traceless} from Appendix \ref{app:A} below to estimate the third and fourth term, we end up with
\begin{align}\label{eq:dsp3}
\mathfrak{D}_s-\delta \int_\Omega \|\nabla ^2 f_s\|^2\dd x&\ge \chi\int_\Omega H(f_s)^2|\nabla f_s|^4\left[-\frac{L(f_s)}{H(f_s)^2}-\frac{1}{1-\delta}\left(-1-\frac{d}{2}+\frac12\sqrt{d^2+8d}\right)\right]\dd x.
\end{align}
We conclude the proof by showing that there exists $\delta\in (0,1)$ such that the expression in square brackets in \eqref{eq:dsp3} is nonnegative. Indeed, since
\begin{align*}
-\frac{L(f_s)}{H(f_s)^2}=\frac{f'''_sf_s'}{(f_s'')^2}-2\ge -1-\frac{d}{2}+\frac12\sqrt{d^2+8d}+\bar\delta
\end{align*}
by Assumption \ref{assf}, one has
\begin{align*}
-\frac{L(f_s)}{H(f_s)^2}-\frac{1}{1-\delta}\left(-1-\frac{d}{2}+\frac12\sqrt{d^2+8d}\right)&\ge \frac1{1-\delta}\left[\bar\delta-\delta\left(\bar\delta-1-\frac{d}{2}+\frac12\sqrt{d^2+8d}\right)\right],
\end{align*}
which is nonnegative choosing $\delta$ sufficiently small. The desired estimate \eqref{eq:dsp0} now follows by passage to $s\searrow 0$ and lower semicontinuity.
\end{proof}
\begin{rem}[Power functions]\label{rem:power}
In consistence with the results from \cite{matthes2009}, our theory comprises the important case that $f(z)=Cz^\alpha$ is a genuine concave power function, where the exponent $\alpha$ is allowed inside the range
\begin{align}\label{eq:condalpha}
\frac{3}{4}-\frac14\sqrt{1+\frac{8}{d}}<\alpha\le 1.
\end{align}
Note that, in view of Assumption \ref{assf}, one has
\begin{align*}
\frac12>\frac{3}{4}-\frac14\sqrt{1+\frac{8}{d}}\ge \frac12-\frac1{d}\quad\text{ and }\quad \frac{f'''(z)f'(z)}{f''(z)^2}=\frac{2-\alpha}{1-\alpha}\quad\forall z>0,
\end{align*}
hence Assumption \ref{assf} is equivalent to \eqref{eq:condalpha} for this special choice of $f$.
\end{rem}
With all these prerequisites at hand, we derive a discrete weak formulation and \emph{a priori} estimates satisfied by the discrete solution $u_\tau$.
\begin{prop}[Discrete weak formulation and \emph{a priori} estimates]\label{prop:dweakf}
Let $\tau>0$ and define the discrete solution $u_\tau$ by \eqref{eq:minmov}\&\eqref{eq:discsol}. Then, for all $\beta>0$, $\phi\in C^\infty(\overline{\Omega})$ with $\partial_\nu\phi=0$ on $\partial\Omega$ and all $\eta\in C^\infty_c((0,\infty))\cap C(\R_+)$, the following \emph{discrete weak formulation} holds:
\begin{align}\label{eq:dweakf}
\begin{split}
&-\|\eta\|_{C^0}\kappa\tau\Phi(u_0)+\beta\int_0^\infty\int_\Omega \frac{|\eta|_\tau(t)-|\eta|_\tau(t+\tau)}{\tau}u_\tau(t,x)\log u_\tau(t,x)\dd x\dd t\\
&\quad\le \int_0^\infty\int_\Omega \frac{\eta_\tau(t)-\eta_\tau(t+\tau)}{\tau}u_\tau\phi\dd x\dd t+\int_0^\infty\int_\Omega \eta_\tau\Non_f(u_\tau,\phi)\dd x \dd t\\
&\quad\le \|\eta\|_{C^0}\kappa\tau\Phi(u_0)-\beta\int_0^\infty\int_\Omega \frac{|\eta|_\tau(t)-|\eta|_\tau(t+\tau)}{\tau}u_\tau(t,x)\log u_\tau(t,x)\dd x\dd t,
\end{split}
\end{align}
where $\Non_f$ is defined as in Theorem \ref{thm:existf} and $\kappa\ge 0$ is such that $-\kappa \Id \le \nabla^2\phi(x) \le \kappa\Id$ for all $x\in\overline{\Omega}$. For $h:\R_+\to \R$, we denote $h_\tau(s)=h(\lceil\frac{s}{\tau}\rceil\tau)$.

Furthermore, the following \emph{a priori} estimates hold for each fixed $T>0$:
\begin{align}\label{eq:aprif}
\|f(u_\tau)\|_{L^\infty([0,T];H^1)}\le C_1,\quad \|f(u_\tau)\|_{L^2([0,T];H^2)}\le C_2\quad\text{and}\quad \|u_\tau\|_{L^\infty([0,T];L^p)}\le C_3,
\end{align}
for some constants $C_j>0$ and some $p>1$.
\end{prop}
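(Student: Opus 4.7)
The plan is to follow the JKO method of Lemma~\ref{lem:dweak}, but since $f''$ is singular at zero the first variation of $\Phi$ cannot be computed at $u_\tau^n$ directly; instead we regularize the perturbation by interposing a short time of heat flow. Fix $\phi\in C^\infty(\overline\Omega)$ with $\partial_\nu\phi=0$ and $\beta>0$. Let $X_s$ denote the flow of $\nabla\phi$ on $\overline\Omega$ (which preserves $\Omega$ by the boundary condition and convexity) and $\flowS^\ent$ the Neumann heat semigroup. Put $w_s:=\flowS^\ent_{\beta s}(u_\tau^n)$, which is smooth and strictly positive for $s>0$ and satisfies $w_s\to u_\tau^n$ in $(\Prob,\W)$ as $s\searrow 0$, and use the trial measure $v_s:=(X_s)_\#\, w_s$ in the minimizing property of $u_\tau^n$:
\begin{align*}
0 \le \frac{1}{s}\bigl[\Phi(v_s)-\Phi(u_\tau^n)\bigr] + \frac{1}{2\tau s}\bigl[\W^2(v_s,u_\tau^{n-1})-\W^2(u_\tau^n,u_\tau^{n-1})\bigr].
\end{align*}

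Split $\Phi(v_s)-\Phi(u_\tau^n)=[\Phi((X_s)_\#w_s)-\Phi(w_s)] + [\Phi(w_s)-\Phi(u_\tau^n)]$. The first bracket, taken along the smooth $w_s$, admits the formal Euler--Lagrange expansion from Section~\ref{sec:discreteweak}, yielding $s\int\Non_f(w_s,\phi)\dx+o(s)\to s\int\Non_f(u_\tau^n,\phi)\dx$; the second bracket is $\le 0$ by monotonicity of $\Phi$ along the heat flow (its limit $-\beta\mathfrak{D}^\ent\Phi(u_\tau^n)$ is discarded). For the Wasserstein increment, use the (non-optimal) transport $T_s\circ X_s^{-1}$ from $v_s$ to $u_\tau^{n-1}$, where $T_s$ is optimal from $w_s$ to $u_\tau^{n-1}$, together with the Taylor expansion $\phi\circ T_s-\phi=\nabla\phi\cdot(T_s-\id)+R$ with $|R|\le \tfrac{\kappa}{2}|T_s-\id|^2$, to obtain
\begin{align*}
\W^2(v_s,u_\tau^{n-1})-\W^2(w_s,u_\tau^{n-1}) \le -2s\int\phi\,(u_\tau^{n-1}-w_s)\dx + s\kappa\,\W^2(w_s,u_\tau^{n-1}) + O(s^2);
\end{align*}
the EVI of the $0$-flow $\flowS^\ent$ contributes $\W^2(w_s,u_\tau^{n-1})-\W^2(u_\tau^n,u_\tau^{n-1})\le 2\beta s[\ent(u_\tau^{n-1})-\ent(w_s)]$. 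Dividing by $s$, letting $s\searrow 0$, and combining yields the single-sided bound
\begin{align*}
-\frac{\kappa}{2\tau}\W^2(u_\tau^n,u_\tau^{n-1}) - \frac{\beta}{\tau}\bigl[\ent(u_\tau^{n-1})-\ent(u_\tau^n)\bigr] \le \frac{1}{\tau}\int(u_\tau^n-u_\tau^{n-1})\phi\dx + \int\Non_f(u_\tau^n,\phi)\dx.
\end{align*}
Replacing $\phi$ by $-\phi$ produces the matching upper bound (the $\W^2$- and $\ent$-terms do not change sign). Multiplication by $\tau\eta(n\tau)$, summation in $n$, separation of $\eta$ into its positive and negative parts (which produces the $|\eta|_\tau$ weight on the entropy contribution), and the classical bound $\sum_n\W^2(u_\tau^n,u_\tau^{n-1})\le 2\tau\Phi(u_0)$ from Proposition~\ref{classicalestimates} then yield \eqref{eq:dweakf} after the standard discrete integration-by-parts.

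For the a priori estimates, the bound on $\|f(u_\tau)\|_{L^\infty([0,T];H^1)}$ is a direct consequence of the monotonicity $\Phi(u_\tau^n)\le\Phi(u_0)$ (as in Proposition~\ref{classicalestimates}) combined with Poincaré's inequality and Lemma~\ref{lem:propf}(b), exactly as in Proposition~\ref{prop:mmovf}. Summing \eqref{eq:addregf} over $n=1,\dots,\lceil T/\tau\rceil$ telescopes to $\frac{1}{\delta}(\ent(u_0)-\ent(u_\tau^N))$, which is finite since $z\log z\ge -1/e$ and $u_0\in L^2$ (the latter follows from $f(u_0)\in H^1$ via Lemma~\ref{lem:propf}(a) and Sobolev embedding, using the range of $\alpha$ in Assumption~\ref{assf}); this gives the $L^2H^2$ bound on $f(u_\tau)$. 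Finally, the inequality $u_\tau^\alpha\le(\alpha/C)f(u_\tau)$ from Lemma~\ref{lem:propf}(a), composed with the embedding $H^1(\Omega)\hookrightarrow L^{p^*}(\Omega)$ with Sobolev exponent $p^*$, produces the $L^\infty L^p$ bound with $p:=p^*/\alpha>1$, the strict inequality again being ensured by the admissible range of $\alpha$.

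The principal difficulty lies in justifying the passage $s^{-1}[\Phi((X_s)_\#w_s)-\Phi(w_s)]\to\int\Non_f(u_\tau^n,\phi)\dx$: while the formal Euler--Lagrange computation is straightforward along the smooth, strictly positive $w_s$, one must verify that the resulting integrands converge in $L^1$ as $w_s\to u_\tau^n$, and in particular that $\Non_f(u_\tau^n,\phi)$ itself is integrable in spite of the blow-up of $f'$ near zero. This is secured by combining the additional regularity $f(u_\tau^n)\in H^2$ from Proposition~\ref{prop:addregf} with the bound $zf'(z)\le C_1(f(z)+1)$ of Lemma~\ref{lem:propf}(c), which absorbs the otherwise singular factor $u_\tau^n f'(u_\tau^n)$ into an $L^2$-controllable quantity.
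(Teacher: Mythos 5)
Your route is genuinely different from the paper's. The paper applies the flow interchange lemma (Theorem~\ref{thm:flowinterchange}) \emph{once}, with the auxiliary functional $\pot(v)=\beta\int v\log v\,\dd x+\int v\phi\,\dd x$, whose $(-\kappa)$-flow is a single smooth drift--diffusion semigroup; the dissipation of $\Phi$ along that flow is computed and estimated, and the flow-interchange estimate delivers the per-step inequality with no hand-made Wasserstein bookkeeping. You instead split the regularization and the transport into two stages: pre-smooth by $w_s=\flowS^\ent_{\beta s}(u_\tau^n)$, push forward by $X_s$, and control the Wasserstein increment via a non-optimal transport map and the EVI of the heat flow. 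This is a Trotter--Kato-style discretization of the paper's drift--diffusion flow, and after some algebra it produces the same per-step inequality. What the paper's approach buys is the absence of a diagonal limit: since the flow interchange lemma already packages the $\limsup$, no continuity of $\Phi$ along the smoothing has to be verified separately.

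That diagonal limit is the genuine gap in your write-up, and it is actually two problems. First, the passage $s^{-1}[\Phi((X_s)_\# w_s)-\Phi(w_s)]\to\int\Non_f(u_\tau^n,\phi)$ requires that $\nabla^2 f(w_s)$ and $w_s f'(w_s)$ converge suitably as $s\searrow 0$; since $f(\flowS^\ent_\sigma(\cdot))$ is a nonlinear composition, $f(u_\tau^n)\in H^2$ does not immediately give $f(w_s)\to f(u_\tau^n)$ in $H^2$, and this is precisely what the bound \eqref{eq:dspabgesch2} combined with the finiteness of the flow-interchange $\limsup$ is used for in the paper. Second, ``$\Phi(w_s)\le\Phi(u_\tau^n)$ by monotonicity along the heat flow'' is unjustified at $\sigma=0+$: the dissipation estimate of Proposition~\ref{prop:addregf} shows $\Phi(\flowS^\ent_\sigma(u_\tau^n))$ is nonincreasing for $\sigma>0$, but lower semicontinuity only gives $\Phi(u_\tau^n)\le\liminf_{\sigma\searrow 0}\Phi(\flowS^\ent_\sigma(u_\tau^n))$; the opposite inequality (no upward jump at $\sigma=0$) needs an argument. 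There are also two small slips in the a priori part: $f(u_0)\in H^1$ via Lemma~\ref{lem:propf}(a) and Sobolev gives $u_0\in L^p$ for $p\in(1,\alpha\tfrac{2d}{d-2}]$, not $u_0\in L^2$ (the entropy $\ent(u_0)$ is finite from any such $p>1$, which is all one needs); and the exponent should be $p=\alpha p^*$, not $p^*/\alpha$ — from $c\,u^\alpha\le f(u)\in L^{p^*}$ one gets $u^{\alpha p^*}\in L^1$.
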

\begin{proof}
The \emph{a priori} estimates on $u_\tau$ are obtained in the same way as in the proof of Proposition \ref{prop:apriori}. Note that, thanks to Assumption \ref{assf} and Sobolev's inequality, we have for all $t\in [0,T]$:
\begin{align}\label{eq:Lpest}
\begin{split}
\|u_\tau(t,\cdot)\|_{L^p}&\le \tilde C\qquad\text{for all }p\in \left(1,\alpha\frac{2d}{d-2}\right]\qquad\text{if }d\ge 3,\\
\|u_\tau(t,\cdot)\|_{L^p}&\le \tilde C\qquad\text{for all }p\in (1,\infty)\qquad\text{if }d=2,\\
\|u_\tau(t,\cdot)\|_{L^p}&\le \tilde C\qquad\text{for all }p\in (1,\infty]\qquad\text{if }d=1.
\end{split}
\end{align}
To derive the discrete weak formulation \eqref{eq:dweakf}, we use the flow interchange lemma (Theorem \ref{thm:flowinterchange}) with the displacement $(-\kappa)$-convex \emph{regularized potential energy}
\begin{align*}
\pot(v):=\beta \int_\Omega v\log v\dd x+\int_\Omega v\phi\dd x,
\end{align*}
for $\beta>0$. The associated $(-\kappa)$-flow $\flowS^\pot$ is given by (see \cite{savare2008} for more details)
\begin{xalignat*}{2}
\partial_s \flowS_s^\pot(v)&=\beta \Delta \flowS_s^\pot(v)+\nabla\cdot (\flowS_s^\pot(v)\nabla\phi) &&\text{on }(0,\infty)\times\Omega,\\
\partial_\nu \flowS_s^\pot(v)&=0 &&\text{on }(0,\infty)\times\partial\Omega.
\end{xalignat*}
Let $\tau>0$ and $n\in\N$. Since $\flowS_s^\pot(u_\tau^n)$ is smooth and strictly positive, we obtain for $s>0$ (writing again $u_s:=\flowS_s^\pot(u_\tau^n)$ and $f_s:=f(u_s)$ for brevity):
\begin{align*}
-\frac{\dd}{\dd s}\Phi(u_s)=\int_\Omega f'(u_s)\Delta f(u_s)(\beta \Delta u_s+\nabla \cdot (u_s\nabla \phi))\dd x.
\end{align*}
The viscosity term can be treated exactly as in the proof of Proposition \ref{prop:addregf} yielding 
\begin{align*}
-\frac{\dd}{\dd s}\Phi(u_s)=\int_\Omega \left[\beta\delta \|\nabla^2 f_s\|^2+\Delta f(u_s)\nabla f_s\cdot\nabla \phi+\Delta f_s f'(u_s)u_s\Delta\phi\right]\dd x.
\end{align*}
Using H\"older's inequality and Lemma \ref{lem:propf}(c), we can estimate from below:
\begin{align}\label{eq:dspabgesch}
-\frac{\dd}{\dd s}\Phi(u_s)&\ge \beta\delta \|\nabla^2 f_s\|_{L^2}^2-C\|\nabla^2 f_s\|_{L^2}(\|f_s\|_{H^1}+1),
\end{align}
for some constant $C>0$ depending on $\phi$. The following Sobolev inequality holds for all $\rho\in H^2(\Omega)$:
\begin{align}\label{eq:erstaunlich}
\|\rho\|_{H^1}&\le C'\|\nabla^2 \rho\|_{L^2}^\theta\|\rho\|_{L^1}^{1-\theta},\qquad\text{with }\theta=\frac{1+\frac{d}{2}}{2+\frac{d}{2}}\in(0,1).
\end{align}
Using \eqref{eq:erstaunlich} in \eqref{eq:dspabgesch}, recalling that $\|f_s\|_{L^1}\le C_0(\|u_s\|_{L^1}+\mathcal{L}^d(\Omega))=C_0(1+\mathcal{L}^d(\Omega))$ by Lemma \ref{lem:propf}(b), one obtains
\begin{align}\label{eq:dspabgesch2}
-\frac{\dd}{\dd s}\Phi(u_s)&\ge \beta\delta \|\nabla^2 f_s\|_{L^2}^2-\tilde C \|\nabla^2 f_s\|_{L^2}^{1+\theta}-\tilde C\|\nabla^2 f_s\|_{L^2}\ge \frac12\beta\delta \|\nabla^2 f_s\|_{L^2}^2-\tilde C',
\end{align}
which is bounded from below, using Young's inequality in the last step. We are now concerned with the passage to the limit inferior as $s\searrow 0$. In view of the flow interchange lemma (Theorem \ref{thm:flowinterchange}), we can assume that at least for small $s>0$ the quantity $-\frac{\dd}{\dd s}\Phi(u_s)$ is bounded from above. By \eqref{eq:dspabgesch2}, we infer that $\|\nabla^2 f_s\|_{L^2}$ is bounded in $s$. We conclude that (on a suitable subsequence) $f(u_s)\rightharpoonup f(u_\tau^n)$ in $H^2(\Omega)$ as well as $f(u_s)\to f(u_\tau^n)$ in $H^1(\Omega)$ by Rellich's theorem, as $s\searrow 0$. By continuity of $f$, one also has $u_s\to u_\tau^n$ pointwise almost everywhere on $\Omega$. By a straightforward application of Vitali's theorem and Lemma \ref{lem:propf}(c), one gets that $f'(u_s)u_s\to f'(u_\tau^n)u_\tau^n$ in $L^2(\Omega)$. Hence, weak-strong convergence leads to
\begin{align*}
\lim_{s\searrow 0}\int_\Omega\left[\Delta f(u_s)\nabla f_s\cdot\nabla \phi+\Delta f_s f'(u_s)u_s\Delta\phi\right]\dd x&=\int_\Omega\left[\Delta f(u_\tau^n)\nabla f(u_\tau^n)\cdot\nabla \phi+\Delta f(u_\tau^n) f'(u_\tau^n)u_\tau^n\Delta\phi\right]\dd x\\&=\int_\Omega \Non_f(u_\tau^n,\phi)\dd x.
\end{align*}
We thus arrive at
\begin{align*}
\mathfrak{D}^\pot(\Phi(u_\tau^n))&\ge \int_\Omega \Non_f(u_\tau^n,\phi)\dd x.
\end{align*}
The flow interchange lemma (Theorem \ref{thm:flowinterchange}) then yields for $\phi$ and $-\phi$, respectively:
\begin{align*}
&\quad -\frac{\kappa}{2}\W^2(u_\tau^n,u_\tau^{n-1})-\beta \int_\Omega u_\tau^{n-1}\log u_\tau^{n-1}\dd x+\beta \int_\Omega u_\tau^n\log u_\tau^n\dd x\\ 
&\le \int_\Omega u_\tau^n\phi\dd x -\int_\Omega u_\tau^{n-1}\phi\dd x+\tau\int_\Omega\Non_f(u_\tau^n,\phi)\dd x\\
&\quad \le \frac{\kappa}{2}\W^2(u_\tau^n,u_\tau^{n-1})+\beta \int_\Omega u_\tau^{n-1}\log u_\tau^{n-1}\dd x-\beta \int_\Omega u_\tau^n\log u_\tau^n\dd x.
\end{align*}
Proceeding similarly as for Lemma \ref{lem:dweak}, we introduce a \emph{nonnegative} test function $\eta\in C^\infty_c((0,\infty))\cap C(\R_+)$, multiply the chain of inequalites above with $\eta(n\tau)$ and sum over $n\in\N$ to eventually get
\begin{align}\label{eq:dwplus}
\begin{split}
&\quad-\|\eta\|_{C^0}\kappa\tau\Phi(u_0)+\beta\int_0^\infty\int_\Omega \frac{\eta_\tau(t)-\eta_\tau(t+\tau)}{\tau}u_\tau\log(u_\tau)\dd x\dd t\\
&\le \int_0^\infty\int_\Omega \frac{\eta_\tau(t)-\eta_\tau(t+\tau)}{\tau}u_\tau\phi\dd x\dd t+\int_0^\infty\int_\Omega \eta_\tau\Non_f(u_\tau,\phi)\dd x \dd t\\
&\quad\le \|\eta\|_{C^0}\kappa\tau\Phi(u_0)-\beta\int_0^\infty\int_\Omega \frac{\eta_\tau(t)-\eta_\tau(t+\tau)}{\tau}u_\tau\log(u_\tau)\dd x\dd t.
\end{split}
\end{align}
To arrive at \eqref{eq:dweakf} for \emph{arbitrary} test functions $\eta\in C^\infty_c((0,\infty))\cap C(\R_+)$, we decompose $\eta=\eta_+-\eta_-$ into its positive and negative parts and use \eqref{eq:dwplus} for $\eta_+$ and $\eta_-$, respectively (recall that $|\eta|=\eta_++\eta_-$).
\end{proof}

As in the previous section, the proof of Theorem \ref{thm:existF} is completed by passing to the continuous-time limit $\tau\searrow 0$.
\begin{prop}[Continuous-time limit]\label{prop:convergencef}
Let $T>0$, a vanishing sequence of step sizes $\tau_k\searrow 0$ $(k\to\infty)$ and the associated family of discrete solutions $(u_{\tau_k})_{k\in\N}$ be given. Then, the following statements hold with $p>1$ from Proposition \ref{prop:dweakf}:

There exists a (non-relabelled) subsequence and a map $u\in C^{1/2}([0,T];(\Prob,\W))\cap L^\infty([0,T];L^p(\Omega))$ such that for $k\to\infty$:
\begin{enumerate}[(a)]
\item $u_{\tau_k}\to u$ uniformly w.r.t. $t\in [0,T]$ in $(\Prob,\W)$.
\item $u_{\tau_k}\to u$ in $L^p([0,T];L^p(\Omega))$ and almost everywhere on $[0,T]\times\Omega$.
\item $f(u_{\tau_k})\to f(u)$ in $L^2([0,T];H^1(\Omega))$.
\item $f(u_{\tau_k})\rightharpoonup f(u)$ in $L^2([0,T];H^2(\Omega))$.
\end{enumerate}
The limit $u$ is a weak solution to \eqref{PDE00} in the sense stated in Theorem \ref{thm:existf} and for almost every $t\in [0,T]$, one has
\begin{align*}
\lim_{k\to\infty}&\Phi(u_{\tau_k}(t,\cdot))=\Phi(u(t,\cdot)).
\end{align*}
\end{prop}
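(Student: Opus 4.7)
The plan is to parallel Propositions \ref{prop:cvg} and \ref{eq:disccontweak}, now working with $f(u_{\tau_k})$ in place of $u_{\tau_k}$ to compensate for the absence of an $H^1$-bound on $u_{\tau_k}$ itself. Part (a) will follow from the H\"older-type estimate \eqref{w2cont} (which persists since $\Phi(u_0)<\infty$ under the hypothesis $f(u_0)\in H^1(\Omega)$) together with the refined Arzel\`a-Ascoli theorem \cite[Prop.~3.3.1]{savare2008}, while weak convergence (d) is immediate from the $L^2([0,T];H^2)$ bound in \eqref{eq:aprif} via Alaoglu's theorem.

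For part (c), I would invoke the extended Aubin-Lions Theorem \ref{thm:ex_aub} applied to $v_k:=f(u_{\tau_k})$, with $\ban=H^1(\Omega)$, $\mathcal{A}(w)=\|w\|_{H^2}^2$ (extended by $+\infty$ off $H^2(\Omega)$), and with pseudo-distance $\dg(w,\tilde w):=\W(f^{-1}(w),f^{-1}(\tilde w))$ whenever $f^{-1}(w),f^{-1}(\tilde w)$ are probability densities on $\Omega$, and $+\infty$ otherwise; strict monotonicity of $f:[0,\infty)\to[0,\sup f)$ from $f''<0$ and $f(0)=0$ gives $\dg(w,\tilde w)=0\Rightarrow w=\tilde w$ on the relevant domain. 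Hypothesis \eqref{eq:hypo1} coincides with \eqref{eq:aprif}, while \eqref{eq:hypo2} follows from \eqref{sumw2} by the argument of \cite{zinsl2012}. This yields $H^1$-convergence of $f(u_{\tau_k})$ to $f(u)$ in measure on $(0,T)$, upgraded to $L^2([0,T];H^1)$ by the uniform $L^\infty([0,T];H^1)$-bound and the Radon-Riesz argument of Proposition \ref{prop:cvg}. Part (b) then follows by extracting a further a.e.-convergent subsequence via continuity of $f^{-1}$, and applying Vitali with the uniform $L^\infty([0,T];L^{p'})$ bound from \eqref{eq:Lpest} for some $p'>p$.

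To identify $u$ as a weak solution, I would fix $\beta>0$ and pass to the limit $k\to\infty$ in \eqref{eq:dweakf}. The residual $\kappa\tau_k\Phi(u_0)$-terms vanish trivially, and the time-derivative piece converges to $\int\int\partial_t\eta\cdot u\phi\,\dd x\,\dd t$ by (b) together with uniform convergence of the difference quotient to $\partial_t\eta$. For the entropy terms,
\[
\int_0^\infty\!\int_\Omega \tfrac{|\eta|_{\tau_k}(t)-|\eta|_{\tau_k}(t+\tau_k)}{\tau_k}\,u_{\tau_k}\log u_{\tau_k}\,\dd x\,\dd t\;\longrightarrow\;-\int_0^\infty\!\int_\Omega \partial_t|\eta|\cdot u\log u\,\dd x\,\dd t
\]
via Vitali's theorem and the inequality $|z\log z|\le C(z^{p}+1)$ combined with the $L^p$-convergence from (b); the resulting limit is finite uniformly in $\beta$, so subsequently sending $\beta\searrow 0$ eliminates these contributions. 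The central step is convergence of the $\Non_f$-integral: for $\Delta f(u_{\tau_k})\nabla f(u_{\tau_k})\cdot\nabla\phi$, weak $L^2$-convergence of $\Delta f(u_{\tau_k})$ from (d) combines with strong $L^2$-convergence of $\nabla f(u_{\tau_k})$ from (c) to give weak $L^1$-convergence of the product; for $u_{\tau_k}f'(u_{\tau_k})\Delta f(u_{\tau_k})\Delta\phi$, I would first upgrade pointwise convergence of $u_{\tau_k}f'(u_{\tau_k})$ to strong $L^2([0,T]\times\Omega)$-convergence and then combine with weak $L^2$-convergence of $\Delta f(u_{\tau_k})$.

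The main obstacle I anticipate is precisely this last uniform-integrability step: since $f'$ blows up at $0$ under Assumption \ref{assf}, one must exploit the bound $zf'(z)\le C_1(f(z)+1)$ from Lemma \ref{lem:propf}(c) jointly with the higher $L^q$-integrability ($q>2$) of $f(u_{\tau_k})$ coming from Sobolev embedding of $H^1(\Omega)$, to obtain uniform integrability of $(u_{\tau_k}f'(u_{\tau_k}))^2$. Finally, the pointwise energy convergence $\Phi(u_{\tau_k}(t,\cdot))\to\Phi(u(t,\cdot))$ for almost every $t\in[0,T]$ follows by the same Vitali-type argument as in \eqref{eq:conven}, using the uniform bound on $\|\nabla f(u_{\tau_k})\|_{L^2}$ and the subsequential pointwise $H^1$-convergence in $t$ extracted from (c); monotonicity of $t\mapsto\Phi(u(t,\cdot))$ is then inherited from the discrete estimate \eqref{supE}.
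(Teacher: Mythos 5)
Your proposal is correct in substance and reaches the same conclusions, but it takes a genuinely different route through the compactness argument. The paper applies the extended Aubin--Lions lemma (Theorem \ref{thm:ex_aub}) to the densities $u_{\tau_k}$ themselves, with $\ban=L^p(\Omega)$, $\mathcal{A}(\rho)=\|f(\rho)\|_{H^1}^2$ and $\dg=\W$; the nontrivial work is then to show that sublevels of this $\mathcal{A}$ are relatively compact \emph{in $L^p$}, which is where Assumption \ref{assf}(i) and Vitali enter, while the lower semicontinuity of $\dg=\W$ w.r.t.\ $L^p$-convergence is immediate via narrow convergence. This produces part (b) directly, from which (c) is obtained a posteriori by a.e.\ convergence and another Vitali argument. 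You instead apply Aubin--Lions to the transformed quantities $v_k=f(u_{\tau_k})$, with $\ban=H^1(\Omega)$, $\mathcal{A}(w)=\|w\|_{H^2}^2$ (where sublevel compactness is just Rellich--Kondrachov, no assumption on $f$ needed), and the pulled-back pseudo-distance $\dg(w,\tilde w)=\W(f^{-1}(w),f^{-1}(\tilde w))$. This yields (c) first and then (b) by inverting $f$ and invoking Vitali. The two routes are dual: you have relocated the use of $f(z)\ge\frac{C}{\alpha}z^\alpha$ and $\alpha>[\frac12-\frac1d]_+$ from the $\mathcal{A}$-sublevel-compactness check into the pseudo-distance $\dg$.

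One small but genuine gap in your write-up: you verify the separating property of $\dg$ (that $\dg(w,\tilde w)=0$ forces $w=\tilde w$, via strict monotonicity of $f$), but Theorem \ref{thm:ex_aub} also requires $\dg$ to be \emph{lower semicontinuous} on $\ban\times\ban$, which you do not address. Since your $\dg$ is a nontrivial composition, this deserves a sentence. Concretely: if $w_n\to w$ in $H^1(\Omega)$ with $f^{-1}(w_n)$ a probability density for all $n$, extract a subsequence with $w_n\to w$ a.e.; by continuity of $f^{-1}$ one gets $f^{-1}(w_n)\to f^{-1}(w)$ a.e., and the bound $f^{-1}(r)\le(\alpha r/C)^{1/\alpha}$ together with Sobolev embedding of $H^1$ gives a $k$-uniform bound on $f^{-1}(w_n)$ in $L^q(\Omega)$ for some $q>1$ (this is precisely where $\alpha>[\frac12-\frac1d]_+$ is needed), hence uniform integrability, hence narrow convergence of $f^{-1}(w_n)$ and thus continuity (not just lsc) of $\dg$ on the relevant domain. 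With this filled in, the proposal is complete. A minor remark: the final energy convergence $\Phi(u_{\tau_k}(t,\cdot))\to\Phi(u(t,\cdot))$ a.e.\ in $t$ does not actually require a Vitali-type argument in this setting --- since $\Phi(v)=\frac12\|\nabla f(v)\|_{L^2}^2$, it follows directly from the a.e.-in-$t$ strong $H^1(\Omega)$-convergence of $f(u_{\tau_k})(t,\cdot)$ extracted from (c) --- but invoking Vitali is harmless.
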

\begin{proof}
Clearly, $f(u_{\tau_k})\rightharpoonup v$ in $L^2([0,T];H^2(\Omega))$ for some limit $v$ by Alaoglu's theorem and estimate \eqref{eq:aprif}. We show that $f(u_\tau)\to f(u)$ in $L^2([0,T];L^2(\Omega))$ for $u=f^{-1}(v)$. Then, by a standard interpolation inequality, $f(u_\tau)\to f(u)$ in $L^2([0,T];H^1(\Omega))$ follows. We seek to apply Theorem \ref{thm:ex_aub} and let $\ban:=L^p(\Omega)$ (with $p>1$ from Proposition \ref{prop:dweakf}),
\begin{align*}
\mathcal{A}(\rho):=\begin{cases}\|f(\rho)\|_{H^1}^2,&\text{if }f(\rho)\in H^1(\Omega),\\ +\infty,&\text{otherwise},\end{cases}
\end{align*}
and $\dg$ as in the proof of Proposition \ref{prop:cvg}. Obviously, $\int_0^T\mathcal{A}(u_{\tau_k}(t,\cdot))\dd t$ is $k$-uniformly bounded thanks to \eqref{eq:aprif}. For the application of Theorem \ref{thm:ex_aub}, it remains to verify that $\mathcal{A}$ has relatively compact sublevels in $\ban$. Let a sequence $(\rho_k)_{k\in\N}$ in $\ban$ with $\mathcal{A}(\rho_k)\le C$ for all $k\in\N$ and some $C\ge 0$ be given. Clearly, $\|f(\rho_k)\|_{H^1}\le \sqrt{C}$ for all $k$; hence, by the Rellich-Kondrachov compactness theorem, $f(\rho_k)\to f(\rho)$ in $L^r(\Omega)$ for sufficiently large $r\in \left(1,\frac{2d}{d-2}\right)$ as well as pointwise a.e. on $\Omega$, for some limit $f(\rho)\in L^r(\Omega)$, extracting a suitable subsequence. By continuity, $\rho_k\to \rho$ pointwise a.e. on $\Omega$. Without loss of generality, we may also assume that $\rho_k\to\rho$ weakly in $\ban$, by Alaoglu's theorem and \eqref{eq:Lpest} --- which also yields $L^p$-uniform integrability of $(\rho_k)_{k\in\N}$. Convergence of $\rho_k$ to $\rho$ in $\ban$ then follows by Vitali's convergence theorem.

The application of Theorem \ref{thm:ex_aub} yields the existence of a limit map $u$ such that (on a subsequence) $u_{\tau_k}(t,\cdot)\to u(t,\cdot)$ in $\ban$, in measure w.r.t. $t\in [0,T]$. As in the proof of Proposition \ref{prop:cvg}, we deduce that $u_{\tau_k}\to u$ in $L^p([0,T];L^p(\Omega))$ and pointwise almost everywhere on $[0,T]\times\Omega$, possibly extracting further subsequences. Hence, by continuity, $f(u_{\tau_k})\to f(u)$ a.e. on $[0,T]\times\Omega$. Since $f(u_{\tau_k})$ is $k$-uniformly bounded in $L^\infty([0,T];L^{\frac{2d}{d-2}}(\Omega))$ thanks to Sobolev's inequality and \eqref{eq:aprif}, the family $(f(u_{\tau_k}))_{k\in\N}$ is uniformly integrable in $L^2([0,T];L^2(\Omega))$. Vitali's theorem yields the claimed convergence in $L^2([0,T];L^2(\Omega))$.

The limit $u$ is a weak solution to \eqref{PDE00} because of the following: First, since $-\frac1{e}\le z\log z\le z^p$ for all $z\ge 0$ and thanks to the uniform estimate in $L^\infty([0,T];L^p(\Omega))$, $u_{\tau_k}\log(u_{\tau_k})$ is $k$-uniformly bounded in $L^1([0,T];L^1(\Omega))$. Hence, letting $k\to\infty$ and $\beta\searrow 0$ in \eqref{eq:dweakf} yields
\begin{align*}
\lim_{k\to\infty}\left[\int_0^\infty\int_\Omega \frac{\eta_{\tau_k}(t)-\eta_{\tau_k}(t+{\tau_k})}{{\tau_k}}u_{\tau_k}\phi\dd x\dd t+\int_0^\infty\int_\Omega \eta_{\tau_k}\Non_f(u_{\tau_k},\phi)\dd x \dd t\right]&=0.
\end{align*}
We obtain the time-continuous weak formulation since 
\begin{align*}
\lim_{k\to\infty}\int_0^\infty\int_\Omega \eta_{\tau_k} \Non_f(u_{\tau_k},\phi)\dd x \dd t&=
\int_0^\infty\int_\Omega \eta\Non_f(u,\phi)\dd x \dd t,
\end{align*}
thanks to the weak convergence $\Delta f(u_{\tau_k})\rightharpoonup \Delta f(u)$ and the strong convergence $\nabla f(u_{\tau_k})\to\nabla f(u)$, both in $L^2([0,T];L^2(\Omega))$. Especially, $u_{\tau_k}f'(u_{\tau_k})\to uf'(u)$ in $L^2([0,T];L^2(\Omega))$ due to Lemma \ref{lem:propf} and Vitali's theorem:
\begin{align*}
\int_0^T\int_\Omega (u_{\tau_k}f'(u_{\tau_k}))^2\dd x\dd t &\le C_1 \int_0^T\int_\Omega (f(u_{\tau_k})+1)^2\dd x \dd t\le 2C_1 \int_0^T\int_\Omega (f(u_{\tau_k})^2+1)\dd x \dd t,
\end{align*}
and $(f(u_{\tau_k}))_{k\in\N}$ is uniformly integrable in $L^2([0,T];L^2(\Omega))$.
\end{proof}
%


\appendix

\section{}\label{app:A}

\begin{lem}[Binomial formula for traceless symmetric matrices]\label{lem:traceless}
Let $A\in \R^{d\times d}$ be symmetric and traceless. Then, for all $v\in\R^d$, the following estimate holds:
\begin{align}
\|A\|^2+2v\cdot Av+\frac{d-1}{d}|v|^4&\ge 0.\label{eq:Av}
\end{align}
\end{lem}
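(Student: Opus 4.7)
The plan is to reorganize the left-hand side of \eqref{eq:Av} as the squared Frobenius norm of a single symmetric matrix, so that nonnegativity becomes obvious. The natural candidate is obtained by ``completing the square'' against a rank-one correction built from $v$.

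More precisely, I would introduce the symmetric matrix
\begin{align*}
M := vv^\top - \frac{|v|^2}{d}\,\Id,
\end{align*}
which is by construction traceless. I would then compute the Frobenius inner product $\langle A, M\rangle = \trc(AM)$ exploiting the assumption $\trc A = 0$: since $\trc(A\,vv^\top) = v \cdot Av$ and $\trc(A \cdot \Id) = \trc A = 0$, one obtains
\begin{align*}
\langle A, M\rangle = v\cdot Av.
\end{align*}
Next I would evaluate
\begin{align*}
\|M\|^2 = \|vv^\top\|^2 - \frac{2|v|^2}{d}\trc(vv^\top) + \frac{|v|^4}{d^2}\trc(\Id) = |v|^4 - \frac{2|v|^4}{d} + \frac{|v|^4}{d} = \frac{d-1}{d}|v|^4.
\end{align*}

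Combining these two identities, the left-hand side of \eqref{eq:Av} equals
\begin{align*}
\|A\|^2 + 2\langle A, M\rangle + \|M\|^2 = \|A + M\|^2 \ge 0,
\end{align*}
which is the desired conclusion. I do not anticipate any real obstacle: the only subtle point is making sure that the correction $M$ lies in the subspace of traceless symmetric matrices (so that it is ``compatible'' with $A$ and the identity $\langle A, \Id\rangle = 0$ kicks in), which is precisely why the $\frac{|v|^2}{d}\Id$ subtraction produces the sharp constant $\frac{d-1}{d}$.
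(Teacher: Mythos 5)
Your proof is correct, and it is genuinely different from---and in fact slicker than---the one in the paper. You complete the square in the Frobenius inner product by passing to $M = vv^\top - \frac{|v|^2}{d}\Id$, so that the left-hand side of \eqref{eq:Av} becomes \emph{exactly} $\|A+M\|^2$; the key mechanism is that $\trc A = 0$ kills $\langle A, \Id\rangle$, so that subtracting the trace part of $vv^\top$ leaves $\langle A, M\rangle = v\cdot Av$ unchanged while shrinking $\|M\|^2$ from $|v|^4$ to $\frac{d-1}{d}|v|^4$. The paper instead normalizes $|v|=1$, diagonalizes $A$, bounds $\sum_i\lambda_i w_i^2$ from below by the smallest eigenvalue $\lambda_1$, eliminates $\lambda_1$ via $\sum_i\lambda_i=0$, and then recognizes the remaining expression as a sum of two squares. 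Both arguments are short, but yours has the advantage of being an identity rather than a chain of estimates, which makes the equality case ($A = \frac{|v|^2}{d}\Id - vv^\top$) transparent and avoids diagonalization altogether. One small clarification: you write that the ``subtle point'' is that $M$ itself is traceless. That fact is not actually used anywhere; what matters is only that $A$ is traceless, so that $\langle A, \Id\rangle = 0$. The tracelessness of $M$ is just a pleasant by-product of having projected $vv^\top$ onto the orthogonal complement of $\Id$.
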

\begin{proof}
Without loss of generality, $|v|=1$. Using the orthogonal decomposition $A=SDS^\tT$ for orthogonal $S$ and diagonal $D$, \eqref{eq:Av} is --- by the transformation $w:=S^\tT v$ (note that $|w|=1$) --- equivalent to 
\begin{align*}
\sum_{i=1}^d \lambda_i^2+2\sum_{i=1}^d \lambda_i w_i^2+\frac{d-1}{d}\ge 0,
\end{align*}
where $\lambda_1,\ldots,\lambda_d$ are the (real) eigenvalues of $A$. Without restriction, assume that $\lambda_1$ is the smallest eigenvalue of $A$. Since $A$ is traceless, we have $\sum\limits_{i=1}^d\lambda_i=0$. Since $\sum\limits_{i=1}^d \lambda_i w_i^2\ge \lambda_1$, we obtain
\begin{align*}
\sum_{i=1}^d &\lambda_i^2+2\sum_{i=1}^d \lambda_i w_i^2+\frac{d-1}{d}\ge \sum_{i=2}^d \lambda_i^2+\left(\sum_{i=2}^d\lambda_i\right)^2-2\sum_{i=2}^d \lambda_i+\frac{d-1}{d}\\
&=\sum_{i=2}^d\left(\lambda_i-\frac1{d}\right)^2+\left[\sum_{i=2}^d\left(\lambda_i-\frac1{d}\right)\right]^2\ge 0. \qedhere
\end{align*}
\end{proof}


\bibliographystyle{abbrv}
   \bibliography{bib}

\begin{thebibliography}{10}

\bibitem{savare2008}
L.~Ambrosio, N.~Gigli, and G.~Savar{\'e}.
\newblock {\em Gradient flows in metric spaces and in the space of probability
  measures}.
\newblock Lectures in Mathematics ETH Z\"urich. Birkh\"auser Verlag, Basel,
  second edition, 2008.

\bibitem{bertozzi1996}
A.~L. Bertozzi and M.~Pugh.
\newblock The lubrication approximation for thin viscous films: regularity and
  long-time behavior of weak solutions.
\newblock {\em Comm. Pure Appl. Math.}, 49(2):85--123, 1996.

\bibitem{bertsch1998}
M.~Bertsch, R.~Dal~Passo, H.~Garcke, and G.~Gr{\"u}n.
\newblock The thin viscous flow equation in higher space dimensions.
\newblock {\em Adv. Differential Equations}, 3(3):417--440, 1998.

\bibitem{carrillo2012}
A.~Blanchet, E.~A. Carlen, and J.~A. Carrillo.
\newblock Functional inequalities, thick tails and asymptotics for the critical
  mass {P}atlak-{K}eller-{S}egel model.
\newblock {\em J. Funct. Anal.}, 262(5):2142--2230, 2012.

\bibitem{blanchet2014}
A.~Blanchet, J.~A. Carrillo, D.~Kinderlehrer, M.~Kowalczyk, P.~Lauren{\c{c}}ot,
  and S.~Lisini.
\newblock A hybrid variational principle for the {K}eller-{S}egel system in
  $\mathbb{R}^2$, 2014.
\newblock Preprint. arXiv:1407.5562.

\bibitem{bleher1994}
P.~M. Bleher, J.~L. Lebowitz, and E.~R. Speer.
\newblock Existence and positivity of solutions of a fourth-order nonlinear
  {PDE} describing interface fluctuations.
\newblock {\em Comm. Pure Appl. Math.}, 47(7):923--942, 1994.

\bibitem{carrillo2009}
J.~A. Carrillo and D.~Slep{\v{c}}ev.
\newblock Example of a displacement convex functional of first order.
\newblock {\em Calc. Var. Partial Differential Equations}, 36(4):547--564,
  2009.

\bibitem{carrillo2002}
J.~A. Carrillo and G.~Toscani.
\newblock Long-time asymptotics for strong solutions of the thin film equation.
\newblock {\em Comm. Math. Phys.}, 225(3):551--571, 2002.

\bibitem{dalpasso1998}
R.~Dal~Passo, H.~Garcke, and G.~Gr{\"u}n.
\newblock On a fourth-order degenerate parabolic equation: global entropy
  estimates, existence, and qualitative behavior of solutions.
\newblock {\em SIAM J. Math. Anal.}, 29(2):321--342 (electronic), 1998.

\bibitem{degiorgi1983}
E.~de~Giorgi.
\newblock New problems on minimizing movements.
\newblock {\em Notes Appl. Math.}, 2(4):81--89, 1983.

\bibitem{dlss1}
B.~Derrida, J.~L. Lebowitz, E.~R. Speer, and H.~Spohn.
\newblock Dynamics of an anchored {T}oom interface.
\newblock {\em J. Phys. A}, 24(20):4805--4834, 1991.

\bibitem{dlss2}
B.~Derrida, J.~L. Lebowitz, E.~R. Speer, and H.~Spohn.
\newblock Fluctuations of a stationary nonequilibrium interface.
\newblock {\em Phys. Rev. Lett.}, 67(2):165--168, 1991.

\bibitem{giacomelli2001}
L.~Giacomelli and F.~Otto.
\newblock Variational formulation for the lubrication approximation of the
  {H}ele-{S}haw flow.
\newblock {\em Calc. Var. Partial Differential Equations}, 13(3):377--403,
  2001.

\bibitem{gianazza2009}
U.~Gianazza, G.~Savar{\'e}, and G.~Toscani.
\newblock The {W}asserstein gradient flow of the {F}isher information and the
  quantum drift-diffusion equation.
\newblock {\em Arch. Ration. Mech. Anal.}, 194(1):133--220, 2009.

\bibitem{jko1998}
R.~Jordan, D.~Kinderlehrer, and F.~Otto.
\newblock The variational formulation of the {F}okker-{P}lanck equation.
\newblock {\em SIAM J. Math. Anal.}, 29(1):1--17, 1998.

\bibitem{juengel2008}
A.~J{\"u}ngel and D.~Matthes.
\newblock The {D}errida-{L}ebowitz-{S}peer-{S}pohn equation: existence,
  non-uniqueness, and decay rates of the solutions.
\newblock {\em SIAM J. Math. Anal}, 39(6):1996--2015, 2008.

\bibitem{juengel2000}
A.~J{\"u}ngel and R.~Pinnau.
\newblock Global nonnegative solutions of a nonlinear fourth-order parabolic
  equation for quantum systems.
\newblock {\em SIAM J. Math. Anal.}, 32(4):760--777 (electronic), 2000.

\bibitem{laurencot2011}
P.~Lauren{\c{c}}ot and B.-V. Matioc.
\newblock A gradient flow approach to a thin film approximation of the {M}uskat
  problem.
\newblock {\em Calc. Var. Partial Differential Equations}, 47(1-2):319--341,
  2013.

\bibitem{lisini2012}
S.~Lisini, D.~Matthes, and G.~Savar{\'e}.
\newblock Cahn-{H}illiard and thin film equations with nonlinear mobility as
  gradient flows in weighted-{W}asserstein metrics.
\newblock {\em J. Differential Equations}, 253(2):814--850, 2012.

\bibitem{matthes2009}
D.~Matthes, R.~J. McCann, and G.~Savar{\'e}.
\newblock A family of nonlinear fourth order equations of gradient flow type.
\newblock {\em Comm. Partial Differential Equations}, 34(10-12):1352--1397,
  2009.

\bibitem{mccann1997}
R.~J. McCann.
\newblock A convexity principle for interacting gases.
\newblock {\em Adv. Math.}, 128(1):153--179, 1997.

\bibitem{otto2001}
F.~Otto.
\newblock The geometry of dissipative evolution equations: the porous medium
  equation.
\newblock {\em Comm. Partial Differential Equations}, 26(1-2):101--174, 2001.

\bibitem{renardy2004}
M.~Renardy and R.~C. Rogers.
\newblock {\em An introduction to partial differential equations}, volume~13 of
  {\em Texts in Applied Mathematics}.
\newblock Springer-Verlag, New York, second edition, 2004.

\bibitem{rossi2003}
R.~Rossi and G.~Savar{\'e}.
\newblock Tightness, integral equicontinuity and compactness for evolution
  problems in {B}anach spaces.
\newblock {\em Ann. Sc. Norm. Super. Pisa Cl. Sci. (5)}, 2(2):395--431, 2003.

\bibitem{villani2003}
C.~Villani.
\newblock {\em Topics in optimal transportation}, volume~58 of {\em Graduate
  Studies in Mathematics}.
\newblock American Mathematical Society, Providence, RI, 2003.

\bibitem{zinsl2012}
J.~Zinsl.
\newblock Existence of solutions for a nonlinear system of parabolic equations
  with gradient flow structure.
\newblock {\em Monatsh. Math.}, 174(4):653--679, 2014.

\end{thebibliography}

\end{document}